\newcommand{\varstackrel}[3][T]{\stackrel{\raisebox{0.5ex}{\clap{\scriptsize#2}}}{#3}}
\newcommand{\tsum}{\textstyle{\sum}}
\newcommand{\tprod}{\textstyle{\prod}}
\newcommand{\beq}{\begin{equation}}
\newcommand{\eeq}{\end{equation}}
\newcommand{\nn}{\nonumber}
\newcommand{\bbe}{\mathbb{E}}
\newcommand{\bbr}{\mathbb{R}}
\def\eqnok#1{(\ref{#1})}
\def\argmin{{\rm argmin}}
\def\Argmin{{\rm Argmin}}
\def\vgap{\vspace*{.1in}}
\def\prob{\mathop{\rm Prob}}
\def\Prob{{\hbox{\rm Prob}}}
\def\Pr{{{\cal M}}}
\def\la{{\langle}}
\def\ra{{\rangle}}
\def\0b{{\bf 0}}
\def\1b{{\bf 1}}
\def\SO{{\rm SFO}}
\renewcommand{\fnum@RGEM}{\fname@RGEM}
\title{Random gradient extrapolation \\
for distributed and stochastic optimization}
\author{
	Guanghui Lan
	\thanks{H. Milton Stewart School of Industrial \& Systems Engineering, Georgia Institute of Technology, Atlanta, GA, 30332 .
		(email: {\tt george.lan@isye.gatech.edu}).}
	\and
	Yi Zhou
	\thanks{H. Milton Stewart School of Industrial \& Systems Engineering, Georgia Institute of Technology, Atlanta, GA, 30332 .
		(email: {\tt yizhou@gatech.edu}).}
}
\begin{document}
	
	\maketitle
	
	\begin{abstract}
		In this paper, we consider a class of finite-sum convex optimization problems defined over a distributed multiagent network with $m$ agents connected to a central server. In particular, the objective function consists of the average of $m$ ($\ge 1$) smooth components associated with each network agent together with a strongly convex term. 
		Our major contribution is to develop
		a new randomized incremental gradient algorithm, namely random gradient extrapolation method (RGEM), which does not require any exact gradient evaluation even for the initial point, but can achieve the optimal ${\cal O}(\log(1/\epsilon))$ complexity bound in terms of the total number of gradient evaluations of component functions to solve the finite-sum problems. 
		Furthermore, we demonstrate that for stochastic finite-sum optimization problems, RGEM maintains the optimal ${\cal O}(1/\epsilon)$ complexity (up to a certain logarithmic factor) in terms of the number of stochastic gradient computations, but attains an ${\cal O}(\log(1/\epsilon))$ complexity in terms of communication rounds (each round involves only one agent).
		It is worth noting that the former bound is independent of the number of agents $m$, while the latter one only linearly depends on $m$ or even $\sqrt m$ for ill-conditioned problems.
		 To the best of our knowledge, this is the first time that these complexity bounds have been obtained for distributed and stochastic optimization problems. 
		 Moreover, our algorithms were developed based on a novel dual perspective of Nesterov's accelerated gradient method.
		
		\vspace{.1in}
		
		\noindent {\bf Keywords:} finite-sum optimization, gradient extrapolation, randomized method, distributed machine learning, stochastic optimization.
		\vspace{.1in}
	\end{abstract}
\vspace{0.1cm}

\setcounter{equation}{0}
\section{Introduction}
The main problem of interest in this paper is the finite-sum convex programming (CP) problem given in the form of
\beq\label{cp}
\psi^*:=\min_{x \in X} \left\{\psi(x):=\tfrac{1}{m}\tsum_{i=1}^{m}f_i(x)+\mu w(x)\right\}.
\eeq
Here, $X\subseteq \bbr^n$ is a closed convex set,
$f_i:X\rightarrow\bbr,\ \ i=1,\dots,m,$ 
are smooth convex functions
with Lipschitz continuous gradients over $X$, i.e., $\exists L_i\ge 0$ such that 
\beq\label{def_smoothness}
\|\nabla f_i(x_1)-\nabla f_i(x_2)\|_*\le L_i \|x_1-x_2\|, \ \ \forall x_1,x_2 \in X,
\eeq
$w: X\rightarrow\bbr$
is a strongly convex function with modulus $1$ w.r.t. a norm $\|\cdot\|$, i.e.,
\beq\label{def_strongconvexity}
w(x_1)-w(x_2)-\langle w'(x_2),x_1-x_2\rangle\ge \tfrac{1}{2}\|x_1-x_2\|^2, \ \ \forall x_1,x_2\in X,
\eeq
where $w'(\cdot)$ denotes any subgradient (or gradient) of $w(\cdot)$ and $\mu\ge 0$ is a given constant. Hence, the objective function $\psi$ is strongly convex whenever $\mu >0$. 
For notational convenience, we also denote 
$f(x) \equiv \tfrac{1}{m}\tsum_{i=1}^m f_i(x)$, 
$L \equiv \tfrac{1}{m}\tsum_{i=1}^m L_i$, and $\hat L=\max_{i=1,\dots,m}{L_i}$. It is easy to see that for some $L_f \ge 0$,
\beq \label{smooth_f}
\|\nabla f(x_1) - \nabla f(x_2) \|_* \le L_f \|x_1 - x_2\| \le L \|x_1- x_2\|, \ \ \forall x_1, x_2 \in X.
\eeq
We also consider a class of stochastic finite-sum optimization problems given by
\beq\label{sp}
 \psi^*:=\min_{x \in X} \left\{\psi(x):=\tfrac{1}{m}\tsum_{i=1}^{m}\bbe_{\xi_i}[F_i(x,\xi_i)]+\mu w(x)\right\},
 \eeq
where $\xi_i$'s are random variables with support $\Xi_i\subseteq \bbr^d$. It can be easily seen that \eqref{sp} is a special case of \eqref{cp} with $f_i(x)= \bbe_{\xi_i}[F_i(x,\xi_i)], \ i=1,\ldots,m$. However, different from deterministic finite-sum optimization problems, only noisy gradient information of each component function $f_i$ can be accessed for the stochastic finite-sum optimization problem in \eqref{sp}. 

The deterministic finite-sum problem \eqref{cp} can model the empirical risk minimization in machine learning and statistical inferences, and hence has become the subject of intensive studies during the past few years.
Our study on finite-sum problems \eqref{cp} and \eqref{sp} has also been motivated by the emerging need for distributed optimization and machine learning. Under such settings, each component function $f_i$ is associated with an agent $i$, $i=1,\ldots,m$, which are connected through a distributed network. 
While different topologies can be considered for distributed optimization (see, e.g., Figure~\ref{example} and~\ref{dencentralized}),
in this paper, we focus on the star network where $m$ agents are connected to one central server, and all agents only communicate with the server (see Figure~\ref{example}).
These types of distributed optimization problems have several unique features. 
Firstly, they allow for data privacy, since no local data is stored in the server.  
Secondly, network agents behave independently and they may not be responsive at the same time.
Thirdly, the communication between the server and agent can be expensive and has high latency.
Finally, by considering the stochastic finite-sum optimization problem, we are interested in not only the deterministic empirical risk minimization, but also the generalization risk for distributed machine learning. Moreover, we allow the private data for each agent to be collected in an online (steaming) fashion.
One typical example of the aforementioned distributed problems is {\sl Federated Learning} recently introduced by Google in \cite{mcmahan2016communication}. 
As a particular example, in the $\ell_2$-regularized logistic regression problem, we have 
\[
f_i(x)=l_i(x):=\tfrac{1}{N_i}\tsum_{j=1}^{N_i}\log(1+{\rm exp}(-b_j^i{a_j^{i}}^Tx)), \ i=1,\ldots,m, \ \ w(x)= R(x):=\tfrac{1}{2}\|x\|^2_2,
\]
provided that $f_i$ is the loss function of agent $i$ with training data $\{a_j^i,b_j^i\}_{j=1}^{N_i}\in \bbr^n\times\{-1,1\}$, and $\mu:=\lambda$ is the penalty parameter. For minimization of the generalized risk, $f_i$'s are given in the form of expectation, i.e., 
\[
	f_i(x)=l_i(x):=\bbe_{\xi_i}[\log(1+{\rm exp}(-\xi_i^Tx))], \ i=1,\ldots,m,
\]
where the random variable $\xi_i$ models the underlying distribution for training dataset of agent $i$.
\begin{figure}[!htb]
   \begin{minipage}{0.48\textwidth}
     \centering
\footnotesize
\includegraphics[scale=0.5]{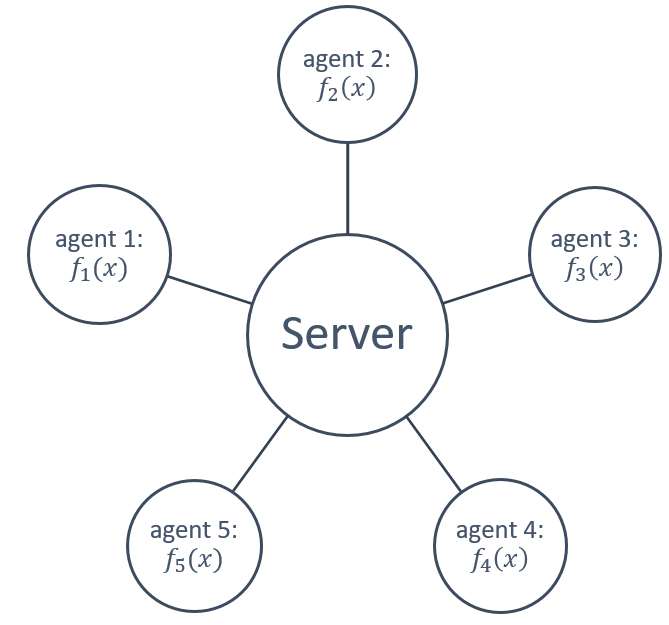}
\caption{A distributed network with $5$ agents and one server}
\label{example}
\end{minipage}\hfill
\begin {minipage}{0.48\textwidth}
     \centering
\footnotesize
\includegraphics[scale=0.62]{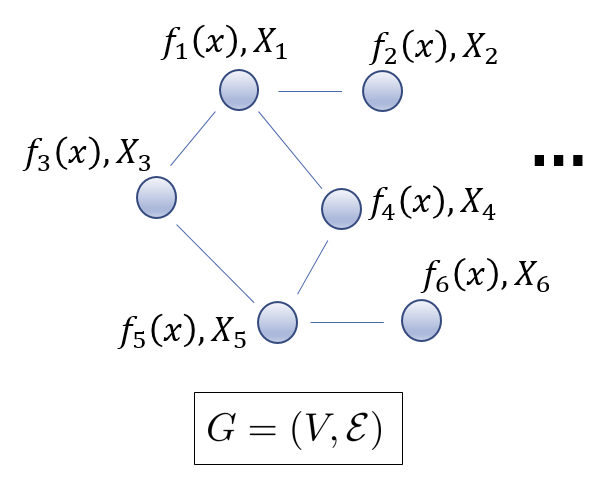}
\caption{An example of the decentralized network}
\label{dencentralized}
\end{minipage}
\end{figure}
Note that another type of topology for distributed optimization is the multi-agent network without a central server, namely the decentralized setting, as shown in Figure~\ref{dencentralized},
where the agents can only communicate with their neighbors to  update information,
please refer to \cite{lan2017communication,shi2015extra,lee2017stochastic} and reference therein for decentralized algorithms.


During the past few years, 
randomized incremental gradient (RIG) methods have emerged as an important class of first-order methods for finite-sum optimization (e.g.,\cite{BlHeGa07-1,JohnsonZhang13-1,xiao2014proximal,DefBacLac14-1,SchRouBac13-1,lan2015optimal,allen2016katyusha,hazan2016variance,LinMaiHar15-1}).
For solving nonsmooth finite-sum problems, Nemirovski et al. \cite{NJLS09-1,NYu78} showed that stochastic subgradient (mirror) descent methods can possibly save up to ${\cal O}(\sqrt m)$ subgradient evaluations. By utilizing the smoothness properties of the objective, Lan~\cite{Lan08} showed that one can separate the impact of variance from other deterministic components for stochastic gradient descent and  presented a new class of accelerated stochastic gradient descent methods to further improve these complexity bounds.
However, the overall rate of convergence of these stochastic methods  is still sublinear even for smooth and strongly finite-sum problems (see \cite{GhaLan12-2a,GhaLan13-1}). 
Inspired by these works and the success of the incremental aggregated gradient method by Blatt et al.\cite{BlHeGa07-1},  
Schimidt et al. \cite{SchRouBac13-1} presented a stochastic average gradient (SAG) method, which uses randomized sampling of $f_i$ to update the gradients, 
and can achieve a linear rate of convergence, i.e., an ${\cal O}\left\{m + (mL/\mu)\log(1/\epsilon)\right\}$ complexity bound, to solve unconstrained finite-sum problems \eqref{cp}.
Johnson and Zhang later in \cite{JohnsonZhang13-1} presented a stochastic variance reduced gradient (SVRG) method, which computes an 
estimator of $\nabla f$ by iteratively updating the gradient of one randomly selected $f_i$ of the current exact gradient information 
and re-evaluating the exact gradient from time to time. Xiao and Zhang \cite{xiao2014proximal} later extended SVRG to solve proximal finite-sum problems \eqref{cp}. 
All these methods exhibit an improved ${\cal O}\left\{\left(m+ L/\mu\right)\log(1/\epsilon)\right\}$ complexity bound, and
Defazio et al. \cite{DefBacLac14-1} also presented
an improved SAG method, called SAGA, that can achieve such a complexity result. 
Comparing to the class of stochastic dual methods (e.g., \cite{ShaZhang15-1,ShaZhang13-1,Yuchen14}), each iteration of the RIG methods 
only involves the computation $\nabla f_i$, rather than solving a more complicated subproblem 
\[
\argmin\{\la g,y\ra +f_i^*(y) +\|y\|_*^2\},
\]
which may not have explicit solutions \cite{ShaZhang15-1}.

Noting that most of these RIG methods are not optimal even for $m=1$,
much recent research effort has been directed to the acceleration of RIG methods. 
In 2015, Lan and Zhou in \cite{lan2015optimal} proposed a RIG method, namely randomized primal-dual gradient (RPDG) method, 
and show that its total number of gradient computations of $f_i$ can be bounded by
\beq\label{opt_complexity}
{\cal O}\left\{\left(m+\sqrt{\tfrac{mL}{\mu}}\right)\log\tfrac{1}{\epsilon}\right\}.
\eeq
The RPDG method utilizes a direct acceleration without even using the concept of variance reduction, evolving from 
the randomized primal-dual methods developed in \cite{Yuchen14,DangLan14-1} for solving saddle-point problems.
Lan and Zhou~\cite{lan2015optimal} also established a lower complexity bound for the RIG methods by showing that the number of gradient evaluations of $f_i$ required by any RIG methods
 to find an $\epsilon$-solution of \eqnok{cp}, i.e., a point $\bar x \in X$ s.t. 
$\bbe[\|\bar x - x^*\|^2_2] \le \epsilon$, cannot be smaller than 
\beq \label{RIG_lb}
{\Omega} \left( \left(m + \sqrt{\tfrac{m L}{\mu}}\right) \log \tfrac{1}{\epsilon}\right),
\eeq
whenever the dimension 
\[
n\ge (k+m/2)/\log(1/q),
\] 
where $k$ is the total number of iterations and $q = 1 + 2/(\sqrt{L/((m+1)\mu)}-1)$. 
Simultaneously, Lin et al.~\cite{LinMaiHar15-1} presented a catalyst scheme which utilizes a restarting technique to
accelerate the SAG method in \cite{SchRouBac13-1} (or other ``non-accelerated'' first-order methods) and
thus can possibly improve the complexity bounds obtained by SVRG and SAGA to 
\eqref{opt_complexity}
(under the Euclidean setting). 
Allen-Zhu \cite{allen2016katyusha} later showed that one can also directly accelerate SVRG to achieve the optimal rate of convergence \eqref{opt_complexity}.
All these accelerated RIG methods can save up to ${\cal O}(\sqrt m)$ in the number of gradient evaluations of $f_i$ comparing to optimal deterministic first-order methods when $L/\mu\ge m$. 

It should be noted that most existing RIG methods were inspired by empirical risk minimization on a single server (or cluster) in machine learning rather than on a set of agents distributed over a network. Under the distributed setting, methods requiring full gradient computation and/or restarting from time to time may incur extra communication and synchronization costs. As a consequence, methods which require fewer full gradient computations (e.g. SAG, SAGA and RPDG) seem to be more advantageous in this regard. An interesting but yet unresolved question in stochastic optimization is whether there exists a method which does not require the computation of any full gradients (even at the initial point), but can still achieve the optimal rate of convergence in \eqref{opt_complexity}. 
Moreover, little attention in the study of RIG methods has been paid to the stochastic finite-sum problem in \eqref{sp}, which is important for generalization risk minimization in machine learning. Very recently, there are some progresses on stochastic primal-dual type methods for solving problem \eqref{sp}. For example, Lan, Lee and Zhou \cite{lan2017communication} proposed 
a stochastic decentralized communication sliding method that can achieve
the optimal sampling complexity of ${\cal O}(1/\epsilon)$ and best-known ${\cal O}(1/\sqrt\epsilon)$ complexity bounds for communication rounds for solving stochastic decentralized strongly convex problems.
For the distributed setting with a central sever, by using mini-batch technique to collect gradient information and any stochastic gradient based algorithm as a black box to update iterates, 
Dekel et al. \cite{dekel2012optimal} presented a distributed mini-batch algorithm with a batch size of ${o}(m^{1/2})$ that can obtain ${\cal O}(1/\epsilon)$ sampling complexity (i.e., number of stochastic gradients) for stochastic strongly convex problems, and hence implies at least ${\cal O}(1/\sqrt \epsilon)$ bound for communication complexity.
An asynchronous version was later proposed by Feyzmahdavian et al. in \cite{feyzmahdavian2016asynchronous} that maintained the above convergence rate for regularized stochastic strongly convex problems. 
It should be pointed out that these mini-batch based distributed algorithms require sampling from all network agents iteratively and hence leads to at least ${\cal O}(m/\sqrt \epsilon)$ rate of convergence in terms of communication costs among server and agents.
It is unknown whether there exists an algorithm which only requires a significantly smaller communication rounds (e.g. ${\cal O}(\log 1/\epsilon)$), but can achieve the optimal ${\cal O}(1/\epsilon)$ sampling complexity for solving the stochastic finite-sum problem in \eqref{sp}.



The main contribution of this paper is to introduce a new randomized incremental gradient type method
to solve \eqref{cp} and \eqref{sp}.  
Firstly, we develop a random gradient extrapolation method (RGEM) for solving \eqref{cp} that does not require any exact gradient evaluations of $f$. 
For strongly convex problems, we demonstrate that RGEM can still achieve the optimal rate of convergence \eqref{opt_complexity} under the assumption that the average of gradients of $f_i$ at the initial point $x^0$ is bounded by $\sigma_0^2$. 
To the best of our knowledge, this is the first time that such an optimal RIG methods without any exact gradient evaluations 
has been presented for solving \eqref{cp} in the literature.
In fact, without any full gradient computation, RGEM possesses iteration costs as low as pure stochastic gradient descent (SGD) methods, 
but achieves a much faster and optimal linear rate of convergence for solving deterministic finite-sum problems. 
In comparison with the well-known randomized Kaczmarz method \cite{strohmer2009randomized}, which can be
viewed as an enhanced version of SGD, but can achieve a linear rate of convergence for solving linear systems, RGEM has a better convergence 
rate in terms of the dependence on the condition number $L/\mu$.
Secondly, we develop a stochastic version of RGEM and establish its optimal convergence properties for solving stochastic finite-sum problems \eqref{sp}. 
More specifically, we assume that only noisy first-order information of one randomly selected component function $f_i$ can be accessed via a stochastic first-order ($\SO$) oracle iteratively. 
In other words, at each iteration only one randomly selected network agent needs to compute an estimator of its gradient by sampling from its local data using a $\SO$ oracle instead of performing exact gradient evaluation of its component function $f_i$.
Note that for these problems, it is difficult to compute the exact gradients even at the initial point.
Under standard assumptions for centralized stochastic optimization, i.e., the gradient estimators computed by the $\SO$ oracle are unbiased and have bounded variance $\sigma^2$, the number of stochastic gradient evaluations performed by RGEM to solve \eqref{sp} can be bounded by\footnote{$\cal \tilde O$ indicates the rate of convergence is up to a logarithmic factor - $\log(1/\epsilon)$.}
\beq\label{complexity_sfo}
{\cal \tilde O}
\left\{\tfrac{\sigma_0^2/m+\sigma^2}{\mu^2\epsilon}+\tfrac{\mu \|x^0-x^*\|^2_2 +\psi(x^0)-\psi^* }{\mu\epsilon}\right\},
\eeq
for finding a point $\bar x \in X$ s.t. $\bbe[\|\bar x-x^*\|_2^2] \le \epsilon$.
Moreover, by utilizing the mini-batch technique, RGEM can achieve an
\beq\label{complexity_sfocom}
	{\cal O} \left\{\left(m +\sqrt{\tfrac{m\hat L}{\mu}}\right)\log \tfrac{1}{\epsilon}\right\},
\eeq
complexity bound in terms of the number of  communication rounds, and each round only involves the communication between the server and a randomly selected agent. 
This bound seems to be optimal, since it matches the lower complexity bound for RIG methods to solve deterministic finite-sum problems.
It is worth noting that the former bound \eqref{complexity_sfo} is independent of the number of agents $m$, while the latter one \eqref{complexity_sfocom} only linearly depends on $m$ or even $\sqrt m$ for ill-conditioned problems.
To the best of our knowledge, this is the first time that such a RIG type method has been developed for solving stochastic finite-sum problems \eqref{sp}
that can achieve the optimal communication complexity and nearly optimal (up to a logarithmic factor) sampling complexity in the literature. 

RGEM is developed based on a novel algorithmic framework, namely gradient extrapolation method (GEM), that we introduce in this paper for solving black-box convex optimization (i.e., $m=1$).
The development of GEM was inspired by our recent studies on the relation between accelerated gradient methods and the primal-dual gradient methods. 
In particular, it is observed in \cite{lan2015optimal} that Nesterov's accelerated gradient method is a special primal-dual gradient (PDG) method where the extrapolation step is performed in the primal space. Such a primal extrapolation step, however, might result in a search point outside the feasible region under the randomized setting in the RPDG method mentioned above. In view of this deficiency of PDG and RPDG methods, we propose to switch the primal and dual spaces for primal-dual gradient methods, and to perform the extrapolation step in the dual (gradient) space. The resulting new first-order method, i.e., GEM, can be viewed as a dual version of Nesterov's accelerated gradient method, and we show that it can also achieve the optimal rate of convergence for black-box convex optimization.

RGEM is a randomized version of GEM which only computes the gradient of a randomly selected component function $f_i$ at each iteration. It utilizes the 
gradient extrapolation step also for estimating
exact gradients in addition to predicting dual information as in GEM. 
As a result, it has several advantages over RPDG.
Firstly, RPDG requires a restricted assumption that each $f_i$ has to be differentiable and has Lipschitz continuous gradients over the whole $\bbr^n$ due to its primal extrapolation step.
RGEM relaxes this assumption to having Lipschitz gradients over the feasible set $X$ (see \eqref{def_smoothness}), and hence can be applied to a much broader class of problems. 
Secondly, RGEM possesses simpler convergence analysis carried out in the primal space due to its simplified algorithmic scheme. However, RPDG has a complicated algorithmic scheme, which contains a primal extrapolation step and a gradient (dual) prediction step in addition to solving a primal proximal subproblem, and thus leads to an intricate primal-dual convergence analysis. 
Last but not least, it is unknown whether RPDG could maintain the optimal convergence rate \eqref{opt_complexity} without the exact gradient evaluation of $f$ during initialization. 


This paper is organized as follows. In Section~\ref{sec_results} we present the proposed random gradient extrapolation methods (RGEM), and their convergence properties for solving \eqref{cp} and \eqref{sp}.
In order to provide more insights into the design of the algorithmic scheme of RGEM, we provide an introduction to the gradient extrapolation method (GEM) and its relation to the primal-dual gradient method, as well as Nesterov's method in Section~\ref{sec_deter}. Section~\ref{sec_conv} is devoted to the convergence analysis of RGEM. Some concluding remarks are made in Section 5.

\subsection{Notation and terminology}
We use $\|\cdot\|$ to denote a general norm in $\bbr^n$ without specific mention.
We also use $\|\cdot\|_*$ to denote the conjugate norm of $\|\cdot\|$. 
For any $p\ge 1$, $\|\cdot\|_p$ denotes the standard $p$-norm in $\bbr^n$, i.e.,
$
\|x\|^p_p=\tsum_{i=1}^{n}|x_i|^p, \  \mbox{for any } x\in \bbr^n.
$
For any convex function $h$, $\partial h(x)$ is the set of subdifferential at $x$. 
For a given strongly convex function $w$ with modulus $1$ (see \eqref{cp}), we define a {\sl prox-function} associated with $w$ as
\beq\label{primal_prox}
P(x^0,x)\equiv P_{w}(x^0,x):=w(x)-\left[w(x^0)+\langle w'(x^0),x-x^0\rangle\right],
\eeq
where $w'(x^0)\in \partial w(x^0)$ is an arbitrary subgradient of $w$ at $x^0$. By the strong convexity of $w$,
we have 
\beq\label{P_strong}
P(x^0,x)\ge \frac{1}{2}\|x-x^0\|^2, \ \ \forall x,x^0\in X.
\eeq
It should be pointed out that the prox-function $P(\cdot,\cdot)$ described above is a generalized Bregman distance in the sense that $w$ is not necessarily differentiable. 
This is different from the standard definition for Bregman distance~\cite{Breg67,AuTe06-1,BBC03-1,Kiw97-1,censor1981iterative}. 
Throughout this paper, we assume that the prox-mapping associated with $X$ and $w$, given by
\beq\label{prox_mapping}
\mathcal{M}_{X}(g,x^0,\eta):=\argmin_{x\in X}\left\{\langle g,x\rangle+ \mu w(x)+\eta P(x^0,x) \right\},
\eeq
is easily computable for any $x^0\in X, g\in \bbr^{n}, \mu\ge 0, \eta>0$. 
For any real number $r$, $\lceil r \rceil$ and $\lfloor r \rfloor$ denote the nearest integer to
$r$ from above and below, respectively. $\bbr_+$ and $\bbr_{++}$, respectively, denote the set of nonnegative 
and positive real numbers.


\section{Algorithms and main results}\label{sec_results}
This section contains three subsections.
We first present in Subsection~\ref{sec_alg} an optimal random gradient extrapolation method (RGEM) for solving the distributed finite-sum problem in \eqref{cp}, and then discuss in Subsection~\ref{sec_online}, a stochastic version of RGEM for solving the stochastic finite-sum problem in \eqref{sp}.
Subsection~\ref{sec_distributed} is devoted to the implementation of RGEM in a distributed setting and the discussion about its communication complexity.

\subsection{RGEM for deterministic finite-sum optimization}\label{sec_alg}
The basic scheme of RGEM is formally stated in Algorithm~\ref{alg_rpaged}. 
This algorithm 
simply initializes the gradient as $y^{-1}= y^{0}= \0b$.
At each iteration,
RGEM requires the new gradient information of only one randomly selected component function $f_i$, but maintains $m$ pairs of search points and gradients 
$(\underline{x}_i^t, y_i^t)$, $i = 1, \ldots, m$, which are stored by their corresponding agents in the distributed network. 
More specifically, it first performs a gradient extrapolation step in \eqref{def_tildeyt} and the primal proximal mapping in \eqref{def_xt1}. Then 
a randomly selected block $\underline{x}_{i_t}^t$ is updated in \eqref{def_underlx} and  
the corresponding component gradient $\nabla f_{i_t}$ is computed in \eqref{def_yt1}. 
As can be seen from Algorithm~\ref{alg_rpaged}, 
RGEM does not require any exact gradient evaluations.

\begin{algorithm}
	\caption{A random gradient extrapolation method (RGEM)}
	\label{alg_rpaged}
	\begin{algorithmic}
		\State {\bf Input:}
		Let $x^0\in X$, and the nonnegative parameters $\{\alpha_t\}$, $\{\eta_t\}$, and $\{\tau_t\}$ be given.
		\State{\bf Initialization:}
		\State
		Set $\underline{x}_{i}^0=x^0, \ i=1,\ldots,m$,
		$y^{-1}=y^{0}=\0b$.
		\Comment{No exact gradient evaluation for initialization} 
		\State
		{\bf for} $t=1,\dots,k$ {\bf do}
		
		Choose $i_t$ according to $\Prob\{i_t=i\}=\tfrac{1}{m}, i=1,\dots,m$.
		
		Update $z^t=(x^t,y^t)$ according to 
		\begin{align}
		\tilde{y}^t&=y^{t-1} + \alpha_t(y^{t-1}-y^{t-2}),\label{def_tildeyt}\\
		x^t&=\mathcal{M}_{X}(\tfrac{1}{m}\tsum_{i=1}^m\tilde y^t_i,x^{t-1},\eta_t),\label{def_xt1}\\
		\underline{x}_{i}^t&=
		\begin{cases}
		(1+\tau_t)^{-1}(x^t+\tau_t\underline{x}^{t-1}_{i}), \ \ i=i_t,\\
		\underline{x}_{i}^{t-1}, \ \ i\ne i_t.
		\end{cases}\label{def_underlx}\\
		y_i^t&=
		\begin{cases}
		\nabla f_i(\underline{x}_{i}^t), \ \ i=i_t,\\
		y_i^{t-1}, \ \ i\ne i_t.
		\end{cases}\label{def_yt1}
		\end{align}  
		\State {\bf end for}
		\State {\bf Output:} For some $\theta_t>0$, $t=1,\ldots,k$, set
		\beq\label{def_ergodic_m}
		\underline x^k:=(\tsum_{t=1}^k\theta_t)^{-1}\tsum_{t=1}^k\theta_t x^t.
\eeq
	\end{algorithmic}
\end{algorithm}

Note that the computation of $x^t$ in \eqref{def_xt1} requires an involved computation of $\tfrac{1}{m}\tsum_{i=1}^m \tilde y_i^t$. In order to save computational time when implementing this algorithm,
we suggest to compute this quantity in a recursive manner as follows. Let us denote $g^t \equiv \tfrac{1}{m} \tsum_{i=1}^m y_i^t, \ t = 1,\ldots,k$.
Clearly, in view of the fact that $y^t_i = y^{t-1}_i$, $\forall i \neq i_t$, we have
\beq\label{compute_gt}
g^t = g^{t-1} + \tfrac{1}{m}(y^t_{i_t} - y^{t-1}_{i_t}).
\eeq
Also, by the definition of $g^t$ and \eqref{def_tildeyt}, we have
\begin{align}\label{compute_tildeyt}
\tfrac{1}{m}\tsum_{i=1}^m \tilde y^t_i 
&= \tfrac{1}{m}\tsum_{i=1}^m y_i^{t-1} + \tfrac{\alpha_t}{m} (y^{t-1}_{i_{t-1}} - y^{t-2}_{i_{t-1}})  
=g^{t-1} + \tfrac{\alpha_t}{m}(y^{t-1}_{i_{t-1}} - y^{t-2}_{i_{t-1}}).
\end{align}
Using these two ideas mentioned above, we can compute $\tfrac{1}{m}\tsum_{i=1}^m \tilde y_i^t$ in two steps:  
i) initialize $g^0 = \0b$,
and update $g^t$ as in \eqref{compute_gt} after the gradient evaluation step \eqref{def_yt1};
ii) replace \eqref{def_tildeyt} by \eqref{compute_tildeyt} to compute $\tfrac{1}{m}\tsum_{i=1}^m \tilde y_i^t$.
Also note that the difference  $y^t_{i_t} - y^{t-1}_{i_t}$ can be saved as it is used in both \eqnok{compute_gt}
and \eqnok{compute_tildeyt} for the next iteration. 
These enhancements will be incorporated into the distributed setting in Subsection~\ref{sec_distributed} to possibly save communication costs.

It is also interesting to observe the differences between RGEM and RPDG \cite{lan2015optimal}. 
RGEM has only one extrapolation step \eqref{def_tildeyt} which combines two types of predictions. 
One is to predict future gradients using historic data, and the other is to obtain an estimator of the current exact gradient of $f$ from the randomly updated gradient information of $f_i$.
However, RPDG method needs two extrapolation steps in both the primal and dual spaces. 
Due to the existence of the primal extrapolation step, RPDG cannot guarantee the search points where it performs gradient evaluations to fall within the feasible set $X$. Hence,
it requires the assumption that $f_i$'s are differentiable with Lipschitz continuous gradients over $\bbr^n$. Such a strong assumption is
not required by RGEM, since all the primal iterates generated by RGEM stay within the feasible region $X$. As a result, RGEM can deal with a much wider class of problems than RPDG. 
Moreover, RGEM allows no exact gradient computation for initialization, which provides a fully-distributed algorithmic framework under the assumption that there exists $\sigma_0\ge 0$ such that 
\beq\label{def_sigma}
\tfrac{1}{m}\tsum_{i=1}^m\|\nabla f_i(x^0)\|_*^2\le \sigma_0^2,
\eeq
where $x^0$ is the given initial point.  

We now provide a constant step-size policy for RGEM to solve strongly convex problems given in the form of \eqref{cp} and show that the resulting algorithm exhibits an optimal linear rate of convergence in Theorem~\ref{main_ran_sc}.
The proof of Theorem~\ref{main_ran_sc}
can be found in Subsection~\ref{sec_rws}.

\vgap
\begin{theorem}\label{main_ran_sc}
	Let $x^*$ be an optimal solution of \eqnok{cp}, $x^k$ and $\underline x^k$ be defined in \eqref{def_xt1} and \eqnok{def_ergodic_m}, respectively, and $\hat L=\max_{i=1,\dots,m}{L_i}$.
	Also let $\{\tau_t\}$, $\{\eta_t\}$ and $\{\alpha_t\}$ be set to 
	\begin{align} \label{constant_step_s_ran}
	\tau_t \equiv \tau=\tfrac{ 1}{m(1-\alpha)}-1, \ \ \ \eta_t \equiv \eta= \tfrac{\alpha}{1-\alpha}\mu, \ \ \ \mbox{and} \ \ \ \alpha_t \equiv m\alpha.
	\end{align} 
	If \eqref{def_sigma} holds and $\alpha$ is set as 
		\beq\label{def_nalpha}
		\alpha=1-\tfrac{1}{m+\sqrt{m^2+16m\hat L/\mu}},
		\eeq
		then 
		\begin{align}
		\bbe[P(x^k,x^*)]&\le \tfrac{2\Delta_{0,\sigma_0}\alpha^k}{\mu},\label{ran_bnd_s1_rws}\\
		\bbe[\psi(\underline{x}^k)-\psi(x^*)]&\le 6\max\left\{m,\tfrac{\hat L}{\mu}\right\}\Delta_{0,\sigma_0}\alpha^{k/2},\label{ran_bnd_s2_rws}
		\end{align}
		where 
		\beq\label{def_DeltaS}
		\Delta_{0,\sigma_0}:=  \mu P(x^0,x^*)+\psi(x^0)-\psi^* +\tfrac{\sigma_0^2}{m\mu}.
		\eeq
\end{theorem}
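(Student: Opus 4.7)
The plan is to set up a Lyapunov-function argument and deduce linear convergence. I will (i) derive a one-step inequality from the prox-step, (ii) account for the randomness of $i_t$ via conditional expectation, (iii) package the pieces into a contracting potential, and (iv) convert the geometric decay into the two stated bounds.

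First, by the first-order optimality condition for the subproblem \eqref{def_xt1}, together with the $(\mu+\eta_t)$-strong convexity of $\mu w(\cdot)+\eta_t P(x^{t-1},\cdot)$, one obtains the standard three-point inequality
\[
\langle \tfrac{1}{m}\tsum_{i=1}^m \tilde y_i^t, x^t - x\rangle + \mu[w(x^t)-w(x)] + (\mu+\eta_t)P(x^t,x) \le \eta_t P(x^{t-1},x) - \eta_t P(x^{t-1},x^t),
\]
valid for every $x\in X$ and to be applied at $x=x^*$. I then split $\tilde y_i^t = y_i^t + (\tilde y_i^t - y_i^t)$. Convexity of each $f_i$, combined with the convex-combination update \eqref{def_underlx}, yields an upper bound on $\langle \tfrac{1}{m}\tsum_i y_i^t, x^t - x^*\rangle$ expressed through $\psi(x^t)-\psi^*$ and the per-block Bregman residuals $D_i(\underline x_i^t, x^*) := f_i(\underline x_i^t)-f_i(x^*)-\langle \nabla f_i(x^*), \underline x_i^t - x^*\rangle$. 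The prediction-error piece $\tilde y_i^t - y_i^t = -(y_i^t - y_i^{t-1}) + \alpha_t(y_i^{t-1} - y_i^{t-2})$ is treated by summation by parts: shifting the index on the $\alpha_t$-term telescopes two consecutive finite differences, and the leftover cross term $\alpha_t\langle y^{t-1}-y^{t-2}, x^t - x^{t-1}\rangle$, supported on the single block $i_{t-1}$, is bounded via Cauchy--Schwarz, the Lipschitz estimate \eqref{def_smoothness}, and Young's inequality, absorbing a $\|x^t-x^{t-1}\|^2$ piece into the $-\eta_t P(x^{t-1},x^t)$ term from the three-point inequality.

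Taking the conditional expectation over $i_t$ uses $\Prob\{i_t=i\}=1/m$ and the fact that $\underline x_i^t$ and $y_i^t$ evolve only when $i=i_t$: any per-block quantity $A_i^t$ produced by the update satisfies $\bbe_{i_t}[A_i^t]=\tfrac{1}{m}A_i^{\text{updated}}+(1-\tfrac{1}{m})A_i^{t-1}$, so the aggregate lag $\tfrac{1}{m}\tsum_i D_i(\underline x_i^t,x^*)$ contracts under expectation at rate $1-1/m$ per iteration; this is precisely what forces the $m$-dependence in \eqref{def_nalpha}. Packaging the expected inequality, I define a Lyapunov function
\[
\Phi^t := \bbe\bigl[(\mu+\eta)P(x^t,x^*) + \rho\,\tfrac{1}{m}\tsum_{i=1}^m D_i(\underline x_i^t,x^*) + \gamma(\psi(x^t)-\psi^*)\bigr]
\]
with nonnegative constants $\rho,\gamma$ matched to the step-size rule \eqref{constant_step_s_ran}; this rule is engineered exactly so that $\Phi^t\le\alpha\,\Phi^{t-1}$. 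Iterating gives $\Phi^k\le \alpha^k \Phi^0$; nonnegativity of every summand then yields $\mu\,\bbe[P(x^k,x^*)]\le 2\,\Phi^k$, and the initialization $\underline x_i^0 = x^0$, $y^{-1}=y^0=\0b$ combined with assumption \eqref{def_sigma} (and smoothness to pass from $\nabla f_i(x^0)$ to $\nabla f_i(x^*)$) bounds $\Phi^0\le 2\Delta_{0,\sigma_0}$, completing the proof of \eqref{ran_bnd_s1_rws}.

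For the function-value bound \eqref{ran_bnd_s2_rws}, I choose the weights $\theta_t\propto \alpha^{-t/2}$ in \eqref{def_ergodic_m}. Convexity of $\psi$ and Jensen's inequality give $\psi(\underline x^k)-\psi^* \le (\tsum_t \theta_t)^{-1}\tsum_t \theta_t[\psi(x^t)-\psi^*]$, and the per-iteration estimate extracted from the Lyapunov argument bounds $\bbe[\psi(x^t)-\psi^*]$ by a multiple of $\max\{m,\hat L/\mu\}\,\Delta_{0,\sigma_0}\,\alpha^t$, the prefactor $\max\{m,\hat L/\mu\}$ being the price of converting a primal-distance contraction into a function-value estimate via \eqref{smooth_f}. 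Summing the resulting geometric series with the chosen weights produces the advertised $\alpha^{k/2}$ rate together with the constant $6\max\{m,\hat L/\mu\}$. The principal obstacle is the Lyapunov design and the verification $\Phi^t\le\alpha\Phi^{t-1}$: the prox-step alone contracts $P(x^t,x^*)$ at rate $\eta/(\mu+\eta)=\alpha$, sampling alone contracts the per-block lag at rate $1-1/m$, and both contractions must be simultaneously compatible with the $\hat L/\mu$-scale smoothness penalty produced by the extrapolation error; the algebra of this balance pins down the quadratic equation in $1-\alpha$ whose solution is precisely \eqref{def_nalpha}.
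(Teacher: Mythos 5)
Your overall skeleton (three-point inequality for the prox-step, conditional expectation over $i_t$ via the $1/m$--$(1-1/m)$ splitting, geometric weights, and a balance condition pinning down $\alpha$) matches the paper's strategy, but two of your key steps do not go through as stated. First, the one-step contraction $\Phi^t\le\alpha\,\Phi^{t-1}$ cannot hold for the potential you define: the extrapolation cross-term at iteration $t$ is $\tfrac{\alpha_t}{m}\langle x^t-x^{t-1},\,y^{t-1}_{i_{t-1}}-y^{t-2}_{i_{t-1}}\rangle$, and after Young's inequality it leaves a residual proportional to $\|y^{t-1}_{i_{t-1}}-\nabla f_{i_{t-1}}(\underline x_{i_{t-1}}^{t-2})\|_*^2$, which is \emph{not} controlled by any of the three terms in your $\Phi^{t-1}$. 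The paper avoids this by summing the per-iteration inequalities with weights $\theta_t=\alpha^{-t}$ and pairing the cross term at step $t$ with the negative smoothness term generated at step $t-1$; a genuine Lyapunov argument would need an extra component tracking $\tfrac{1}{m}\tsum_i\|\nabla f_i(\underline x_i^t)-y_i^t\|_*^2$. For the same reason, the $\sigma_0^2/(m\mu)$ piece of $\Delta_{0,\sigma_0}$ cannot be charged to $\Phi^0$: none of $P(x^0,x^*)$, $D_i(x^0,x^*)$, or $\psi(x^0)-\psi^*$ sees the raw gradient norms $\|\nabla f_i(x^0)\|_*$ that arise from $y^0=\0b$. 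In the paper this error enters at \emph{every} iteration $t$ through the event that block $i_t$ has never been updated, contributing $\tsum_{t}(\tfrac{m-1}{m})^{t-1}\tfrac{2\theta_t\alpha_{t+1}}{m\eta_{t+1}}\sigma_0^2$, and the sum is only finite because $\alpha\ge\tfrac{2m-1}{2m}>\tfrac{m-1}{m}$ --- a verification your sketch never performs.

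Second, your route to \eqref{ran_bnd_s2_rws} relies on a per-iterate bound $\bbe[\psi(x^t)-\psi^*]\lesssim\max\{m,\hat L/\mu\}\Delta_{0,\sigma_0}\,\alpha^t$, which is not available: the analysis never controls $f(x^t)$ (only $f_i$ evaluated at the agent-specific points $\underline x_i^t$), and since $X$ may be constrained and $w$ nonsmooth, $\psi(x^t)-\psi^*$ is not $O(\|x^t-x^*\|^2)$, so it cannot be extracted from the distance contraction via \eqref{smooth_f}. If such a last-iterate bound did hold, taking $t=k$ would already give an $\alpha^k$ rate for function values, strictly stronger than the theorem's $\alpha^{k/2}$, which should have been a warning sign. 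The paper instead introduces the linearized gap $Q(\underline x,x^*)=\langle\nabla f(x^*),\underline x-x^*\rangle+\mu w(\underline x)-\mu w(x^*)$, bounds the weighted sum $\tsum_t\theta_t\bbe[Q(x^t,x^*)]$ directly from the telescoped inequality (Lemma~\ref{tech1_Q}), passes to $Q(\underline x^k,x^*)$ by convexity with the \emph{same} weights $\theta_t=\alpha^{-t}$, and then recovers $\psi(\underline x^k)-\psi^*\le Q(\underline x^k,x^*)+\tfrac{L_f}{2}\|\underline x^k-x^*\|^2$ via \eqref{psi_Q}; the $\alpha^{k/2}$ rate comes from the elementary estimate $(k+1)\alpha^k(1-\alpha)/(1-\alpha^k)\le 3\alpha^{k/2}$ applied to the $k$-term sum bounding $\|\underline x^k-x^*\|^2$, not from half-power weights $\theta_t\propto\alpha^{-t/2}$.
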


\vgap
In view of Theorem~\ref{main_ran_sc}, we can provide bounds on the total number of gradient evaluations performed by RGEM to find a stochastic $\epsilon$-solution of problem \eqnok{cp}, i.e., a point $\bar x \in X$ s.t. $\bbe[\psi(\bar x)-\psi^*] \le \epsilon$. 
Theorem~\ref{main_ran_sc} implies the number of gradient evaluations of $f_i$ 
performed by RGEM to find a stochastic $\epsilon$-solution of \eqnok{cp}
can be bounded by 
\beq\label{def_K_rws}
K(\epsilon,C,\sigma_0^2)= 2\left(m+\sqrt{m^2+16mC}\right)\log \tfrac{6\max\{m,C\}\Delta_{0,\sigma_0}}{\epsilon} = {\cal O} \left\{\left(m +\sqrt{\tfrac{m\hat L}{\mu}}\right)\log \tfrac{1}{\epsilon}\right\}.
\eeq
Here $C=\hat L/\mu$.
Therefore, whenever $\sqrt{mC}\log(1/\epsilon)$ is dominating, and $L_f$ and $\hat L$ are in the same order of magnitude, 
RGEM can save up to ${\cal O}(\sqrt{m})$ gradient evaluations of the component function $f_i$ than the optimal deterministic first-order methods.
More specifically, RGEM does not require any exact gradient computation and its communication cost is similar to pure stochastic gradient descent. 
To the best of our knowledge, it is the first time that such an optimal RIG method is presented for solving \eqref{cp} in the literature.
It should be pointed out that while the rates of convergence of RGEM obtained in Theorem~\ref{main_ran_sc} is stated in terms of expectation,
we can develop large-deviation results for these rates of convergence using similar techniques in \cite{lan2015optimal} 
for solving strongly convex problems. 

Furthermore, if a one-time exact gradient evaluation is available at the initial point, i.e., $y^{-1}=y^0=(\nabla f_1(x^0),\dots, \nabla f_m(x^0))$, we can drop the assumption in \eqref{def_sigma} and employ a more aggressive stepsize policy with 
\[
	\alpha=1-\tfrac{2}{m+\sqrt{m^2+8m\hat L/\mu}},
\]
Similarly, we can demonstrate that the number of gradient evaluations of $f_i$ performed by RGEM with this initialization method to find a 
stochastic $\epsilon$-solution can be bounded by
\[
\left(m+\sqrt{m^2+8mC}\right)\log\left( \tfrac{6\max\{m,C\}\Delta_{0,0}}{\epsilon} \right) +m= {\cal O} \left\{\left(m +\sqrt{\tfrac{m\hat L}{\mu}}\right)\log \tfrac{1}{\epsilon}\right\}.
\]

\vgap
\subsection{RGEM for stochastic finite-sum optimization}\label{sec_online}
We discuss in this subsection the stochastic finite-sum optimization and online learning problems, where only noisy gradient information of $f_i$ can be accessed via a stochastic first-order ($\SO$) oracle. In particular, for any given point $\underline{x}_i^t\in X$, the $\SO$ oracle outputs a vector $G_i(\underline{x}_i^t,\xi_i^t)$ s.t.
\begin{align}
\bbe_{\xi}[G_i(\underline{x}_i^t,\xi_i^t)] = \nabla f_i(\underline x_i^t)&,\ i=1,\ldots,m,\label{assmp_unbias}\\
\bbe_{\xi}[\|G_i(\underline{x}_i^t,\xi_i^t)-\nabla f_i(\underline{x}_i^t)\|_*^2]\le \sigma^2&, \ i=1,\ldots,m.\label{assump_var}
\end{align}
We also assume that throughout this subsection that the $\|\cdot\|$ is associated with the inner product $\la \cdot, \cdot\ra$.

\vgap
As shown in Algorithm~\ref{alg_rpaged_sto}, the 
RGEM for stochastic finite-sum optimization is naturally obtained by replacing the gradient evaluation of $f_i$ in Algorithm~\ref{alg_rpaged} (see \eqref{def_yt1}) with a stochastic gradient estimator of $f_i$ given in \eqref{def_yt1_sto}. In particular, at each iteration, we collect $B_t$ number of stochastic gradients of only one randomly selected component $f_i$ and take their average as the stochastic estimator of $\nabla f_i$. 
Moreover,
it needs to be mentioned that the way RGEM initializes gradients, i.e, $y^{-1}=y^0=\0b$, is very important for stochastic optimization, since it is usually impossible to compute exact gradient for expectation functions even at the initial point.

\begin{algorithm}[H]
	\caption{RGEM for stochastic finite-sum optimization}
	\label{alg_rpaged_sto}
	\begin{algorithmic}
	\State
	This algorithm is the same as Algorithm~\ref{alg_rpaged} except 
	that \eqref{def_yt1} is replaced by 
		
		
		\State
		\beq\label{def_yt1_sto}
		y_i^t=
		\begin{cases}
		\tfrac{1}{B_t}\tsum_{j=1}^{B_t}G_i(\underline{x}_{i}^t,\xi_{i,j}^t), \ \ i=i_t,\\
		y_i^{t-1}, \ \ i\ne i_t.
		\end{cases}
		\eeq
		\State
		Here, $G_i(\underline{x}_{i}^t,\xi_{i,j}^t)$, $j = 1,\ldots,B_t$, are stochastic gradients of $f_i$ computed by the $\SO$ oracle at $\underline{x}_{i}^t$.
	\end{algorithmic}
\end{algorithm}


Under the standard assumptions in \eqref{assmp_unbias} and \eqref{assump_var} for stochastic optimization, and with proper choices of algorithmic parameters, Theorem~\ref{main_ran_sto} shows that RGEM can achieve the optimal ${\cal O}\{\sigma^2/\mu^2\epsilon\}$ rate of convergence (up to a certain logarithmic factor) for solving strongly convex problems given in the form of \eqref{sp} in terms of the number of stochastic gradients of $f_i$. 
The proof of the this result can be found in Subsection~\ref{sec_sto}.

\vgap
\begin{theorem}\label{main_ran_sto}
	Let $x^*$ be an optimal solution of \eqnok{sp}, $x^k$ and $\underline x^k$ be generated by Algorithm~\ref{alg_rpaged_sto}, and $\hat L=\max_{i=1,\dots,m}{L_i}$. 
	Suppose that $\sigma_0$ and $\sigma$ are defined in \eqref{def_sigma} and \eqref{assump_var}, respectively.
	Given the iteration limit $k$,  let $\{\tau_t\}$, $\{\eta_t\}$ and $\{\alpha_t\}$ be set to \eqref{constant_step_s_ran} with $\alpha$ being set as \eqref{def_nalpha}, and we also set 
	\beq\label{def_bt}
	B_t = \lceil k(1-\alpha)^2\alpha^{-t}\rceil, \ t =1,\ldots,k,
	\eeq
	then
	\begin{align}
		\bbe[P(x^k,x^*)]&\le \tfrac{2\alpha^k\Delta_{0,\sigma_0,\sigma}}{\mu} ,\label{ran_bnd_s1_sto}\\
		\bbe[\psi(\underline{x}^k)-\psi(x^*)]&\le 6\max\left\{m,\tfrac{\hat L}{\mu}\right\} \Delta_{0,\sigma_0,\sigma}\alpha^{k/2},	\label{ran_bnd_s2_sto}
		\end{align}
		where the expectation is taken w.r.t. $\{i_t\}$ and $\{\xi_{i}^t\}$ and 
		\beq\label{def_DSS}
		\Delta_{0,\sigma_0,\sigma}:= \mu P(x^0,x^*)+\psi(x^0)-\psi(x^*) + \tfrac{\sigma_0^2/m+5\sigma^2}{\mu}.
		\eeq
\end{theorem}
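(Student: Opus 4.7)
The plan is to adapt the convergence analysis of Theorem~\ref{main_ran_sc} by treating the stochastic gradient $y_i^t$ in \eqref{def_yt1_sto} as the true incremental gradient corrupted by a mean-zero noise. Specifically, for $i = i_t$, I would write $y_i^t = \nabla f_i(\underline{x}_i^t) + \delta^t$ where
\[
\delta^t := \tfrac{1}{B_t}\tsum_{j=1}^{B_t}\bigl[G_{i_t}(\underline{x}_{i_t}^t,\xi_{i_t,j}^t) - \nabla f_{i_t}(\underline{x}_{i_t}^t)\bigr].
\]
Conditioning on the $\sigma$-algebra $\mathcal{F}_{t-1}$ generated by $(i_1,\xi^1,\ldots,i_{t-1},\xi^{t-1},i_t)$, assumption \eqref{assmp_unbias} yields $\bbe[\delta^t\mid \mathcal{F}_{t-1}]=0$, and assumption \eqref{assump_var} together with the i.i.d.\ samples inside the mini-batch gives $\bbe[\|\delta^t\|_*^2\mid \mathcal{F}_{t-1}]\le \sigma^2/B_t$.

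Next I would revisit the single-iteration recursion established in the proof of Theorem~\ref{main_ran_sc} in Subsection~\ref{sec_rws}. That recursion arises from the optimality condition of the prox-mapping \eqref{def_xt1} and the smoothness/strong convexity of $f_i$ and $w$; substituting $y_i^t \mapsto \nabla f_i(\underline{x}_i^t) + \delta^t\mathbf{1}_{\{i=i_t\}}$ produces the same inequality plus two additional terms: (i) a linear error $\la \delta^t, u^t \ra$ for some $\mathcal{F}_{t-1}$-measurable direction $u^t$ coming from the prox-update, and (ii) a quadratic remainder proportional to $\|\delta^t\|_*^2$ that appears after bounding the linear cross-terms via Young's inequality against the strong-convexity margin $(\eta+\mu)\|x^t - x^{t-1}\|^2/2$. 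Taking conditional expectation kills the linear piece, while the quadratic remainder is controlled by $C_1 \sigma^2/B_t$ for an explicit constant $C_1$ depending on $\eta$, $\mu$, $\tau$.

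Then I would sum the resulting recursion with the same weights $\theta_t$ used in the deterministic proof (which decay like $\alpha^{-t}$, inducing the telescoping/contractive structure). The crucial computation is the noise sum
\[
\tsum_{t=1}^{k}\theta_t\, \tfrac{\sigma^2}{B_t} \;\le\; \tsum_{t=1}^{k} \alpha^{-t}\cdot \tfrac{\sigma^2 \alpha^t}{k(1-\alpha)^2} \;=\; \tfrac{\sigma^2}{(1-\alpha)^2},
\]
where the choice $B_t = \lceil k(1-\alpha)^2\alpha^{-t}\rceil$ from \eqref{def_bt} is precisely tuned so that the per-step variance $\sigma^2/B_t$ cancels the amplification $\theta_t$. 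Multiplying through by $(\sum_t\theta_t)^{-1}\asymp \alpha^k$ and using $(1-\alpha)^{-2}\mu^{-1} = O(1/\mu^2)$ implicit in the definition of $\alpha$ in \eqref{def_nalpha}, the total noise contributes an $\alpha^k\cdot O(\sigma^2/\mu)$ term, which is exactly the $\sigma^2/\mu$ piece appearing in $\Delta_{0,\sigma_0,\sigma}$. Combining this with the deterministic bound from Theorem~\ref{main_ran_sc} yields \eqref{ran_bnd_s1_sto}, and invoking strong convexity $\psi(\underline x^k)-\psi(x^*) \ge (\mu/2)\|\underline x^k - x^*\|^2$ together with a standard convexity/ergodic argument on $\underline x^k$ (as in the deterministic case) gives \eqref{ran_bnd_s2_sto}.

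The main obstacle will be the careful bookkeeping of the cross term $\la \delta^t, u^t\ra$: the direction $u^t$ involves the iterate $x^t$, which itself depends on $\delta^t$ through the prox-update \eqref{def_xt1}, so $u^t$ is \emph{not} $\mathcal{F}_{t-1}$-measurable and one cannot simply discard the linear term by conditional unbiasedness. The standard remedy, which I would adopt, is to introduce an auxiliary ``noiseless'' prox-iterate $\tilde x^t$ computed with $\nabla f_{i_t}(\underline{x}_{i_t}^t)$ in place of $y_{i_t}^t$, decompose $\la \delta^t, u^t\ra = \la \delta^t, \tilde u^t\ra + \la \delta^t, u^t - \tilde u^t\ra$, and use the nonexpansiveness of the prox-mapping to bound $\|u^t - \tilde u^t\|$ by a constant multiple of $\|\delta^t\|_*/(\eta+\mu)$. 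The first piece then vanishes in conditional expectation and the second produces only an extra $\|\delta^t\|_*^2/(\eta+\mu)$ term absorbed into the variance budget above. Once this decomposition is in place, the remainder of the argument is a direct transcription of the analysis of Theorem~\ref{main_ran_sc}.
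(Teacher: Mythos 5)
Your overall strategy---writing $y_{i_t}^t=\nabla f_{i_t}(\underline x_{i_t}^t)+\delta^t$ with $\bbe[\delta^t\mid\mathcal{F}_{t-1}]=0$ and $\bbe[\|\delta^t\|_*^2\mid\mathcal{F}_{t-1}]\le\sigma^2/B_t$, re-running the deterministic recursion, and observing that $B_t=\lceil k(1-\alpha)^2\alpha^{-t}\rceil$ is tuned so that $\theta_t\sigma^2/B_t$ is summable---is the same one the paper uses: Proposition~\ref{main_sto_rgem} is exactly the noisy analogue of Proposition~\ref{main_proper}, with the correlated cross terms $\la x^t-x^{t-1}, y_{i_{t-1}}^{t-1}-y_{i_{t-1}}^{t-2}\ra$ handled by a three-way Young's inequality against $\theta_t\eta_tP(x^{t-1},x^t)$, which is why the stepsize condition tightens from \eqref{alpha_tau_eta_rws} to \eqref{alpha_tau_eta_sto}. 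Your ghost-iterate device is a workable alternative to that Young's inequality, but note that the correlation you worry about is mis-indexed: the prox-step \eqref{def_xt1} uses $\tilde y^t$, which depends only on $y^{t-1}$ and $y^{t-2}$, so $x^t$ is independent of $\delta^t$ (and the term $\la x^k-x, y_{i_k}^k-y_{i_k}^{k-1}\ra$ does vanish in conditional expectation, as the paper exploits via \eqref{batch_g}); the genuinely correlated pairs involve $x^t$ and the \emph{past} noises $\delta^{t-1},\delta^{t-2}$.

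The genuine gap is in your variance bookkeeping. Your noise budget $\tsum_t\theta_t\sigma^2/B_t$ accounts only for the fresh noise injected at iteration $t$. But RGEM refreshes one block per iteration, so the stored value $y_{i_t}^{t-1}$ appearing in the telescoped error term $\|\nabla f_{i_t}(\underline x_{i_t}^{t-1})-y_{i_t}^{t-1}\|_*^2$ was computed at the random last time $l<t$ that block $i_t$ was selected, with the \emph{smaller} batch $B_l$; since $\underline x_{i_t}^{t-1}=\underline x_{i_t}^{l}$, this term equals $\|\delta^l\|_*^2$ and has expectation of order $\sigma^2/B_l$, not $\sigma^2/B_{t}$. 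Averaging over the distribution of $l$ gives, as in the paper, $\bbe[\|\nabla f_{i_t}(\underline x_{i_t}^{t-1})-y_{i_t}^{t-1}\|_*^2]\le(\tfrac{m-1}{m})^{t-1}\sigma_0^2+\tsum_{l=1}^{t-1}\tfrac{1}{m}(\tfrac{m-1}{m})^{t-1-l}\tfrac{\sigma^2}{B_l}$, and when this is weighted by $2\theta_t\alpha_{t+1}/(m\eta_{t+1})$ and summed over $t$ one must verify that the resulting double geometric sum converges, which hinges on $\tfrac{m-1}{m\alpha}<1$, i.e., on $\alpha\ge\tfrac{2m-1}{2m}$ as guaranteed by \eqref{def_nalpha}. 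This stale-variance contribution is where the dominant part of the $5\sigma^2/\mu$ in \eqref{def_DSS} comes from; without it, both your constant and, more importantly, your justification that the exponentially growing batch schedule suffices even though a given block goes unrefreshed for $\Theta(m)$ iterations on average, are incomplete. A smaller slip: the factor converting $\sigma^2/(1-\alpha)^2$ into $O(\sigma^2/\mu)$ is the coefficient $\alpha_t/(m\eta_t)=(1-\alpha)/\mu$ multiplying each quadratic noise term, not the claimed identity $(1-\alpha)^{-2}\mu^{-1}=O(1/\mu^2)$, which fails for the $\alpha$ in \eqref{def_nalpha}.
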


\vgap
In view of \eqref{ran_bnd_s2_sto}, the number of iterations performed by RGEM to find a stochastic $\epsilon$-solution of \eqnok{sp}, 
can be bounded by  
\beq\label{def_hK_sto}
\hat K(\epsilon,C,\sigma_0^2,\sigma^2) := 2\left(m+\sqrt{m^2+16mC}\right)\log \tfrac{6\max\{m,C\} \Delta_{0,\sigma_0,\sigma}}{\epsilon}.
\eeq
Furthermore, in view of \eqref{ran_bnd_s1_sto} this iteration complexity bound can be improved to 
\beq\label{def_bK_sto}
\bar K(\epsilon,\alpha,\sigma_0^2,\sigma^2):= \log_{1/\alpha}\tfrac{2\tilde \Delta_{0,\sigma_0,\sigma}}{\mu\epsilon},
\eeq
in terms of finding a point $\bar x \in X$ s.t. $\bbe[P(\bar x,x^*)] \le \epsilon$. Therefore, the corresponding number of stochastic gradient evaluations performed by RGEM for solving problem \eqref{sp} can be bounded by 
\beq\label{complexity_sfo_p}
\tsum_{t=1}^k B_t \le  k\tsum_{t=1}^k(1-\alpha)^2\alpha^{-t} + k =
{\cal O}\left\{\left(\tfrac{\Delta_{0,\sigma_0,\sigma}}{\mu\epsilon} + m + \sqrt{mC}\right)\log\tfrac{\Delta_{0,\sigma_0,\sigma}}{\mu\epsilon}\right\}, 
\eeq
which together with \eqref{def_DSS} imply that the total number of required stochastic gradients or samples of the random variables $\xi_i, \ i=1,\ldots,m$, can be bounded by
\[
	{\cal \tilde O}\left\{\tfrac{\sigma_0^2/m+\sigma^2}{\mu^2\epsilon}+\tfrac{\mu P(x^0,x^*) +\psi(x^0)-\psi^*}{\mu\epsilon}+ m + \sqrt{\tfrac{m\hat L}{\mu}}\right\}.
\]
Observe that this bound does not depend on the number of terms $m$ for small enough $\epsilon$.
To the best of our knowledge, it is the first time that such a convergence result is established for RIG algorithms to solve distributed stochastic finite-sum problems. 
This complexity bound in fact is in the same order of magnitude (up to a logarithmic factor) as the complexity bound achieved by the optimal accelerated stochastic approximation methods \cite{GhaLan12-2a,GhaLan13-1,Lan10-3}, which uniformly sample all the random variables $\xi_i, \ i=1,\ldots,m$.
However, this latter approach will thus involve much higher communication costs in the distributed setting (see Subsection~\ref{sec_distributed} for more discussions). 

\vgap
\subsection{RGEM for distributed optimization and machine learning}\label{sec_distributed}
This subsection is devoted to RGEMs (see  Algorithm~\ref{alg_rpaged} and Algorithm~\ref{alg_rpaged_sto}) from two different perspectives, i.e., the server and the activated agent under a distributed setting. We also discuss the communication costs incurred by RGEM under this setting.

Both the server and agents in the distributed network start with the same global initial point $x^0$, i.e., 
$
\underline x_i^0 = x^0, \ i = 1, \ldots, m,
$
and the server also sets $\Delta y = \0b$ and $g^0= \0b$. 
During the process of RGEM, the server updates iterate $x^t$ and calculates the output solution 
$\underline x^k$ (cf. \eqref{def_ergodic_m})
which is given by $\rm{sum}x/\rm{sum}\theta$. Each agent only stores its local variable $\underline{x}_i^t$ and updates it according to the information received from the server (i.e., $x^{t}$) when activated.
The activated agent also needs to upload the changes of gradient $\Delta y_i$ to the server. 
Observe that since $\Delta y$ might be sparse,  uploading it will incur smaller amount of communication costs than uploading the new gradient $y_i^t$.
Note that line~\ref{line:history_y} of RGEM from the $i_t$-th agent's perspective is optional if the agent saves historic gradient information from the last update. 

\newpage
\begin{multicols}{2}
\begin{RGEM}[H]
\caption{The server's perspective}
\begin{algorithmic}[1]
\While{$t\le k$}
\State \label{line:prox-map}$x^t \gets\mathcal{M}_{X}(g^{t-1} + \tfrac{\alpha_t}{m}\Delta y,x^{t-1},\eta_t)$ 

\State
$
\rm{sum}x \gets \rm{sum}x+ \theta_t x^{t} $
\State
$
\rm{sum}\theta \gets \rm{sum}\theta+ \theta_t$
\State\label{line:call}Send signal to the $i_t$-th agent where $i_t$ is selected uniformly from $\{1,\ldots,m\}$
\If {$i_t$-th agent is responsive} 
\State
Send current iterate $x^t$ to $i_t$-th agent
\If {Receive feedback $\Delta y$}
\State
$
g^t \gets g^{t-1} + \Delta y
$
\State
$
	t \gets t+1
$
\Else \ \textbf{goto} \emph{Line \ref{line:call}}
\EndIf
\Else \ \textbf{goto} \emph{Line \ref{line:call}} 
\EndIf
\EndWhile
\end{algorithmic}
\end{RGEM}
\begin{RGEM}[H]
\caption{The activated $i_t$-th agent's perspective}
\begin{algorithmic}[1]
\State
Download the current iterate $x^{t}$ from the server 
\If {$t = 1$}
\State
$y_i^{t-1} \gets \0b$
\Else 
\State\label{line:history_y} 
$y_i^{t-1} \gets \nabla f_i(\underline x_i^{t-1})$ \Comment{Optional}
\EndIf
\State
$\underline{x}_{i}^t \gets (1+\tau_t)^{-1}(x^t+\tau_t\underline{x}^{t-1}_{i})$
\State
$y_i^t\gets \nabla f_i(\underline{x}_{i}^t)$
\State
Upload the local changes to the server, i.e., $\Delta y_i = y_i^{t} - y_i^{t-1}$
\end{algorithmic}
\end{RGEM}
\end{multicols}
We now add some remarks about the potential benefits of RGEM for distributed optimization and machine learning. 
Firstly, since RGEM does not require any exact gradient evaluation of $f$, it does not need to wait for the responses from all agents in order to compute an exact gradient. Each iteration of RGEM only involves communication between the server and the activated $i_t$-th agent. In fact, RGEM will move to the next iteration in case no response is received from the $i_t$-th agent. This scheme works under the assumption that the probability for any agent being responsive or available at a certain point of time is equal.
However, all other optimal RIG algorithms, except RPDG \cite{lan2015optimal}, need the exact gradient information from all network agents once in a while, which incurs high communication costs and synchronous delays as long as one agent is not responsive.
Even RPDG requires a full round of communications and synchronization at the initial point.

Secondly, since each iteration of RGEM involves only constant number of communication rounds between the server and one selected agent, the communication complexity for RGEM under distributed setting can be bounded by
\[
{\cal O} \left\{\left(m +\sqrt{\tfrac{m\hat L}{\mu}}\right)\log \tfrac{1}{\epsilon}\right\}.
\]
Therefore, it can save up to ${\cal O}\{\sqrt{m}\}$ rounds of communication than the optimal deterministic first-order methods.

\vgap
For solving distributed stochastic finite-sum optimization problems \eqref{sp}, RGEM from the $i_t$-th agent's perspective will be slightly modified as follows. 
\begin{RGEM}[H]
\caption{The activated $i_t$-th agent's perspective for solving \eqref{sp}}
\label{RGEM_device_online}
\begin{algorithmic}[1]
\State
Download the current iterate $x^{t}$ from the server
\If {$t = 1$} 
\State
$y_i^{t-1} \gets \0b$  \Comment{Assuming RGEM saves $y_i^{t-1}$ for $t\ge 2$ at the latest update}
\EndIf
\State
$\underline{x}_{i}^t \gets (1+\tau_t)^{-1}(x^t+\tau_t\underline{x}^{t-1}_{i})$
\State
$y_i^t\gets \tfrac{1}{B_t}\tsum_{j=1}^{B_t}G_i(\underline{x}_{i}^t,\xi_{i,j}^t)$ \Comment{$B_t$ is the batch size, and $G_i$'s are the stochastic gradients given by $\SO$} 
\State
Upload the local changes to the server, i.e., $\Delta y_i = y_i^{t} - y_i^{t-1}$
\end{algorithmic}
\end{RGEM} 

Similar to the case for the deterministic finite-sum optimization, the total number of communication rounds performed by the above RGEM  
can be bounded by 
\[
{\cal O} \left\{\left(m +\sqrt{\tfrac{m\hat L}{\mu}}\right)\log \tfrac{1}{\epsilon}\right\},
\]
for solving \eqref{sp}.
Each round of communication only involves the server and a randomly selected agent. 
This communication complexity seems to be optimal, since it matches the lower complexity bound \eqref{RIG_lb} established in \cite{lan2015optimal}.
Moreover, the sampling complexity, i.e., the total number of samples to be collected by all the agents, is also nearly optimal and comparable to the case when all these samples are collected in a centralized location and processed by an optimal stochastic approximation method.
On the other hand, if one applies an existing optimal stochastic approximation method to solve the distributed stochastic optimization problem, the communication complexity will be as high as ${\cal O}(1/\sqrt\epsilon)$, which is much worse than RGEM.

\vgap

\setcounter{equation}{0}
\section{Gradient extrapolation method: dual of Nesterov's acceleration}\label{sec_deter}
Our goal in this section is to introduce a new algorithmic framework, referred to as the gradient extrapolation method (GEM),
for solving the convex optimization problem given by
\beq\label{cp1}
\psi^*:=\min_{x \in X} \left\{\psi(x):= f(x)+\mu w(x)\right\}.
\eeq
We show that GEM can be viewed as a dual of Nesterov's accelerated
gradient method although these two algorithms appear to be quite different.
Moreover, GEM possess some nice properties which enable us to develop and analyze
the random gradient extrapolation method for distributed and stochastic optimization.

\vgap
\subsection{Generalized Bregman distance}\label{sec_pre}
In this subsection, we provide a brief introduction to the generalized Bregman distance defined in \eqref{primal_prox} and 
some properties for its associated prox-mapping defined in\eqref{prox_mapping}.

Note that whenever $w$ is non-differentiable, we need to specify a
particular selection of the subgradient $w'$ before performing the prox-mapping.
We assume throughout this paper that such a selection of $w'$ is 
defined recursively as follows. Denote $x^1 \equiv \Pr_X(g, x^0, \eta)$.
By the optimality condition of \eqnok{prox_mapping}, 
we have 
\[
g + (\mu + \eta) w'(x^1) -\eta w'(x^0) \in {\cal N}_X(x^1),
\]
where ${\cal N}_X(x^1):= \{v\in \bbr^n:v^T(x- x^1)\le 0, \forall x\in X\}$ denotes the normal cone of $X$ at $x^1$.
Once such a $w'(x^1)$ satisfying the above relation
is identified, we will use it as a subgradient when defining
$P(x^1, x)$ in the next iteration. 
Note that such a subgradient can be identified as long as $x^1$ is obtained, since it satisfies the optimality condition of \eqnok{prox_mapping}.

\vgap

The following lemma, which generalizes   Lemma 6 of \cite{LaLuMo11-1} and Lemma 2 of \cite{GhaLan12-2a}, characterizes the solutions to \eqnok{prox_mapping}. The proof of this result can be found in Lemma 5 of  \cite{lan2015optimal}.

\begin{lemma} \label{opt_condi}
	Let $U$ be a closed convex set and a point $\tilde u \in U$ be given. 
	Also let $w:U \to \bbr$ be a convex function and
	\[
	W(\tilde u, u) = w(u) - w(\tilde u) - \langle w'(\tilde u), u - \tilde u \rangle
	\]
	for some $w'(\tilde u) \in \partial w(\tilde u)$.
	Assume that the function $q: U \to \bbr$ satisfies
	\[
	q(u_1) - q(u_2) - \langle q'(u_2), u_1 - u_2 \rangle \ge \mu_0 W(u_2, u_1), \ \ \forall u_1, u_2 \in U
	\]
	for some $\mu_0 \ge 0$.
	Also assume that the scalars $\mu_1$ and $\mu_2$ 
	are chosen such that $\mu_0 + \mu_1 + \mu_2 \ge 0$.
	If
	\[
	u^* \in \Argmin \{ q(u) + \mu_1 w(u) + \mu_2 W(\tilde u, u) : u \in U\},
	\]
	then for any $ u \in U$, we have
	\[
	q(u^*) + \mu_1 w(u^*) + \mu_2 W(\tilde u, u^*) + (\mu_0 + \mu_1 + \mu_2) W(u^*,u) 
	\le q(u) + \mu_1 w(u) + \mu_2 W(\tilde u, u).
	\]
\end{lemma}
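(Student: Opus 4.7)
The plan is to derive the inequality as a direct consequence of the first-order optimality condition at $u^*$, combined with an exact three-point identity for the generalized Bregman distance $W$.

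First, I would record the optimality condition. Since $u^*$ minimizes $q + \mu_1 w + \mu_2 W(\tilde u,\cdot)$ over the closed convex set $U$, there exist a subgradient $q'(u^*) \in \partial q(u^*)$ and a subgradient $w'(u^*) \in \partial w(u^*)$, the latter being the one fixed by the recursive selection rule of Subsection~\ref{sec_pre}, such that for every $u \in U$,
\[
\langle q'(u^*) + \mu_1 w'(u^*) + \mu_2\bigl(w'(u^*) - w'(\tilde u)\bigr),\ u - u^*\rangle \ge 0,
\]
where $w'(u^*) - w'(\tilde u)$ is the natural subgradient of $W(\tilde u,\cdot)$ at $u^*$.

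Next, I would assemble three auxiliary relations. The hypothesis on $q$ gives
\[
q(u) - q(u^*) - \mu_0 W(u^*,u) \ge \langle q'(u^*), u - u^*\rangle.
\]
Directly from the definition $W(u^*,u) = w(u) - w(u^*) - \langle w'(u^*), u-u^*\rangle$,
\[
w(u) - w(u^*) - W(u^*,u) = \langle w'(u^*), u - u^*\rangle,
\]
and a short algebraic manipulation that expands all three $W$'s yields
\[
W(\tilde u, u) - W(\tilde u, u^*) - W(u^*,u) = \langle w'(u^*) - w'(\tilde u), u - u^*\rangle.
\]

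Finally, I would form the linear combination: take the $q$-inequality once, the $w$-identity multiplied by $\mu_1$, and the $W(\tilde u,\cdot)$-identity multiplied by $\mu_2$, and add them. The right-hand sides collect precisely to $\langle q'(u^*) + \mu_1 w'(u^*) + \mu_2(w'(u^*) - w'(\tilde u)), u - u^*\rangle$, which is non-negative by the optimality condition. The $W(u^*,u)$ terms appearing on the left of the three relations aggregate into $(\mu_0 + \mu_1 + \mu_2)\,W(u^*,u)$, and a rearrangement yields the claimed inequality.

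The delicate point is consistency of the subgradient $w'(u^*)$ across the three roles it plays, namely in the optimality condition, in the definition of $W(u^*,u)$, and as part of the subgradient of $W(\tilde u,\cdot)$ at $u^*$. The recursive selection rule adopted in the paper fixes a single $w'(u^*)$ once the prox-mapping is solved and reuses it when defining $W(u^*,\cdot)$ later, which is precisely what makes the two $w$-relations above hold as equalities rather than inequalities. Because of this, the signs of $\mu_1$ and $\mu_2$ are immaterial in combining the three relations; the assumption $\mu_0 + \mu_1 + \mu_2 \ge 0$ enters only to ensure that the composite objective is sufficiently convex for the argmin and its first-order condition to be well defined.
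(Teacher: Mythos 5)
Your proof is correct. The paper does not reproduce an argument for this lemma but defers it to Lemma 5 of \cite{lan2015optimal}, and the route you take --- the first-order optimality condition at $u^*$, the three-point identity $W(\tilde u,u)-W(\tilde u,u^*)-W(u^*,u)=\langle w'(u^*)-w'(\tilde u),u-u^*\rangle$, and the observation that the two $w$-relations are exact identities so the signs of $\mu_1,\mu_2$ are harmless --- is exactly the standard argument used there, just written out in expanded form rather than packaged as ``the composite objective is $(\mu_0+\mu_1+\mu_2)$-strongly convex relative to $W$.'' Your remark about keeping a single consistent selection of $w'(u^*)$ across the optimality condition and the definition of $W(u^*,\cdot)$ correctly identifies the only delicate point and resolves it the same way the paper's recursive subgradient-selection rule does.
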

%

\vgap
\subsection{The algorithm}\label{sec_deter_alg}
As shown in Algorithm~\ref{alg_paged}, GEM starts 
with a gradient extrapolation step \eqref{def_tgt_i} to compute $\tilde g^t$ from the two previous gradients $g^{t-1}$ and
$g^{t-2}$. Based on $\tilde g^t$, it performs a proximal gradient descent step in \eqref{def_xt_i} and updates
the output solution $\underline x^t$. Finally, the gradient at $\underline x^t$ is computed
for gradient extrapolation in the next iteration. 
This algorithm is a special case of RGEM in Algorithm~\ref{alg_rpaged} (with $m=1$).
\begin{algorithm} [H]
	\caption{An optimal gradient extrapolation method (GEM)}
	\label{alg_paged}
	\begin{algorithmic}
		\State 
		\noindent {\bf Input:}  Let $x^0 \in X$, and the nonnegative parameters $\{\alpha_t\}$, $\{\eta_t\}$, and $\{\tau_t\}$ be given. 
		\State Set $\underline x^0 = x^0$ and $g^{-1}=g^0=\nabla f(x^0)$.
		\State {\bf for} $t = 1, 2, \ldots, k$ {\bf do}
		\begin{align}
		\tilde g^{t} &= \alpha_t (g^{t-1} - g^{t-2}) + g^{t-1}.\label{def_tgt_i}\\
		x^{t} &= \Pr_{X} (\tilde g^t, x^{t-1}, \eta_t). \label{def_xt_i} \\
		\underline x^{t} &= \left( x^t + \tau_t \underline x^{t-1} \right) / (1+\tau_t). \label{def_yt_al1} \\
		g^{t} &= \nabla f(\underline x^{t}). \label{def_yt_al2}
		\end{align}
		\State {\bf end for}
		\State {\bf Output:} $\underline{x}^k$.
	\end{algorithmic}
\end{algorithm} 

We now show that GEM can be viewed as the dual of the well-known Nesterov's accelerated gradient (NAG) method as studied in \cite{lan2015optimal}. 
To see such a relationship, we will first rewrite GEM in a primal-dual form.
Let us consider the dual space ${\cal G}$, where the
gradients of $f$ reside, and equip it with the conjugate norm $\|\cdot\|_*$. 
Let $J_f: {\cal G} \to \bbr$ be the conjugate function of $f$ such that
$
f(x) := \max_{g \in {\cal G}} \{\langle x, g \rangle - J_f(g)\}.
$
We can reformulate the original problem in \eqref{cp1} as the following saddle point problem:
\beq \label{cpsaddle_f}
\psi^* := \min_{x \in X} \left\{ \max_{g \in {\cal G} }  \{\langle x, g \rangle - J_f(g)\}+ \mu \, w(x) \right\}.
\eeq
It is clear that $J_f$ is strongly convex
with modulus $1/ L_f$ w.r.t. $\|\cdot\|_*$ (See Chapter E in \cite{hiriart2012fundamentals} for details). Therefore, we 
can define its associated dual generalized Bregman distance and dual prox-mappings as
\begin{align} 
D_f(g^0, g) &:= J_f(g) - [J_f(g^0) + \langle J_f'(g^0), g - g^0 \rangle], \label{def_Df}\\
\Pr_{{\cal G}} (-\tilde x, g^0, \tau) &:= \arg\min\limits_{g \in {\cal G}} \left\{ 
\langle - \tilde x, g \rangle  + J_f(g)+ \tau D_f(g^0, g)\right\},  \label{d_prox_mapping_f}
\end{align}
for any $g^0, g \in {\cal G}$. 
The following result, whose proof is given in Lemma 1 of \cite{lan2015optimal}, shows that
the computation of the dual prox-mapping associated with $D_f$ is equivalent
to the computation of $\nabla f$.

\vgap

\begin{lemma} \label{lemma_dual_move}
Let $\tilde x \in X$ and $g^0 \in {\cal G}$ be given and $D_f(g^0, g)$
be defined in \eqnok{def_Df}. For any $\tau > 0$, let us
denote $z = [\tilde x + \tau J_f'(g^0)] / (1 + \tau)$.
Then we have $\nabla f(z) = \Pr_{{\cal G}} ( -\tilde x, g^0, \tau)$.
\end{lemma}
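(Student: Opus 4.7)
The plan is to exploit the Fenchel conjugate relationship between $f$ and $J_f$. The dual prox-mapping in \eqref{d_prox_mapping_f} is the minimizer of a strongly convex function (since $J_f$ is $1/L_f$-strongly convex and $\tau > 0$), so it has a unique optimal solution $g^* \in {\cal G}$. I will first write down the optimality condition characterizing $g^*$, solve it explicitly for a particular subgradient of $J_f$ at $g^*$, and then invert using the standard conjugate identity $z \in \partial J_f(g^*) \iff g^* \in \partial f(z)$ to conclude that $g^* = \nabla f(z)$.

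Concretely, using the definition \eqref{def_Df} of $D_f$, the objective in \eqref{d_prox_mapping_f} reads
\[
\langle -\tilde x, g\rangle + J_f(g) + \tau\bigl[J_f(g) - J_f(g^0) - \langle J_f'(g^0), g - g^0\rangle\bigr].
\]
Taking the subdifferential and invoking the optimality condition for $g^*$ yields the inclusion
\[
0 \in -\tilde x + (1+\tau)\,\partial J_f(g^*) - \tau J_f'(g^0),
\]
so there exists some $v^* \in \partial J_f(g^*)$ with $(1+\tau) v^* = \tilde x + \tau J_f'(g^0)$, i.e., $v^* = z$. This is the key algebraic identification: the optimality condition for the dual prox-mapping exactly reproduces the definition of $z$ given in the lemma statement.

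Finally, since $f$ and $J_f$ are closed proper convex functions that are Fenchel conjugates of each other, the subdifferential inversion formula gives $z \in \partial J_f(g^*) \iff g^* \in \partial f(z)$. Because $f$ is differentiable (indeed, $\nabla f$ is $L_f$-Lipschitz by \eqref{smooth_f} with $m=1$), $\partial f(z) = \{\nabla f(z)\}$, whence $g^* = \nabla f(z)$, which is exactly the claim.

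The main obstacle is a mild bookkeeping one: the generalized Bregman distance $D_f$ is defined via a specific selection $J_f'(g^0)$ of subgradient, and the optimality condition likewise only produces a particular element of $\partial J_f(g^*)$. One must therefore be careful that the subgradient identified by the optimality condition is precisely the $J_f'(g^*)$ that will be used as the base point in the next iteration's dual Bregman distance; adopting the recursive selection rule described in Subsection~\ref{sec_pre} (which designates $J_f'(g^*) := z$ whenever $g^*$ arises from the prox-mapping) makes the chain of prox-mappings well-defined and lets the Fenchel inversion argument go through cleanly.
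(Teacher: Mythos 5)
Your proposal is correct and follows essentially the same route as the paper's proof (which is deferred to Lemma 1 of \cite{lan2015optimal}): write the first-order optimality condition for the dual prox-mapping, observe that it forces $z=[\tilde x+\tau J_f'(g^0)]/(1+\tau)$ to be a subgradient of $J_f$ at the minimizer $g^*$, and then invert via the conjugate subdifferential identity together with the differentiability of $f$ to get $g^*=\nabla f(z)$. Your closing remark about fixing the recursive subgradient selection $J_f'(g^*):=z$ is exactly the convention the paper adopts in Subsection~\ref{sec_pre}, so the argument is complete.
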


\vgap

Using this result, we can see that the GEM iteration 
can be written a primal-dual form. Given $(x^0,g^{-1}, g^0)\in X\times \mathcal{G}\times \mathcal{G}$, it updates $(x^t,g^t)$ by
	\begin{align}
\tilde g^{t}&=\alpha_t (g^{t-1}-g^{t-2})+ g^{t-1}, \label{def_alt_1}\\
x^{t} &= \Pr_{X} (\tilde g^t, x^{t-1}, \eta_t), \label{def_alt_2}\\
g^{t} &= \Pr_{{\cal G}} (- x^t, g^{t-1}, \tau_t), \label{def_alt_3}
\end{align}
with a specific selection of $J_f'(g^{t-1}) = \underline x^{t-1}$ in $D_f(g^{t-1}, g)$.
Indeed, by denoting $\underline x^0 = x^0$,
we can easily see from $g^0 = \nabla f(\underline x^0)$ that $\underline x^0 \in \partial J_f(g^0)$.
Now assume that $g^{t-1} = \nabla f(\underline x^{t-1})$ and hence that $\underline x^{t-1} \in \partial J_f(g^{t-1})$.
By the definition of $g^t$ in \eqnok{def_alt_3} and Lemma~\ref{lemma_dual_move}, we conclude that
$g^t = \nabla f(\underline x^t)$ with $\underline x^t = (x^t + \tau_t \underline x^{t-1}) / (1 + \tau_t)$,
which are exactly the definitions given in \eqnok{def_yt_al1} and \eqnok{def_yt_al2}.

Recall that in a simple version of the NAG method (e.g., \cite{Nest04,tseng08-1,Lan10-3,GhaLan12-2a,GhaLan13-1,GhaLan13-2}),
given $(x^{t-1}, \bar x^{t-1}) \in X \times X$,
it updates $(x^t, \bar x^t)$ by
\begin{align}
\underline x^t &= (1-\lambda_t) \bar x^{t-1} + \lambda_t x^{t-1},  \label{AG1} \\
g^t& = \nabla f(\underline x^t),  \label{AG2} \\
x^t &= \Pr_X(g^t, x^{t-1}, \eta_t),  \label{AG3} \\
\bar x^t &= (1-\lambda_t) \bar x^{t-1} + \lambda_t x^t, \label{AG4}
\end{align}
for some $\lambda_t \in [0,1]$. Moreover, 
we have shown in \cite{lan2015optimal} that \eqnok{AG1}-\eqnok{AG4} can be viewed as a specific instantiation of
the following primal-dual updates:
\begin{align}
\tilde x^{t}&=\alpha_t (x^{t-1}-x^{t-2})+ x^{t-1}, \label{def_txt}\\
g^{t} &= \Pr_{{\cal G}} (-\tilde x^t, g^{t-1}, \tau_t), \label{def_yt}\\ 
x^{t} &= \Pr_{X} (g^t, x^{t-1}, \eta_t). \label{def_xt}
 \end{align}
 
Comparing \eqnok{def_alt_1}-\eqnok{def_alt_3} with \eqnok{def_txt}-\eqnok{def_xt},
we can clearly see that GEM is a dual version of NAG, obtained by switching the primal and dual 
variables in each equation of \eqnok{def_txt}-\eqnok{def_xt}. The major difference exists in that the extrapolation step in GEM is performed in
the dual space while the one in NAG is performed in the primal space.
In fact, extrapolation in the dual space will help us
to greatly simplify and further enhance the randomized incremental gradient methods developed in \cite{lan2015optimal} based on NAG.
Another interesting fact is that in GEM, the gradients are computed for the output solutions $\{\underline x^t\}$.
On the other hand, the output solutions in the NAG method are given by $\{\bar x^t\}$ while
the gradients are computed for the extrapolation sequence $\{\underline x^t\}$.

\subsection{Convergence of GEM}

Our goal in this subsection is to establish the convergence properties of the GEM method for solving \eqref{cp1}.
Observe that our analysis is carried out completely in the primal space and does not rely on the primal-dual interpretation 
described in the previous section. This type of analysis technique appears to be new for solving problem \eqref{cp1} in the literature as it also differs significantly from that of NAG.

We first establish some general convergence properties for GEM for both smooth convex ($\mu=0$) and strongly convex cases ($\mu>0$). 

\begin{theorem}\label{main_deter_pro}
Suppose that $\{\eta_t\}$, $\{\tau_t\}$, and $\{\alpha_t\}$ in GEM satisfy
	\begin{align}
	\theta_{t-1}&=\alpha_t\theta_t, \ \ t=2,\dots,k,\label{alpha_theta_d}\\
	\theta_t\eta_t&\le \theta_{t-1}(\mu+\eta_{t-1}), \ \ t=2,\dots,k,\label{eta_theta_d}\\
	\theta_t\tau_t&=\theta_{t-1}(1+\tau_{t-1}), \ t=2,\dots,k, \label{tau_theta_d}\\
	\alpha_tL_f&\le \tau_{t-1}\eta_t, \ \ t=2,\dots,k, \label{alpha_tau_eta_d}\\
	2L_f&\le \tau_k(\mu+\eta_k), \label{L_tau_eta_d}
	\end{align}
	for some $\theta_t\ge 0$, $t=1,\dots,k$. Then, for any $k\ge 1$ and any given $x\in X$, we have
	\begin{align}\label{main_bnd_d}
	\theta_k(1+\tau_k)[\psi(\underline x^k)-\psi(x)]+\tfrac{\theta_k(\mu+\eta_k)}{2}P(x^k,x)
	\le \theta_1\tau_1[\psi(x^0)-\psi(x)]+\theta_1\eta_1P(x^0,x).
	\end{align} 
\end{theorem}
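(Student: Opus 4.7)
The plan is to apply the prox-mapping optimality (Lemma~\ref{opt_condi}) to the update~\eqref{def_xt_i}, then telescope weighted per-iteration inequalities using the five parameter conditions. With $q(u)=\langle \tilde g^t,u\rangle$, $\mu_0=0$, $\mu_1=\mu$, $\mu_2=\eta_t$, and $\tilde u=x^{t-1}$, Lemma~\ref{opt_condi} yields
\[
\langle \tilde g^t,x^t-x\rangle + \mu[w(x^t)-w(x)] + \eta_t P(x^{t-1},x^t) + (\mu+\eta_t)P(x^t,x) \le \eta_t P(x^{t-1},x).
\]
Writing $\tilde g^t = g^t + (\tilde g^t - g^t)$ and using the decomposition $x^t-x=(\underline{x}^t-x)+\tau_t(\underline{x}^t-\underline{x}^{t-1})$ that follows from $x^t=(1+\tau_t)\underline{x}^t-\tau_t\underline{x}^{t-1}$, convexity of $f$ gives $\langle g^t,x^t-x\rangle \ge (1+\tau_t)f(\underline{x}^t)-\tau_t f(\underline{x}^{t-1})-f(x)$ and convexity of $w$ gives $w(x^t)\ge (1+\tau_t)w(\underline{x}^t)-\tau_t w(\underline{x}^{t-1})$. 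Combining, I obtain the per-iteration inequality
\[
(1+\tau_t)\psi(\underline{x}^t) - \tau_t\psi(\underline{x}^{t-1}) - \psi(x) + \langle \tilde g^t - g^t,x^t-x\rangle + \eta_t P(x^{t-1},x^t) + (\mu+\eta_t)P(x^t,x) \le \eta_t P(x^{t-1},x).
\]

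Multiplying by $\theta_t$ and summing over $t=1,\ldots,k$: conditions~\eqref{alpha_theta_d} and~\eqref{tau_theta_d} collapse the $\psi$-terms into $\theta_k(1+\tau_k)[\psi(\underline{x}^k)-\psi(x)] - \theta_1\tau_1[\psi(x^0)-\psi(x)]$, and \eqref{eta_theta_d} gives the telescope $\tsum_t \theta_t\eta_t P(x^{t-1},x) - \tsum_t \theta_t(\mu+\eta_t)P(x^t,x) \le \theta_1\eta_1 P(x^0,x) - \theta_k(\mu+\eta_k)P(x^k,x)$. After rearrangement,
\[
\theta_k(1+\tau_k)[\psi(\underline{x}^k)-\psi(x)] + \theta_k(\mu+\eta_k)P(x^k,x) + \mathcal{E} \le \theta_1\tau_1[\psi(x^0)-\psi(x)] + \theta_1\eta_1 P(x^0,x),
\]
where $\mathcal{E}:=\tsum_{t=1}^k \theta_t\langle \tilde g^t-g^t,x^t-x\rangle + \tsum_{t=1}^k \theta_t\eta_t P(x^{t-1},x^t)$, so it remains to show $\mathcal{E}\ge -\tfrac{\theta_k(\mu+\eta_k)}{2}P(x^k,x)$ to conclude~\eqref{main_bnd_d}.

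To control $\mathcal{E}$, I substitute $\tilde g^t - g^t = (g^{t-1}-g^t) + \alpha_t(g^{t-1}-g^{t-2})$. Using~\eqref{alpha_theta_d} ($\theta_t\alpha_t=\theta_{t-1}$) together with $g^{-1}=g^0$ (so the $t=1$ boundary contribution vanishes), summation by parts gives
\[
\tsum_{t=1}^k \theta_t\langle \tilde g^t - g^t, x^t-x\rangle = \tsum_{t=1}^{k-1}\theta_t\langle g^t - g^{t-1}, x^{t+1}-x^t\rangle - \theta_k\langle g^k - g^{k-1}, x^k-x\rangle.
\]
Cauchy--Schwarz combined with the Lipschitz bound $\|g^t-g^{t-1}\|_* \le L_f\|\underline{x}^t-\underline{x}^{t-1}\|$ provides an upper envelope for each cross term; a Young split calibrated by~\eqref{alpha_tau_eta_d} ($\alpha_{t+1}L_f \le \tau_t\eta_{t+1}$) absorbs the $\|x^{t+1}-x^t\|^2$ part into $\theta_{t+1}\eta_{t+1}P(x^t,x^{t+1})\ge (\theta_{t+1}\eta_{t+1}/2)\|x^{t+1}-x^t\|^2$, while the $\|\underline{x}^t-\underline{x}^{t-1}\|^2$ part is re-expressed via $\tau_t(\underline{x}^t-\underline{x}^{t-1})=x^t-\underline{x}^t$ and the recursion $(1+\tau_t)(\underline{x}^t-\underline{x}^{t-1}) = (x^t-x^{t-1}) + \tau_{t-1}(\underline{x}^{t-1}-\underline{x}^{t-2})$, so that after chaining it is paid for by the remaining slack in the $\theta_s\eta_s P(x^{s-1},x^s)$ budget. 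The boundary term $-\theta_k\langle g^k-g^{k-1},x^k-x\rangle$ is treated identically, except that its Young split is calibrated by~\eqref{L_tau_eta_d} ($2L_f\le \tau_k(\mu+\eta_k)$), which yields precisely the $\tfrac{\theta_k(\mu+\eta_k)}{2}P(x^k,x)$ transferred from the left-hand side. The main technical obstacle is this chained absorption: the $\|\underline{x}^t-\underline{x}^{t-1}\|$ quantities do not match directly against the only $\|x^s-x^{s-1}\|$ norms visible in the $P(x^{s-1},x^s)$ terms, so careful bookkeeping across consecutive indices is required, and the five conditions \eqref{alpha_theta_d}--\eqref{L_tau_eta_d} are designed precisely so that the aggregate weight of the residual quantities is nonnegative.
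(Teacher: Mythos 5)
Your setup --- the application of Lemma~\ref{opt_condi} to \eqref{def_xt_i}, the decomposition $x^t-x=(\underline x^t-x)+\tau_t(\underline x^t-\underline x^{t-1})$, the telescoping of the $\psi$- and $P$-terms under \eqref{eta_theta_d} and \eqref{tau_theta_d}, and the summation by parts on $\tsum_t\theta_t\langle\tilde g^t-g^t,x^t-x\rangle$ using \eqref{alpha_theta_d} and $g^{-1}=g^0$ --- all match the paper's proof. The gap is exactly the step you flag as ``the main technical obstacle'' and then do not carry out. You lower-bound $\langle g^t,\underline x^t-\underline x^{t-1}\rangle$ by plain convexity, i.e.\ by $f(\underline x^t)-f(\underline x^{t-1})$, whereas the paper uses the stronger inequality for convex functions with $L_f$-Lipschitz gradients (Theorem 2.1.5 of Nesterov, cited in the proof), which contributes an extra negative term $-\tfrac{\tau_t}{2L_f}\|g^t-g^{t-1}\|_*^2$ to each per-iteration bound. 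That dual-norm quadratic is precisely what gets paired against the cross terms $\theta_t\alpha_t\langle x^t-x^{t-1},g^{t-1}-g^{t-2}\rangle$ and $\theta_k\langle x^k-x,g^k-g^{k-1}\rangle$ via $b\langle u,v\rangle-\tfrac{a}{2}\|v\|^2\le \tfrac{b^2}{2a}\|u\|^2$ (with $v$ the gradient difference measured in $\|\cdot\|_*$), yielding coefficients $\tfrac{\alpha_tL_f}{\tau_{t-1}}-\eta_t$ and $\tfrac{L_f}{\tau_k}-\tfrac{\mu+\eta_k}{2}$ that are nonpositive exactly under \eqref{alpha_tau_eta_d} and \eqref{L_tau_eta_d}. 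No mismatch between primal and dual iterate differences ever arises in the paper's argument.

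Having discarded that term, you must convert everything to primal norms via $\|g^t-g^{t-1}\|_*\le L_f\|\underline x^t-\underline x^{t-1}\|$, and you are then left with residuals in $\|\underline x^t-\underline x^{t-1}\|^2$ that have to be paid for out of the $\theta_s\eta_sP(x^{s-1},x^s)$ budget through the chained recursion $(1+\tau_t)(\underline x^t-\underline x^{t-1})=(x^t-x^{t-1})+\tau_{t-1}(\underline x^{t-1}-\underline x^{t-2})$. You assert, without computation, that the five conditions make the aggregate weights nonnegative; this is not evident, and as stated the budget is not there: the Young splits you calibrate by \eqref{alpha_tau_eta_d} and \eqref{L_tau_eta_d} already consume the full $\|x^{t+1}-x^t\|^2$ and $\|x^k-x\|^2$ allowances (note \eqref{L_tau_eta_d} holds with equality under the smooth stepsize policy \eqref{var_step_1}, so there is no slack at the boundary term), leaving nothing to absorb the chained $\|\underline x^s-\underline x^{s-1}\|^2$ contributions. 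To close the argument, either carry out this bookkeeping with explicitly strengthened parameter conditions, or --- much more simply --- replace the convexity bound on $\langle g^t,\underline x^t-\underline x^{t-1}\rangle$ by $f(\underline x^t)-f(\underline x^{t-1})+\tfrac{1}{2L_f}\|g^t-g^{t-1}\|_*^2$ and run the paper's dual-norm cancellation.
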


\begin{proof}
	Applying Lemma~\ref{opt_condi} to \eqref{def_xt_i}, we obtain
	\beq\label{primal_opt}
	\langle x^t-x, \alpha_t(g^{t-1}-g^{t-2})+g^{t-1}\rangle+\mu w(x^t)-\mu w(x) \le \eta_tP(x^{t-1},x)-(\mu+\eta_t)P(x^t,x)-\eta_tP(x^{t-1},x^t).
	\eeq
	Moreover, using the definition of $\psi$, the convexity of $f$, and the fact that $g^t=\nabla f(\underline x^t)$, we have
	\begin{align*}
	(1+\tau_t)f(\underline x^t)+\mu w(x^t)-\psi(x)
	&\le (1+\tau_t)f(\underline x^t)+\mu w(x^t)-\mu w(x)-[f(\underline x^t)+\la g^t,x-\underline x^t\ra]\\
	& = \tau_t[f(\underline x^t)-\la g^t, \underline x^{t}-\underline x^{t-1}\ra] - \la g^t, x-x^t\ra +\mu w(x^t)-\mu w(x)\\
	& \le -\tfrac{\tau_t}{2L_f}\|g^t-g^{t-1}\|_*^2 +\tau_tf(\underline x^{t-1})- \la g^t, x-x^t\ra +\mu w(x^t)-\mu w(x)\\
	&\le -\tfrac{\tau_t}{2L_f}\|g^t-g^{t-1}\|_*^2 +\tau_tf(\underline x^{t-1}) +\la x^t-x,g^t-g^{t-1}-\alpha_t(g^{t-1}-g^{t-2})\ra \\
	&~~~ +\eta_tP(x^{t-1},x)-(\mu+\eta_t)P(x^t,x)-\eta_tP(x^{t-1},x^t),
	\end{align*}
	where the first equality follows from the definition of $\underline x^t$ in \eqref{def_yt_al1}, the second inequality follows from the smoothness of $f$ (see Theorem 2.1.5 in \cite{Nest04}), and the last inequality follows from \eqref{primal_opt}.
	Multiplying both sides of the above inequality by $\theta_t$, and summing up the resulting inequalities from $t=1$ to $k$, we obtain
	\begin{align}\label{rel_theta}
	&\tsum_{t=1}^k\theta_t(1+\tau_t)f(\underline x^t)+\tsum_{t=1}^k\theta_t[\mu w(x^t)-\psi(x)] 
	\le -\tsum_{t=1}^k\tfrac{\theta_t\tau_t}{2L_f}\|g^t-g^{t-1}\|_*^2+\tsum_{t=1}^k\theta_t\tau_tf(\underline x^{t-1})\nn\\
	&~~~+\tsum_{t=1}^k\theta_t\la x^t-x, g^t-g^{t-1}-\alpha_t(g^{t-1}-g^{t-2})\ra \nn\\
	&~~~+ \tsum_{t=1}^k \theta_t[\eta_tP(x^{t-1},x)-(\mu+\eta_t)P(x^t,x)-\eta_tP(x^{t-1},x^t)].
	\end{align}
	Now by \eqref{alpha_theta_d} and the fact that $g^{-1}=g^0$, we have
	\begin{align*}
	&\tsum_{t=1}^k\theta_t\la x^t-x, g^t-g^{t-1}-\alpha_t(g^{t-1}-g^{t-2})\ra \\
	&= \tsum_{t=1}^k\theta_t[\la x^t-x, g^t-g^{t-1}\ra - \alpha_t\la x^{t-1}-x, g^{t-1}-g^{t-2}\ra]
	      -\tsum_{t=2}^k\theta_t\alpha_t \la x^t-x^{t-1}, g^{t-1}-g^{t-2}\ra\\
	&= \theta_k\la x^k-x, g^k-g^{k-1}\ra-\tsum_{t=2}^k\theta_t\alpha_t \la x^t-x^{t-1}, g^{t-1}-g^{t-2}\ra.
	\end{align*}
	Moreover, in view of \eqref{eta_theta_d}, \eqref{tau_theta_d} and the definition of $\underline x^t$ \eqref{def_yt_al1}, we obtain
	\begin{align*}
	\tsum_{t=1}^k \theta_t[\eta_tP(x^{t-1},x)-(\mu+\eta_t)P(x^t,x)]
	& \varstackrel{\eqref{eta_theta_d}}{\le} \theta_1\eta_1P(x^0,x)-\theta_k(\mu+\eta_k)P(x^k,x),\\
	\tsum_{t=1}^k\theta_t[(1+\tau_t)f(\underline x^t) -\tau_tf(\underline x^{t-1})]
	&\varstackrel{\eqref{tau_theta_d}}{=} \theta_k(1+\tau_k)f(\underline x^k) -\theta_1\tau_1f(\underline x^0),\\
	\tsum_{t=1}^k\theta_t&\varstackrel{\eqref{tau_theta_d}}{=} \tsum_{t=2}^k [\theta_{t}\tau_{t}-\theta_{t-1}\tau_{t-1}]+\theta_k = \theta_k(1+\tau_k)-\theta_1\tau_1, \\
	\theta_k(1+\tau_k)\underline x^k
	&\varstackrel{\eqref{def_yt_al1}}{=}
	\theta_k(x^k+\tfrac{\tau_k}{1+\tau_{k-1}}x^{k-1}+\dots+\tprod_{t=2}^{k}\tfrac{\tau_t}{1+\tau_{t-1}}x^1+\tprod_{t=2}^k\tfrac{\tau_t}{1+\tau_{t-1}}\tau_1x^0)\\
	&\varstackrel{\eqref{tau_theta_d}}{=}\tsum_{t=1}^k\theta_tx^t+\theta_1\tau_1x^0.
	\end{align*}
	The last two relations, in view of the convexity of $w(\cdot)$, also imply that
	\begin{align*}
	\theta_k(1+\tau_k)\mu w(\underline x^k) 
	&\le \tsum_{t=1}^k\theta_t\mu w(x^t)+\theta_1\tau_1\mu w(x^0).
	\end{align*}
	Therefore, by \eqref{rel_theta}, the above relations, and the definition of $\psi$, we conclude that 
	\begin{align}\label{rec_deter}
	\theta_k(1+\tau_k)[\psi(\underline x^k)-\psi(x)]
	&\le \tsum_{t=2}^k\left[-\tfrac{\theta_{t-1}\tau_{t-1}}{2L_f}\|g^{t-1}- g^{t-2}\|_*^2-\theta_t\alpha_t\la x^t-x^{t-1}, g^{t-1}-g^{t-2}\ra-\theta_t\eta_tP(x^{t-1},x^{t})\right]\nn\\
	&~~~-\theta_k\left[\tfrac{\tau_k}{2L_f}\|g^{k}-g^{k-1}\|_*^2-\la x^k-x,g^{k}-g^{k-1}\ra+(\mu+\eta_k)P(x^k,x)\right]+\theta_1\eta_1P(x^0,x)\nn\\
	&~~~ + \theta_1\tau_1[\psi(x^0)-\psi(x)] - \theta_1\eta_1P(x^0,x^1).
	\end{align}
By the strong convexity of $P(\cdot,\cdot)$ in \eqref{P_strong},
 the simple relation that $b\langle u,v\rangle -a\|v\|^2/2\le b^2\|u\|^2/(2a), \ \forall a>0$,
and  the conditions in  \eqref{alpha_tau_eta_d} and \eqref{L_tau_eta_d}, we have 
	\begin{align*}
	&-\tsum_{t=2}^k\left[\tfrac{\theta_{t-1}\tau_{t-1}}{2L_f}\|g^{t-1}- g^{t-2}\|_*^2+\theta_t\alpha_t\la x^t-x^{t-1}, g^{t-1}-g^{t-2}\ra+\theta_t\eta_tP(x^{t-1},x^{t})\right]\\
	& \quad \le \tsum_{t=2}^k\tfrac{\theta_t}{2}\left(\tfrac{\alpha_tL_f}{\tau_{t-1}}-\eta_t\right)\|x^{t-1}-x^t\|^2 \le 0\\
	&-\theta_k\left[\tfrac{\tau_k}{2L_f}\|g^{k}-g^{k-1}\|_*^2-\la x^k-x,g^{k}-g^{k-1}\ra+\tfrac{(\mu+\eta_k)}{2}P(x^k,x)\right]\\
	&\quad \le \tfrac{\theta_k}{2}\left(\tfrac{L_f}{\tau_k}-\tfrac{\mu+\eta_k}{2}\right)\|x^k-x\|^2 \le 0.
	\end{align*}
Using the above relations in \eqref{rec_deter}, we obtain \eqref{main_bnd_d}.
\end{proof}

\vgap

We are now ready to establish the optimal convergence behavior of GEM as a consequence of Theorem~\ref{main_deter_pro}.
We first provide a constant step-size policy which guarantees an optimal linear rate of convergence for the strongly convex case ($\mu > 0$).

\begin{corollary}
	Let $x^*$ be an optimal solution of \eqref{cp}, $x^k$ and $\underline x^k$ be defined in \eqref{def_xt_i}
	and \eqref{def_yt_al1}, respectively. 
	Suppose that  $\mu > 0$, and that $\{\tau_t\}$, $\{\eta_t\}$ and $\{\alpha_t\}$ are set to
	\beq \label{constant_step_s}
	\tau_t \equiv \tau = \sqrt{\tfrac{ 2L_f}{\mu}}, \ \ \ \eta_t \equiv \eta = \sqrt{2 L_f\mu}, \ \ \ \mbox{and} \ \ \ \alpha_t \equiv \alpha =
	\tfrac{ \sqrt{2 L_f/\mu}}{1 + \sqrt{2 L_f/\mu}}, \ \forall t = 1, \ldots, k.
	\eeq 
	Then,
	\begin{align}
	P(x^k,x^*) &\le 2\alpha^k[ P(x^0, x^*) +\tfrac{1}{\mu}(\psi(x^0)-\psi^*)], \label{deter_bnd_s1}\\
	\psi(\underline x^k) - \psi^* &\le \alpha^k \left[\mu P(x^0,x^*)+\psi(x^0)-\psi^*\right].  \label{deter_bnd_s2}
	\end{align}
\end{corollary}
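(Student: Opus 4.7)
The plan is to apply Theorem~\ref{main_deter_pro} with $x = x^*$ after verifying that the prescribed constant step-sizes in \eqref{constant_step_s}, together with an appropriate choice of the weight sequence $\{\theta_t\}$, satisfy all five conditions \eqref{alpha_theta_d}--\eqref{L_tau_eta_d}. Since $\alpha_t \equiv \alpha$ is constant, condition \eqref{alpha_theta_d} forces a geometric sequence, so I would set $\theta_t := \alpha^{1-t}$ (so $\theta_1 = 1$, $\theta_k = \alpha^{1-k}$). The useful algebraic identities to keep in mind throughout the verification are
\[
\tau = \tfrac{\alpha}{1-\alpha}, \qquad 1+\tau = \tfrac{1}{1-\alpha}, \qquad \eta = \mu \tau,
\]
which follow directly from the definitions of $\alpha,\tau,\eta$ in \eqref{constant_step_s}.

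First, I would check \eqref{alpha_theta_d}--\eqref{L_tau_eta_d} one by one: \eqref{alpha_theta_d} is immediate from the choice $\theta_t = \alpha^{1-t}$; \eqref{eta_theta_d} reduces to $\eta \le \alpha(\mu+\eta)/1$, i.e.\ $\eta(1-\alpha) \le \alpha \mu$, which is an equality given $\eta = \mu\alpha/(1-\alpha)$; \eqref{tau_theta_d} reduces to $\tau = \alpha(1+\tau)$, again an equality; \eqref{alpha_tau_eta_d} becomes $\alpha L_f \le \tau \eta = 2L_f$, which holds since $\alpha \in (0,1)$; and \eqref{L_tau_eta_d} reads $2L_f \le \tau(\mu+\eta) = \sqrt{2L_f\mu} + 2L_f$, which holds trivially. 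Thus Theorem~\ref{main_deter_pro} applies and yields
\[
\theta_k(1+\tau)[\psi(\underline x^k)-\psi^*] + \tfrac{\theta_k(\mu+\eta)}{2} P(x^k,x^*)
\le \tau[\psi(x^0)-\psi^*] + \eta\, P(x^0,x^*).
\]

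Next, I would divide through by $\theta_k(1+\tau) = \alpha^{1-k}/(1-\alpha)$ and simplify the right-hand side using the identities above: since $\eta = \mu\tau$, the bracketed quantity $(\mu+\eta)/(1+\tau)$ on the left collapses to $\mu$, and the right-hand side factors as $\tau[\psi(x^0)-\psi^* + \mu P(x^0,x^*)] = \alpha(1+\tau)[\psi(x^0)-\psi^*+\mu P(x^0,x^*)]$. After dividing through by $(1+\tau)$ and multiplying by $\alpha^{k-1}$, this produces the single clean inequality
\[
[\psi(\underline x^k)-\psi^*] + \tfrac{\mu}{2} P(x^k,x^*) \le \alpha^k\bigl[\mu P(x^0,x^*) + \psi(x^0) - \psi^*\bigr].
\]

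Finally, I would read off the two desired bounds from this consolidated inequality: dropping the nonnegative term $\tfrac{\mu}{2}P(x^k,x^*)$ gives \eqref{deter_bnd_s2}, while dropping the nonnegative term $\psi(\underline x^k)-\psi^*$ and multiplying by $2/\mu$ gives \eqref{deter_bnd_s1}. The only nontrivial step is the bookkeeping that collapses $(\mu+\eta)/(1+\tau)$ to $\mu$ and $\tau/(1+\tau)$ to $\alpha$; once those identities are recognized, the corollary follows with essentially no computation. I do not anticipate any real obstacle here, since the step-sizes in \eqref{constant_step_s} are clearly tuned so that several of the conditions in Theorem~\ref{main_deter_pro} hold with equality.
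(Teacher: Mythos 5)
Your proposal is correct and follows essentially the same route as the paper: choose a geometric weight sequence $\theta_t$ proportional to $\alpha^{-t}$, verify conditions \eqref{alpha_theta_d}--\eqref{L_tau_eta_d} (several with equality), invoke Theorem~\ref{main_deter_pro} at $x=x^*$, and use the identities $\eta=\mu\tau$ and $\tau/(1+\tau)=\alpha$ to collapse the bound into $\psi(\underline x^k)-\psi^*+\tfrac{\mu}{2}P(x^k,x^*)\le\alpha^k[\mu P(x^0,x^*)+\psi(x^0)-\psi^*]$, from which both estimates follow by dropping one nonnegative term. All the algebraic verifications you carried out are accurate, and your normalization $\theta_t=\alpha^{1-t}$ differs from the paper's $\theta_t=\alpha^{-t}$ only by an irrelevant constant factor.
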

\begin{proof}
	Let us set $\theta_t = \alpha^{-t}, \ t=1,\dots,k$. It is easy to check that the selection of $\{\tau_t\}, \{\eta_t\}$ and $\{\alpha_t\}$
	in \eqref{constant_step_s} satisfies 
	conditions \eqref{alpha_theta_d}-\eqref{L_tau_eta_d}. In view of Theorem~\ref{main_deter_pro} and \eqref{constant_step_s}, we have
	\begin{align*}
	\psi(\underline x^k)-\psi(x^*)+\tfrac{\mu+\eta}{2(1+\tau)}P(x^k,x^*)
	&\le \tfrac{\theta_1\tau}{\theta_k(1+\tau)}[\psi(x^0)-\psi(x^*)] + \tfrac{\theta_1\eta}{\theta_k(1+\tau)} P(x^0,x^*)\\
	& = \alpha^k[\psi(x^0)-\psi(x^*)+\mu P(x^0,x^*)].
	\end{align*}
	It also follows from the above relation, the fact $\psi(\underline x^k)-\psi(x^*)\ge 0$, and \eqref{constant_step_s} that
	\[
	P(x^k,x^*)\le \tfrac{2(1+\tau)\alpha^k}{\mu+\eta}[\mu P(x^0,x^*)+\psi(x^0)-\psi(x^*)]= 2\alpha^k[P(x^0,x^*)+\tfrac{1}{\mu}(\psi(x^0)-\psi(x^*))].
	\]
\end{proof}

\vgap

We now provide a stepsize policy which guarantees the optimal rate of
convergence for the smooth case ($\mu = 0$).
Observe that in smooth case we can estimate the solution quality for the sequence $\{\underline x^k\}$ only.

\begin{corollary}\label{main_deter_sm}
	Let $x^*$ be an optimal solution of \eqref{cp}, and $\underline x^k$ be defined in \eqref{def_yt_al1}. Suppose that $\mu=0$, and that $\{\tau_t\}$, $\{\eta_t\}$ and $\{\alpha_t\}$ are set to
	\beq \label{var_step_1}
	\tau_t =\tfrac{t}{2}, \ \ \ \eta_t = \tfrac{4 L_f}{t},  \ \ \ \mbox{and} \ \ \ \alpha_t = \tfrac{t}{t+1}, \ \forall t = 1, \ldots,k.
	\eeq
	Then,
	\beq  \label{deter_bnd_1}
	\psi(\underline x^k) - \psi(x^*) = f(\underline{x}^k)-f(x^*) \le \tfrac{2}{(k+1)(k+2)}[f(x^0)-f(x^*)+8L_f P(x^0,x^*)]. 
	\eeq
\end{corollary}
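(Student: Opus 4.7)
The plan is to invoke Theorem~\ref{main_deter_pro} (the general convergence estimate for GEM) with the step-size choice \eqref{var_step_1}, after choosing appropriate weights $\theta_t$, and then verify that all five conditions \eqref{alpha_theta_d}--\eqref{L_tau_eta_d} hold with $\mu=0$. The correct choice of weights is dictated by the denominator $(k+1)(k+2)$ on the right-hand side of \eqref{deter_bnd_1}: since $1+\tau_k=(k+2)/2$, we want $\theta_k$ proportional to $k+1$, which suggests trying $\theta_t := t+1$ for $t=1,\dots,k$.

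With this choice I would carry out the verifications in order. First, \eqref{alpha_theta_d} reads $t = \tfrac{t}{t+1}(t+1)$, which is immediate. Second, with $\mu=0$, condition \eqref{eta_theta_d} becomes $(t+1)\cdot\tfrac{4L_f}{t}\le t\cdot \tfrac{4L_f}{t-1}$, i.e.\ $t^2-1\le t^2$, which holds. Third, \eqref{tau_theta_d} is $\tfrac{(t+1)t}{2}=\tfrac{t(t+1)}{2}$, trivially true. Fourth, \eqref{alpha_tau_eta_d} requires $\tfrac{tL_f}{t+1}\le \tfrac{t-1}{2}\cdot \tfrac{4L_f}{t}$, i.e.\ $t^2\le 2(t^2-1)$, which holds for $t\ge 2$; this is precisely the range over which \eqref{alpha_tau_eta_d} is imposed. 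Finally, \eqref{L_tau_eta_d} becomes $2L_f\le \tfrac{k}{2}\cdot \tfrac{4L_f}{k}=2L_f$, which holds with equality.

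Having verified the hypotheses of Theorem~\ref{main_deter_pro}, I would plug everything into \eqref{main_bnd_d} at $x=x^*$. With $\mu=0$ the term $\tfrac{\theta_k(\mu+\eta_k)}{2}P(x^k,x^*)$ is still nonnegative, so it can be dropped from the left-hand side. Using $\theta_k(1+\tau_k)=(k+1)(k+2)/2$ on the left, together with $\theta_1\tau_1=2\cdot\tfrac12=1$ and $\theta_1\eta_1=2\cdot 4L_f=8L_f$ on the right, yields
\[
\tfrac{(k+1)(k+2)}{2}[\psi(\underline x^k)-\psi(x^*)]
\le [\psi(x^0)-\psi(x^*)] + 8L_f P(x^0,x^*).
\]
Rearranging and using $\psi=f$ (since $\mu=0$) gives \eqref{deter_bnd_1}.

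There is no real obstacle here; the whole argument is a bookkeeping exercise once $\theta_t=t+1$ is identified as the right weighting. The only mildly delicate point is that \eqref{alpha_tau_eta_d} holds only for $t\ge 2$, but the hypothesis of Theorem~\ref{main_deter_pro} imposes it precisely in that range, so no issue arises. The saturation at equality in \eqref{L_tau_eta_d} is a useful sanity check confirming that the constants $1/2$ and $4$ in \eqref{var_step_1} are chosen to make the bound as tight as possible.
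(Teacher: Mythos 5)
Your proof is correct and follows exactly the paper's route: set $\theta_t=t+1$, verify conditions \eqref{alpha_theta_d}--\eqref{L_tau_eta_d} for the step-sizes in \eqref{var_step_1}, and specialize \eqref{main_bnd_d} at $x=x^*$ with $\mu=0$. The paper merely states the verification is "easy to check," while you carry it out explicitly; the computations and the final constants $\theta_1\tau_1=1$, $\theta_1\eta_1=8L_f$, $\theta_k(1+\tau_k)=(k+1)(k+2)/2$ all match.
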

\begin{proof}
	Let us set $\theta_t = t+1$, $t=1,\ldots,k$. It is easy to check that the parameters in \eqref{var_step_1} satisfy 
	conditions \eqref{alpha_tau_eta_d}-\eqref{L_tau_eta_d}. In view of \eqref{main_bnd_d} and \eqref{var_step_1}, we conclude that
	\[
	\psi(\underline x^k)- \psi(x^*) \le \tfrac{2}{(k+1)(k+2)}[\psi(x^0)-\psi(x^*)+8L_f P(x^0,x^*)].
	\]
\end{proof}

\vgap
In Corollary~\ref{main_deter_sm1}, we improve the above complexity result in terms of the dependence on $f(x^0)-f(x^*)$ by using a different step-size policy and a slightly more involved analysis for the smooth case ($\mu=0$).

\vgap
\begin{corollary} \label{main_deter_sm1}
	Let $x^*$ be an optimal solution of \eqref{cp}, $x^k$ and $\underline x^k$ be defined in \eqref{def_xt_i}
	and \eqref{def_yt_al1}, respectively.
		Suppose that $\mu=0$, and that $\{\tau_t\}$, $\{\eta_t\}$ and $\{\alpha_t\}$ are set to
		\beq \label{var_step}
		\tau_t =\tfrac{t-1}{2}, \ \ \ \eta_t = \tfrac{6 L_f}{t},  \ \ \ \mbox{and} \ \ \ \alpha_t = \tfrac{t-1}{t}, \ \forall t = 1, \ldots,k.
		\eeq
		Then, for any $k\ge 1$,
		\begin{align}
		\psi(\underline x^k) - \psi(x^*) &= f(\underline{x}^k)-f(x^*) \le \tfrac{12L_f}{ k(k+1)} P(x^0, x^*).  \label{deter_bnd}
	\end{align}
\end{corollary}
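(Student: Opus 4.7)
My plan is to mirror the proof of Theorem~\ref{main_deter_pro} with $\theta_t = t$ for $t = 1,\ldots,k$, and then pay careful attention at $t=2$ where the stepsize policy \eqref{var_step} fails to satisfy one of the hypotheses of the theorem. With $\theta_t = t$, a direct calculation shows the parameters in \eqref{var_step} satisfy \eqref{alpha_theta_d} (both sides equal $t-1$), \eqref{eta_theta_d} with $\mu = 0$ (both sides equal $6L_f$), and \eqref{tau_theta_d} (both sides equal $t(t-1)/2$). The key structural gain from the new policy is that $\tau_1 = 0$, which forces the $\theta_1\tau_1[\psi(x^0) - \psi(x)]$ contribution on the right-hand side of \eqref{main_bnd_d} to vanish entirely; this is precisely what removes the $f(x^0) - f(x^*)$ term from \eqref{deter_bnd}.

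The main obstacle is that condition \eqref{alpha_tau_eta_d}, i.e., $\alpha_t L_f \le \tau_{t-1}\eta_t$, holds only for $t \ge 3$ under \eqref{var_step}: for $t \ge 3$ one checks $t - 1 \le 3(t-2)$, but at $t = 2$ the right-hand side is $0$ while the left is $L_f/2$, so the standard absorption of the cross term into $\theta_t\eta_t P(x^{t-1},x^t)$ breaks. I would handle $t = 2$ by hand, using the fact that $\tau_1 = 0$ implies $\underline x^1 = x^1$, hence $\|g^1 - g^0\|_* \le L_f\|x^1 - x^0\|$. Writing the $t = 2$ contribution to \eqref{rec_deter} as
\[
-\langle x^2 - x^1, g^1 - g^0\rangle - 6L_f\, P(x^1, x^2) \le L_f\|x^2 - x^1\|\,\|x^1 - x^0\| - 3L_f\|x^2 - x^1\|^2,
\]
and applying Young's inequality with a convenient constant, one obtains a residual of the form $c L_f\|x^1 - x^0\|^2$ for some small $c$. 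This residual is then absorbed into the (so far unused) term $-\theta_1\eta_1 P(x^0, x^1) \le -3L_f\|x^1 - x^0\|^2$ appearing at the end of \eqref{rec_deter}. A similar mild relaxation handles the terminal $t = k$ cross term: rather than splitting $(\mu + \eta_k)P(x^k, x)$ in half as in the original proof (which requires \eqref{L_tau_eta_d}), I would keep the full term to cancel the cross piece, which only needs $L_f \le \tau_k\eta_k$, valid for all $k \ge 2$.

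After these adjustments, the analog of \eqref{main_bnd_d} reads $\theta_k(1 + \tau_k)[\psi(\underline x^k) - \psi(x^*)] \le \theta_1 \eta_1 P(x^0, x^*)$, into which I substitute $\theta_k(1 + \tau_k) = k(k+1)/2$ and $\theta_1\eta_1 = 6L_f$ to obtain exactly \eqref{deter_bnd}. The edge case $k = 1$ is handled directly: with $\alpha_1 = 0$ and $\eta_1 = 6L_f$, one step of the prox-mapping combined with the descent lemma $f(x^1) \le f(x^0) + \langle\nabla f(x^0), x^1 - x^0\rangle + L_f P(x^0, x^1)$ and the three-point property from Lemma~\ref{opt_condi} yields $f(x^1) - f(x^*) \le 6L_f P(x^0, x^*)$, which matches \eqref{deter_bnd} at $k = 1$. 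The hard part of the argument is therefore really confined to the $t = 2$ cross term; once that is absorbed into $\theta_1\eta_1 P(x^0, x^1)$, everything else proceeds as in the proof of Theorem~\ref{main_deter_pro}.
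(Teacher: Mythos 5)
Your proposal is correct and follows essentially the same route as the paper's proof: take $\theta_t=t$, observe that \eqref{alpha_tau_eta_d} fails only at $t=2$, and absorb that cross term into $-\theta_1\eta_1 P(x^0,x^1)$ using $\tau_1=0\Rightarrow\underline x^1=x^1$ and $\|g^1-g^0\|_*\le L_f\|x^1-x^0\|$, exactly as in the paper's derivation from \eqref{rec_deter}. Your handling of the terminal cross term (keeping the full $\eta_k P(x^k,x)$ so that only $L_f\le\tau_k\eta_k$ is needed) and your explicit $k=1$ check are slightly more careful than the paper, which tacitly assumes \eqref{L_tau_eta_d} even though it fails for $k\le 2$ under \eqref{var_step}, but this does not change the substance of the argument.
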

\begin{proof}
	If we set $\theta_t = t,\ t=1,\dots,k$. It is easy to check that 
	the parameters in \eqref{var_step} satisfy conditions \eqref{alpha_theta_d}-\eqref{tau_theta_d} and \eqref{L_tau_eta_d}. However,
	condition \eqref{alpha_tau_eta_d} only holds for $t= 3,\ldots,k$, i.e.,
	\beq\label{alpha_tau_eta_dsm}
	\alpha_tL_f\le \tau_{t-1}\eta_t, \ \ t=3,\dots,k.
	\eeq
	In view of \eqref{rec_deter} and the fact that $\tau_1=0$, we have
	\begin{align*}
	\theta_k(1+\tau_k)[\psi(\underline x^k)-\psi(x)]
	&\le -\theta_2[\alpha_2\la x^2-x^1,g^1-g^0\ra + \eta_2P(x^1,x^2)]- \theta_1\eta_1P(x^0,x^1)\nn\\
	&~~~- \tsum_{t=3}^k\left[\tfrac{\theta_{t-1}\tau_{t-1}}{2L_f}\|g^{t-1}- g^{t-2}\|_*^2 +\theta_t\alpha_t\la x^t-x^{t-1}, g^{t-1}-g^{t-2}\ra+\theta_t\eta_tP(x^{t-1},x^{t})\right]\nn\\
	&~~~-\theta_k\left[\tfrac{\tau_k}{2L_f}\|g^{k}-g^{k-1}\|_*^2-\la x^k-x,g^{k}-g^{k-1}\ra+(\mu+\eta_k)P(x^k,x)\right]+\theta_1\eta_1P(x^0,x)\nn\\
	&\le \tfrac{\theta_1\alpha_2}{2\eta_2}\|g^1-g^0\|_*^2 -\tfrac{\theta_1\eta_1}{2}\|x^1-x^0\|^2
	+\tsum_{t=3}^k\tfrac{\theta_t}{2}\left(\tfrac{\alpha_tL_f}{\tau_{t-1}}-\eta_t\right)\|x^{t-1}-x^t\|^2\nn\\
	&\quad +\tfrac{\theta_k}{2}\left(\tfrac{L_f}{\tau_k}-\tfrac{\eta_k}{2}\right)\|x^k-x\|^2+\theta_1\eta_1P(x^0,x)-\tfrac{\theta_k\eta_k}{2}P(x^k,x)\nn\\ 
	&\le \tfrac{\theta_1\alpha_2L_f^2}{2\eta_2}\|\underline x^1-\underline x^0\|^2-\tfrac{\theta_1\eta_1}{2}\|x^1-x^0\|^2+\theta_1\eta_1P(x^0,x)-\tfrac{\theta_k\eta_k}{2}P(x^k,x)\nn\\
	&\le \theta_1 \left(\tfrac{\alpha_2L_f^2}{2\eta_2}-\eta_1\right)\|x^1-x^0\|^2+\theta_1\eta_1P(x^0,x)-\tfrac{\theta_k\eta_k}{2}P(x^k,x),
	\end{align*}
	where the second inequality follows from the simple relation that $b\langle u,v\rangle -a\|v\|^2/2\le b^2\|u\|^2/(2a), \ \forall a>0$ and \eqref{P_strong}, the third inequality follows from \eqref{alpha_tau_eta_dsm}, \eqref{L_tau_eta_d}, the definition of $g^t$ in \eqref{def_yt_al2} and \eqref{smooth_f}, and the last inequality follows from the facts that $\underline x^0=x^0$ and $\underline x^1 = x^1$ (due to $\tau_1=0$). Therefore, by plugging the parameter setting in \eqref{var_step} into the above inequality, we conclude that 
	\begin{align*}
	\psi(\underline x^k)-\psi^*=f(\underline x^k)-f(x^*)
	&\le [\theta_k(1+\tau_k)]^{-1}[\theta_1\eta_1P(x^0,x^*)-\tfrac{\theta_k\eta_k}{2}P(x^k,x)]
	\le \tfrac{12L_f}{k(k+1)}P(x^0,x^*).
	\end{align*}
\end{proof}

\vgap
In view of the results obtained in the above three corollaries, 
GEM exhibits optimal rates of convergence for both strongly convex and smooth cases.
Different from the classical NAG method, GEM performs extrapolation on the gradients, rather than the iterates.
This fact will help us to develop an enhanced randomized
incremental gradient method than RPDG in \cite{lan2015optimal}, i.e., the Random Gradient Extrapolation Method, 
with a much simpler analysis.

\setcounter{equation}{0}
\section{Convergence analysis of RGEM}\label{sec_conv}
Our main goal in this section is to establish the convergence properties of RGEM for solving \eqref{cp} and \eqref{sp}, i.e., the main results stated in Theorem~\ref{main_ran_sc} and~\ref{main_ran_sto}. 
In fact, comparing RGEM in Algorithm~\ref{alg_rpaged} with GEM in Algorithm~\ref{alg_paged}, RGEM is a direct randomization of GEM. Therefore, inheriting from GEM, its convergence analysis is carried out completely in the primal space.
However, the analysis for RGEM is more challenging especially because we need to 1) build up the relationship between $\tfrac{1}{m}\tsum_{i=1}^mf_i(\underline x_i^k)$ and $f(\underline x^k)$, for which we exploit the function $Q$ defined in \eqref{def_Q} as an intermediate tool; 2) bound the error caused by inexact gradients at the initial point and 3) analyze the accumulated error caused by randomization and noisy stochastic gradients.

Before proving Theorem~\ref{main_ran_sc} and ~\ref{main_ran_sto}, we first need to provide some important technical results.
The following simple result demonstrates a few identities related to $\underline x_i^t$ (cf. \eqref{def_underlx}) and $y^t$ (cf. \eqref{def_yt1} or \eqref{def_yt1_sto}).

\vgap
\begin{lemma}\label{tech_exp}
	Let $x^t$ and $y^t$ be defined in \eqnok{def_xt1} and \eqnok{def_yt1} (or \eqref{def_yt1_sto}), respectively, and $\hat{\underline x}_i^t$ and $\hat y^t$ be defined as 
	\begin{align}
	\hat{\underline x}_i^t&=(1+\tau_t)^{-1}(x^t+\tau_t\underline x_i^{t-1}), \ i=1,\dots,m, \  t\ge 1,\label{def_hxt}\\
	\hat{y}_i^t&=
	\begin{cases}
	\nabla f_i(\hat{\underline x}_i^t), \ &\text{if $y^t$ is defined in \eqref{def_yt1}},\\
	\tfrac{1}{B_t}\tsum_{j=1}^{B_t}G_i(\hat {\underline x}_i^t,\xi_{i,j}^t), \ &\text{if $y^t$ is defined in \eqref{def_yt1_sto}},  
	\end{cases} \ i=1,\dots,m, \  t\ge 1, \label{def_hyt}
	\end{align}
	respectively. Then we have, for any $i = 1, \ldots, m$ and $t = 1, \ldots, k$,
	\begin{align*}
	\bbe_t[y_i^t] &= \tfrac{1}{m}\hat y_i^t +(1-\tfrac{1}{m})y_i^{t-1},\\
	\bbe_t[\underline x_i^t] &= \tfrac{1}{m} \hat{\underline x}_i^{t} + (1-\tfrac{1}{m})\underline x_i^{t-1},\\
	\bbe_t[f_i(\underline x_i^t)] &= \tfrac{1}{m} f_i(\hat{\underline x}_i^{t}) + (1-\tfrac{1}{m}) f_i(\underline x_i^{t-1}),\\
	\bbe_t[\|\nabla f_i({\underline x}^{t}_i)-\nabla f_i(\underline x_i^{t-1})\|_*^2] &= \tfrac{1}{m}\|\nabla f_i(\hat{\underline x}^{t}_i)-\nabla f_i(\underline x_{i}^{t-1})\|_*^2, 
	\end{align*}
	where $\bbe_t$ denotes the conditional expectation w.r.t. $i_t$ given $i_1, \ldots, i_{t-1}$ when $y^t$ is defined in \eqref{def_yt1}, and w.r.t. $i_t$ given $i_1, \ldots, i_{t-1}, \xi_1^t,\ldots,\xi_m^t$ when $y^t$ is defined in \eqref{def_yt1_sto}, respectively.
\end{lemma}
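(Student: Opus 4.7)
The proof should reduce to a one-step case analysis on the event $\{i_t=i\}$ versus $\{i_t\neq i\}$. The key observation is that the hatted quantities $\hat{\underline x}_i^t$ and $\hat y_i^t$ are precisely the values that $\underline x_i^t$ and $y_i^t$ would take \emph{if} the random index $i_t$ happened to equal $i$: indeed, comparing \eqref{def_hxt} with \eqref{def_underlx}, we see $\underline x_i^t = \hat{\underline x}_i^t$ on $\{i_t=i\}$, and plugging this equality into \eqref{def_yt1} (resp.\ \eqref{def_yt1_sto}) gives $y_i^t=\hat y_i^t$ on $\{i_t=i\}$. On the complementary event $\{i_t\neq i\}$, both definitions collapse to the previous iterate: $\underline x_i^t=\underline x_i^{t-1}$ and $y_i^t=y_i^{t-1}$.

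Given this, I would handle each of the four identities in order. Since $\Prob\{i_t=i\}=1/m$ and, under the conditioning specified, everything in sight (including the $\xi_{i,j}^t$ in the stochastic case and all past iterates $\underline x_i^{t-1}, y_i^{t-1}$) is measurable with respect to the conditioning $\sigma$-algebra, I would simply write
\[
\bbe_t[y_i^t] \;=\; \Prob\{i_t=i\}\,\hat y_i^t + \Prob\{i_t\neq i\}\,y_i^{t-1} \;=\; \tfrac{1}{m}\hat y_i^t + (1-\tfrac{1}{m})y_i^{t-1},
\]
and the identities for $\bbe_t[\underline x_i^t]$ and $\bbe_t[f_i(\underline x_i^t)]$ follow by the same two-line decomposition (noting that $f_i(\underline x_i^t)$ equals $f_i(\hat{\underline x}_i^t)$ on $\{i_t=i\}$ and $f_i(\underline x_i^{t-1})$ otherwise).

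For the fourth identity, the key point is that on $\{i_t\neq i\}$ the difference $\nabla f_i(\underline x_i^t)-\nabla f_i(\underline x_i^{t-1})$ is identically zero, so the squared-norm expectation reduces to the single term contributed by $\{i_t=i\}$, yielding the factor $1/m$ in front of $\|\nabla f_i(\hat{\underline x}_i^t)-\nabla f_i(\underline x_i^{t-1})\|_*^2$. The only thing to be careful about is the distinction between the two conditioning rules for $\bbe_t$: in the deterministic case it is enough to condition on $i_1,\ldots,i_{t-1}$ because $y_i^t$ depends on no fresh randomness beyond $i_t$, whereas in the stochastic case we must also condition on the newly sampled $\xi_1^t,\ldots,\xi_m^t$ so that the randomness in $G_i(\hat{\underline x}_i^t,\xi_{i,j}^t)$ is frozen and the identity reduces again to a pure Bernoulli average over $i_t$. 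There is no real obstacle here — the whole lemma is essentially bookkeeping — but one should explicitly record that the conditioning $\sigma$-algebra indeed renders $\hat{\underline x}_i^t,\hat y_i^t,\underline x_i^{t-1},y_i^{t-1}$ deterministic, which is what makes the Bernoulli decomposition valid.
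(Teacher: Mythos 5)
Your proposal is correct and matches the paper's own argument: the paper likewise observes that $\Prob_t\{y_i^t=\hat y_i^t\}=\Prob_t\{i_t=i\}=\tfrac{1}{m}$ and $\Prob_t\{y_i^t=y_i^{t-1}\}=1-\tfrac{1}{m}$ under the stated conditioning and derives all four identities from this two-case decomposition. Your write-up simply spells out the measurability bookkeeping and the vanishing of $\nabla f_i(\underline x_i^t)-\nabla f_i(\underline x_i^{t-1})$ on $\{i_t\neq i\}$ that the paper leaves implicit.
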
 

\begin{proof}
	This first equality follows immediately from
	the facts that $\Prob_t \{y_i^t = \hat y_i^t\} = \Prob_t\{ i_t = i\}
	= \tfrac{1}{m}$ and $\Prob_t \{y_i^t = y_i^{t-1}\} = 1 - \tfrac{1}{m}$. Here $\Prob_t$ denotes the
	conditional probability w.r.t. $i_t$ given $i_1, \ldots, i_{t-1}$ when $y^t$ is defined in \eqref{def_yt1} and w.r.t $i_t$ given $i_1, \ldots, i_{t-1}, \xi_1^t,\ldots,\xi_m^t$ when $y^t$ is defined in \eqref{def_yt1_sto}, respectively. 
	Similarly, we can prove the rest equalities.
\end{proof}

\vgap
We define the following function $Q$ to help us analyze the convergence properties of RGEM. Let $\underline x, x\in X$ be two feasible solutions of \eqref{cp} (or \eqref{sp}), we define the corresponding $Q(\underline x, x)$ by
\beq\label{def_Q}
Q(\underline x, x):= \langle \nabla f(x), \underline x -x\rangle + \mu w(\underline x) -\mu w(x).
\eeq
It is obvious that if we fix $x= x^*$, an optimal solution of \eqref{cp} (or \eqref{sp}), by the convexity of $w$ and the optimality condition of $x^*$, for any feasible solution $\underline x$, we can conclude that 
\begin{align*}
Q(\underline{x}, x^*)
&\ge \langle \nabla f(x^*) +\mu w'(x^*), \underline x -x^*\rangle  \ge 0.
\end{align*}
Moreover, observing that $f$ is smooth, we conclude that
\begin{align}\label{psi_Q}
Q(\underline{x},x^*)
= f(x^*) + \langle \nabla f(x^*), \underline{x} - x^*\rangle + \mu w(\underline{x}) -\psi(x^*)
\ge - \tfrac{L_f}{2}\|\underline{x} -x^*\|^2 + \psi(\underline{x})- \psi(x^*).
\end{align}
The following lemma establishes an important relationship regarding $Q$. 

\vgap
\begin{lemma}\label{tech1_Q}
Let $x^t$ be defined in \eqref{def_xt1}, and $x \in X$ be any feasible solution of \eqref{cp} or \eqref{sp}. Suppose that $\tau_t$ in RGEM satisfy
\beq\label{tau_theta}
\theta_t(m(1+\tau_t)-1)=\theta_{t-1}m(1+\tau_{t-1}), \ \ t=2,\dots,k,
\eeq
for some $\theta_t\ge 0, \ t = 1,\ldots, k$. Then, we have
\begin{align}\label{rel_Q}
\tsum_{t=1}^k \theta_t\bbe[ Q(x^t,x)] 
&\le \theta_k(1+\tau_k)\tsum_{i=1}^m \bbe[f_i(\underline x_i^k)] + \tsum_{t=1}^k\theta_t\bbe[\mu w(x^t)-\psi(x)]\nn\\
&\quad-\theta_1(m(1+\tau_1)-1)[\langle x^0- x, \nabla f(x)\rangle +f(x)]. 
\end{align}
\end{lemma}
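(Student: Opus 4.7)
The plan is to rewrite $Q$ via an affine surrogate of $f$, then to telescope against the agent-wise iterates $\underline x_i^t$, invoking convexity only at the very last step. Define the individual surrogates $\phi_x^{(i)}(y) := f_i(x) + \langle \nabla f_i(x), y - x\rangle$ and their average $\phi_x(y) := \tfrac{1}{m}\tsum_{i=1}^m\phi_x^{(i)}(y) = f(x) + \langle \nabla f(x), y - x\rangle$; both are affine in $y$. The elementary identity
\[ Q(x^t, x) - [\mu w(x^t) - \psi(x)] = \phi_x(x^t), \qquad \phi_x(x^0) = \langle x^0 - x, \nabla f(x)\rangle + f(x), \]
reduces the claim \eqref{rel_Q} to the equivalent inequality
\[ \tsum_{t=1}^k \theta_t \bbe[\phi_x(x^t)] + \theta_1(m(1+\tau_1) - 1)\phi_x(x^0) \le \theta_k(1+\tau_k)\, \bbe[\tsum_{i=1}^m f_i(\underline x_i^k)]. \]

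Since $\phi_x^{(i)}$ is affine and $\hat{\underline x}_i^t = (1+\tau_t)^{-1}(x^t + \tau_t \underline x_i^{t-1})$ by \eqref{def_hxt}, we obtain the \emph{exact equality}
\[ (1+\tau_t)\phi_x^{(i)}(\hat{\underline x}_i^t) = \phi_x^{(i)}(x^t) + \tau_t \phi_x^{(i)}(\underline x_i^{t-1}). \]
The expectation identity $\bbe_t[\underline x_i^t] = \tfrac{1}{m}\hat{\underline x}_i^t + (1-\tfrac{1}{m})\underline x_i^{t-1}$ from Lemma~\ref{tech_exp}, combined with the affinity of $\phi_x^{(i)}$, gives $\bbe_t[\phi_x^{(i)}(\underline x_i^t)] = \tfrac{1}{m}\phi_x^{(i)}(\hat{\underline x}_i^t) + (1-\tfrac{1}{m})\phi_x^{(i)}(\underline x_i^{t-1})$, and substituting the preceding equality produces
\[ m(1+\tau_t)\bbe_t[\phi_x^{(i)}(\underline x_i^t)] = \phi_x^{(i)}(x^t) + [m(1+\tau_t) - 1]\phi_x^{(i)}(\underline x_i^{t-1}). \]
Summing over $i$, using $\tsum_i\phi_x^{(i)}(x^t) = m\phi_x(x^t)$, multiplying by $\theta_t/m$, and invoking hypothesis \eqref{tau_theta} (so that $\theta_t[m(1+\tau_t) - 1]/m = \theta_{t-1}(1+\tau_{t-1})$) yields the telescoping form
\[ \theta_t(1+\tau_t)\bbe_t[\tsum_i\phi_x^{(i)}(\underline x_i^t)] = \theta_t\phi_x(x^t) + \theta_{t-1}(1+\tau_{t-1})\tsum_i\phi_x^{(i)}(\underline x_i^{t-1}), \quad t \ge 2. \]

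Taking total expectations and telescoping from $t=2$ to $k$ collapses the history terms; the $t=1$ base case is handled directly by applying the recursion with $\underline x_i^0 = x^0$, which (via $\tsum_i\phi_x^{(i)}(x^0) = m\phi_x(x^0)$) isolates the boundary contribution $\theta_1(m(1+\tau_1) - 1)\phi_x(x^0)$. The outcome is the clean identity
\[ \theta_k(1+\tau_k)\bbe[\tsum_i\phi_x^{(i)}(\underline x_i^k)] = \tsum_{t=1}^k \theta_t\bbe[\phi_x(x^t)] + \theta_1(m(1+\tau_1) - 1)\phi_x(x^0). \]
A single invocation of convexity, namely $f_i(\underline x_i^k) \ge \phi_x^{(i)}(\underline x_i^k)$ summed over $i$, weakens the left-hand side to $\theta_k(1+\tau_k)\bbe[\tsum_i f_i(\underline x_i^k)]$, yielding precisely the reformulation of \eqref{rel_Q} established in the first paragraph.

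The main subtlety, and the reason the telescoping closes without leakage, is the decision to telescope against the affine surrogates $\phi_x^{(i)}$ rather than the $f_i$'s directly: affinity converts both the convex combination defining $\hat{\underline x}_i^t$ and the randomized update of $\underline x_i^t$ into exact equalities, so no Jensen-type slack accumulates across the $k-1$ telescoped steps. The single use of $f_i \ge \phi_x^{(i)}$ at the terminal iterate then supplies exactly the direction of inequality required by \eqref{rel_Q}.
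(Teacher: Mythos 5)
Your proposal is correct and follows essentially the same route as the paper's proof: decompose $Q(x^t,x)$ through the convex-combination identity for $\hat{\underline x}_i^t$, pass to expectations via Lemma~\ref{tech_exp}, telescope using \eqref{tau_theta}, and invoke convexity of $f_i$ once at the terminal iterate. Packaging the linear terms as affine surrogates $\phi_x^{(i)}$ is a clean way to organize the same bookkeeping (the paper instead telescopes $\langle \underline x_i^t - x, \nabla f_i(x)\rangle$ directly and accounts for the $f_i(x)$ constants via the identity \eqref{sum_theta}), but the underlying argument is identical.
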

\begin{proof}
In view of the definition of $Q$ in \eqref{def_Q}, we have
\begin{align}
Q(x^t, x)
& = \tfrac{1}{m}\tsum_{i=1}^m\langle \nabla f_i(x), x^t -x\rangle + \mu w(x^t)- \mu w(x)\nn\\
&\varstackrel{\eqref{def_hxt}}{=} \tfrac{1}{m}\tsum_{i=1}^m [(1+\tau_t)\langle \underline{\hat x}_i^t -x,\nabla f_i(x)\rangle -\tau_t \langle \underline{x}_{i}^{t-1} - x,\nabla f_i(x)\rangle] + \mu w(x^t) -\mu w(x). \nn
\end{align}
Taking expectation on both sides of the above relation over $\{i_1, \ldots, i_k\}$, and using Lemma~\ref{tech_exp}, we obtain
\begin{align*}
\bbe[Q(x^t,x)]
= \tsum_{i=1}^m \bbe[(1+\tau_t)\langle \underline{x}_i^t -x,\nabla f_i(x)\rangle - ((1+\tau_t)-\tfrac{1}{m}) \langle \underline{x}_{i}^{t-1} -x,\nabla f_i(x)\rangle] + \bbe[\mu w(x^t) -\mu w(x)].
\end{align*}
Multiplying both sides of the above inequality by $\theta_t$, and summing up the resulting inequalities from $t =1$ to $k$, we conclude that
\begin{align*}
\tsum_{t=1}^k \theta_t \bbe[Q(x^t,x)] 
&= \tsum_{i=1}^m \tsum_{t=1}^k \bbe[\theta_t(1+\tau_t)\langle \underline{x}_i^t -x,\nabla f_i(x)\rangle - \theta_t((1+\tau_t)-\tfrac{1}{m}) \langle \underline{x}_{i}^{t-1} -x,\nabla f_i(x)\rangle]\\
&\quad + \tsum_{t=1}^k\theta_t\bbe[\mu w(x^t) -\mu w(x)].
\end{align*}
Note that by \eqref{tau_theta} and the fact that $\underline{x}_i^0 = x^0, \ i = 1, \ldots, m$, we have 
\begin{align}
\tsum_{t=1}^k\theta_t &= \tsum_{t=2}^k [\theta_{t}m(1+\tau_{t})-\theta_{t-1}m(1+\tau_{t-1})]+\theta_1 = \theta_km(1+\tau_k)-\theta_1(m(1+\tau_1)-1), \label{sum_theta}\\
\tsum_{t=1}^k [\theta_t(1+\tau_1)&\langle \underline{x}_i^t -x,\nabla f_i(x)\rangle - \theta_t((1+\tau_t)-\tfrac{1}{m}) \langle \underline{x}_{i}^{t-1} -x,\nabla f_i(x)\rangle]\nn\\
&= \theta_k(1+\tau_k)\langle \underline{x}_i^k -x,\nabla f_i(x)\rangle - \theta_1((1+\tau_1)-\tfrac{1}{m}) \langle x^{0} -x,\nabla f_i(x)\rangle, \ i = 1,\ldots,m.\nn
\end{align}
Combining the above three relations and using the convexity of $f_i$, we obtain
\begin{align*}
\tsum_{t=1}^k\theta_t\bbe[Q(x^t,x)]
&\le \theta_k(1+\tau_k)\tsum_{i=1}^m \bbe[f_i(\underline x_i^k) - f_i(x)] - \theta_1(m(1+\tau_1)-1) \langle x^{0} -x,\nabla f(x)\rangle\\
&\quad  + \tsum_{t=1}^k\theta_t\bbe[\mu w(x^t) -\mu w(x)],
\end{align*}
which in view of \eqref{sum_theta} implies \eqref{rel_Q}. 
\end{proof}


\vgap
\subsection{Convergence analysis of RGEM for deterministic finite-sum optimization}\label{sec_rws}
We now prove the main convergence properties for RGEM to solve \eqref{cp}. 
Observe that RGEM starts with $y^0= \0b$ and only updates the corresponding $i_t$-block of $(\underline{x}_i^t, y_i^t)$, $i=1,\ldots,m$, according to \eqref{def_underlx} and \eqref{def_yt1}, respectively. Therefore, for $y^t$ generated by RGEM, we have 
\beq\label{yt_relation}
y_i^t =
\begin{cases}
	\0b, & \text{if the $i$-th block has never been updated for the first $t$ iterations,}\\
	\nabla f_i(\underline x_i^t), &\text{o.w.}
\end{cases}
\eeq 
Throughout this subsection, we assume that there exists $\sigma_0\ge 0$ which is the upper bound of the initial gradients, i.e., \eqref{def_sigma} holds.
Proposition~\ref{main_proper} below establishes some general convergence properties of RGEM for solving strongly convex problems. 

\vgap
\begin{proposition}\label{main_proper}
	Let $x^t$ and $\underline x^k$ be defined as in \eqref{def_xt1} and \eqref{def_ergodic_m}, respectively, and $x^*$ be an optimal solution of \eqref{cp}. Under the assumption that there exists $\sigma_0$ satisfying \eqref{def_sigma}, and suppose 
	that $\{\eta_t\}$, $\{\tau_t\}$, and $\{\alpha_t\}$ in RGEM satisfy \eqref{tau_theta} and 
	\begin{align}
	m\theta_{t-1}&=\alpha_t\theta_t, \ \ t\ge2, \label{alpha_theta}\\
	\theta_t\eta_t&\le \theta_{t-1}(\mu+\eta_{t-1}), \ \ t \ge2, \label{eta_theta}\\
		2\alpha_tL_i&\le m\tau_{t-1}\eta_t, \ \ i=1,\dots,m; \ t\ge2, \label{alpha_tau_eta_rws}\\
		4L_{i}&\le \tau_k(\mu+\eta_k),\ \ i=1,\dots,m, \label{L_tau_eta_rws}
		\end{align}
		for some $\theta_t\ge 0$, $t=1,\dots,k$. Then, for any $k\ge 1$, we have
		\begin{align}\label{main_bnd_rws}
		\bbe[Q(\underline{x}^k,x^*)] 
	    &~\le (\tsum_{t=1}^k\theta_t)^{-1}\tilde \Delta_{0,\sigma_0},\nn\\
	    \bbe[P(x^k,x^*)]&~\le \tfrac{2\tilde \Delta_{0,\sigma_0}}{\theta_k(\mu+\eta_k)},
		\end{align} 
		where
		\beq\label{def_tDS}
		\tilde \Delta_{0,\sigma_0}:= \theta_1(m(1+\tau_1)-1)(\psi(x^0)-\psi^*)+\theta_1\eta_1P(x^0,x^*) +\tsum_{t=1}^k(\tfrac{m-1}{m})^{t-1}\tfrac{2\theta_t\alpha_{t+1}}{m\eta_{t+1}}\sigma_0^2.
		\eeq
\end{proposition}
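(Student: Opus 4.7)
The plan is to mimic the deterministic argument of Theorem~\ref{main_deter_pro}, but carried out in expectation and with the function $Q$ from \eqref{def_Q} as the bridge between what the randomization naturally produces (per-block quantities $f_i(\underline x_i^k)$) and what we actually want to estimate ($\psi(\underline x^k)-\psi^*$). First I would apply Lemma~\ref{opt_condi} to the prox-mapping \eqref{def_xt1}, which gives, for every $x\in X$,
\[
\la x^t-x,\tfrac{1}{m}\tsum_{i=1}^m\tilde y_i^t\ra+\mu w(x^t)-\mu w(x)\le \eta_tP(x^{t-1},x)-(\mu+\eta_t)P(x^t,x)-\eta_t P(x^{t-1},x^t).
\]
Then I would substitute $\tilde y_i^t = y_i^{t-1}+\alpha_t(y_i^{t-1}-y_i^{t-2})$ from \eqref{def_tildeyt} and add and subtract $\la x^t-x,\nabla f(x)\ra$ on the left so as to form $Q(x^t,x)$. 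The resulting inner product $\la x^t-x,\tfrac{1}{m}\tsum_i(\tilde y_i^t-\nabla f_i(x))\ra$ is the dual cross term, and I would split it via the telescoping trick used in the GEM analysis: write
\[
\la x^t-x,y_i^{t-1}-\nabla f_i(x)\ra+\alpha_t\la x^t-x,y_i^{t-1}-y_i^{t-2}\ra
\]
as a telescoping series in $t$ plus a residual $\alpha_t\la x^t-x^{t-1},y_i^{t-1}-y_i^{t-2}\ra$. This is the key algebraic manoeuvre that lets the extrapolation step cancel the dual motion once we aggregate with weights $\theta_t$ satisfying \eqref{alpha_theta}.

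Next I would take the conditional expectation $\bbe_t[\cdot]$ and invoke Lemma~\ref{tech_exp} to rewrite $\bbe_t[y_i^t]$ as $\tfrac{1}{m}\hat y_i^t+(1-\tfrac{1}{m})y_i^{t-1}=\tfrac{1}{m}\nabla f_i(\hat{\underline x}_i^t)+(1-\tfrac{1}{m})y_i^{t-1}$ (using \eqref{yt_relation} to keep track of those $i$ whose block has never yet been selected; these keep $y_i^t=\0b$). This is where the initial-gradient error enters: each block survives unupdated with probability $(1-1/m)$ per iteration, so after $t-1$ rounds its contribution to the dual residual is weighted by $((m-1)/m)^{t-1}$, which is precisely the factor appearing in $\tilde\Delta_{0,\sigma_0}$. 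I would then multiply by $\theta_t$, sum from $t=1$ to $k$, and use \eqref{tau_theta} together with the identity from Lemma~\ref{tech1_Q} to replace $\sum_t\theta_t\bbe[Q(x^t,x^*)]$ by a combination of $\theta_k(1+\tau_k)\sum_i\bbe[f_i(\underline x_i^k)]$, $\sum_t\theta_t\bbe[\mu w(x^t)-\psi(x^*)]$ and the initialization term $\theta_1(m(1+\tau_1)-1)[\la x^0-x^*,\nabla f(x^*)\ra+f(x^*)]$.

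To close the argument I would control the three residual terms collected along the way. The square term $\sum_t\theta_t\eta_t P(x^{t-1},x^t)$ from the prox-mapping inequality is combined, via the inequality $b\la u,v\ra-\tfrac{a}{2}\|v\|^2\le \tfrac{b^2}{2a}\|u\|^2$, with the dual cross residual $\theta_t\alpha_t\la x^t-x^{t-1},y_i^{t-1}-y_i^{t-2}\ra$; using the smoothness of each $f_i$ in \eqref{def_smoothness} to bound $\|\hat y_i^t-y_i^{t-1}\|_*^2\le L_i^2\|\hat{\underline x}_i^t-\underline x_i^{t-1}\|^2$ together with the identity $\hat{\underline x}_i^t-\underline x_i^{t-1}=(x^t-x^{t-1})/(1+\tau_t)\cdot(\text{scaling})$, the two residuals cancel exactly under conditions \eqref{alpha_tau_eta_rws} and \eqref{L_tau_eta_rws}, just as in the deterministic case. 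The boundary term at $t=k$ is absorbed into $-\tfrac{\theta_k(\mu+\eta_k)}{2}P(x^k,x^*)$ by \eqref{L_tau_eta_rws}, which is what produces the second bound in \eqref{main_bnd_rws}. Finally, the residual error originating from $y^0=y^{-1}=\0b$ is controlled using \eqref{def_sigma} and the geometric weight $((m-1)/m)^{t-1}$ identified above, yielding the $\sum_t(\tfrac{m-1}{m})^{t-1}\tfrac{2\theta_t\alpha_{t+1}}{m\eta_{t+1}}\sigma_0^2$ contribution to $\tilde\Delta_{0,\sigma_0}$.

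The main obstacle I expect is precisely this last piece: tracking in expectation how the stale zero-gradients $y_i^t=\0b$ for not-yet-activated blocks propagate through the extrapolation step \eqref{def_tildeyt} and interact with the telescoping, and showing that with the particular weights $\theta_t$ the resulting variance-like term is bounded by the claimed geometric series rather than by something that grows with $k$. Everything else is a careful but standard adaptation of the GEM analysis with the $Q$-function replacing the usual $\psi$-gap until the very last step, where \eqref{psi_Q} and convexity of each $f_i$ convert $\tsum_i\bbe[f_i(\underline x_i^k)]$ into a statement about $\bbe[\psi(\underline x^k)]$.
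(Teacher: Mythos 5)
Your plan follows the paper's proof essentially step for step: Lemma~\ref{opt_condi} applied to the prox step \eqref{def_xt1}, the smoothness/convexity recursion at the auxiliary points $\hat{\underline x}_i^t$, conditional expectation via Lemma~\ref{tech_exp}, telescoping with weights $\theta_t$ under \eqref{alpha_theta}--\eqref{tau_theta}, Young's inequality together with \eqref{alpha_tau_eta_rws} and \eqref{L_tau_eta_rws} to eliminate the cross terms, the $(\tfrac{m-1}{m})^{t-1}\sigma_0^2$ bound on never-updated blocks via \eqref{def_sigma}, and Lemma~\ref{tech1_Q} to pass to $Q$ and conclude both bounds. The one detail to repair in execution is the absorption of the dual cross term: the paper retains the co-coercivity term $-\tfrac{\tau_t}{2mL_i}\|\nabla f_i(\hat{\underline x}_i^t)-\nabla f_i(\underline x_i^{t-1})\|_*^2$ and splits $\|y_{i_{t-1}}^{t-1}-y_{i_{t-1}}^{t-2}\|_*^2$ via $\|a+b\|^2\le 2\|a\|^2+2\|b\|^2$ into a genuine gradient difference (absorbed under \eqref{alpha_tau_eta_rws}) plus the stale-gradient error, rather than relying on the identity $\hat{\underline x}_i^t-\underline x_i^{t-1}=(x^t-x^{t-1})/(1+\tau_t)$ you wrote, which does not hold since $\underline x_i^{t-1}\ne x^{t-1}$ in general --- a local fix, not a change of approach.
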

\begin{proof}
	In view of the definition of $x^t$ in \eqnok{def_xt1} and Lemma~\ref{opt_condi}, we have,
	\beq\label{primal_opt_s}
	\langle x^t-x, \tfrac{1}{m}\tsum_{i=1}^m\tilde y^t_i\rangle+\mu w(x^t)-\mu w(x) \le \eta_tP(x^{t-1},x)-(\mu+\eta_t)P(x^t,x)-\eta_tP(x^{t-1},x^t).
	\eeq
	Moreover, using the definition of $\psi$ in \eqref{cp}, the convexity of $f_i$, and the fact that $\hat y_i^t =\nabla f_i(\hat{\underline x}_i^t)$ (see \eqref{def_hyt} with $y^t$ defined in \eqref{def_yt1}), we obtain
	\begin{align}
	\tfrac{1+\tau_t}{m}\tsum_{i=1}^mf_i(\hat{\underline x}_i^t)+\mu w(x^t)-\psi(x)
	&\le \tfrac{1+\tau_t}{m}\tsum_{i=1}^mf_i(\hat{\underline x}_i^t)+\mu w(x^t)-\mu w(x)-\tfrac{1}{m}\tsum_{i=1}^m[f_i(\hat{\underline x}_i^t)+\la \hat y_i^t,x-\hat{\underline x}_i^t\ra]\nn\\
	&= \tfrac{\tau_t}{m}\tsum_{i=1}^m[f_i(\hat{\underline x}_i^t)+\la \hat y_i^t,\underline x_i^{t-1}-\hat{\underline x}_i^t\ra]+\mu w(x^t)-\mu w(x)-\tfrac{1}{m}\tsum_{i=1}^m\la \hat y_i^t,x-x^t\ra\nn\\
	&\le -\tfrac{\tau_t}{2m}\tsum_{i=1}^m\tfrac{1}{L_i}\|\nabla f_i(\hat{\underline x}_i^t)-\nabla f_i(\underline x_i^{t-1})\|_*^2
	+\tfrac{\tau_t}{m}\tsum_{i=1}^mf_i(\underline x_i^{t-1})\nn\\
	&\quad +\mu w(x^t)-\mu w(x)-\tfrac{1}{m}\tsum_{i=1}^m\la \hat y_i^t,x-x^t\ra\nn\\
	&\le -\tfrac{\tau_t}{2m}\tsum_{i=1}^m\tfrac{1}{L_i}\|\nabla f_i(\hat{\underline x}_i^t)-\nabla f_i(\underline x_i^{t-1})\|_*^2 +\tfrac{\tau_t}{m}\tsum_{i=1}^mf_i(\underline x_i^{t-1})\nn\\
	&\quad + \la x^t-x, \tfrac{1}{m} \tsum_{i=1}^m[\hat y_i^t-y_i^{t-1}-\alpha_t(y_i^{t-1}-y_i^{t-2})]\ra\nn\\ 
	&\quad+\eta_tP(x^{t-1},x)-(\mu+\eta_t)P(x^t,x)-\eta_tP(x^{t-1},x^t),\label{rec3}
	\end{align}
	where the first equality follows from the definition of $\hat {\underline x}_i^t$ in \eqref{def_hxt}, the second inequality follows from the smoothness of $f_i$ (see Theorem 2.1.5 in \cite{Nest04}) and \eqref{def_hyt}, and the last inequality follows from \eqref{primal_opt_s} and the definition of $\tilde y^t$ in \eqref{def_tildeyt}.
	Therefore, taking expectation on both sides of the above relation over $\{i_1,\dots,i_k\}$, and using Lemma~\ref{tech_exp}, we have
	\begin{align*}
	\bbe[(1+\tau_t)\tsum_{i=1}^mf_i(\underline x_i^t)+\mu w(x^t)-\psi(x)]
	& \le \bbe[-\tfrac{\tau_t}{2L_{i_t}}\|\nabla f_{i_t}(\underline x_{i_t}^{t})-\nabla f_{i_t}(\underline x_{i_t}^{t-1})\|_*^2 +\tfrac{1}{m}\tsum_{i=1}^m(m(1+\tau_t)-1)f_i(\underline{x}_i^{t-1})]\\
	&\quad +\bbe\{\langle x^t-x, \tfrac{1}{m}\tsum_{i=1}^m[m(y_i^t-y_i^{t-1})-\alpha_t(y_i^{t-1}-y_i^{t-2})]\rangle\}\\
	&\quad + \bbe[\eta_tP(x^{t-1},x)-(\mu+\eta_t)P(x^t,x)-\eta_tP(x^{t-1},x^t)].
	\end{align*}
	Multiplying both sides of the above inequality by $\theta_t$, and summing up the resulting inequalities from $t=1$ to $k$, we obtain 
	\begin{align}\label{rec1}
	\tsum_{t=1}^k\tsum_{i=1}^m\bbe[\theta_t(1+\tau_t)f_i(\underline x_i^t)]&+\tsum_{t=1}^k\theta_t\bbe[\mu w(x^t)-\psi(x)]\nn\\
	&\le \tsum_{t=1}^k\theta_t\bbe\left[-\tfrac{\tau_t}{2L_{i_t}}\|\nabla f_{i_t}(\underline x_{i_t}^{t})-\nabla f_{i_t}(\underline x_{i_t}^{t-1})\|_*^2+\tsum_{i=1}^m((1+\tau_t)-\tfrac{1}{m})f_i(\underline{x}_i^{t-1})\right]\nn\\
	&\quad + \tsum_{t=1}^k\tsum_{i=1}^m\theta_t\bbe[\langle x^t-x, y_i^t-y_i^{t-1}-\tfrac{\alpha_t}{m}(y_i^{t-1}-y_i^{t-2})\rangle]\nn\\
	&\quad + \tsum_{t=1}^k\theta_t\bbe[\eta_tP(x^{t-1},x)-(\mu+\eta_t)P(x^t,x)-\eta_tP(x^{t-1},x^t)].
	\end{align}
	Now by \eqref{alpha_theta}, and the facts that $y^{-1}=y^0$ and that we only update one block of $y^t$ (see \eqref{def_yt1}), we have
	\begin{align*}
	&\tsum_{t=1}^k\tsum_{i=1}^m\theta_t\bbe[\langle x^t-x, y_i^t-y_i^{t-1}-\tfrac{\alpha_t}{m}(y_i^{t-1}-y_i^{t-2})\rangle]\\
	&~ = \tsum_{t=1}^k\bbe[\theta_t\la x^t-x, y_{i_t}^t-y_{i_t}^{t-1}\ra-\tfrac{\theta_t\alpha_t}{m}\la x^{t-1}-x, y_{i_{t-1}}^{t-1}-y_{i_{t-1}}^{t-2}\ra]
	-\tsum_{t=2}^k\tfrac{\theta_t\alpha_t}{m}\bbe[\la x^t-x^{t-1},y_{i_{t-1}}^{t-1}-y_{i_{t-1}}^{t-2}\ra]\\
	&~ \varstackrel{\eqref{alpha_theta}}{=} \theta_k\bbe[\la x^k-x, y_{i_k}^k-y_{i_k}^{k-1}\ra  -\tsum_{t=2}^k\tfrac{\theta_t\alpha_t}{m}\bbe[\la x^t-x^{t-1},y_{i_{t-1}}^{t-1}-y_{i_{t-1}}^{t-2}\ra].
	\end{align*}
	Moreover, in view of \eqref{eta_theta}, \eqref{tau_theta}, and the fact that $\underline x_i^0=x^0,\ i=1,\ldots, m$, we obtain
	\begin{align*}
	\tsum_{t=1}^k \theta_t[\eta_tP(x^{t-1},x)-(\mu+\eta_t)P(x^t,x)]
	&\varstackrel{\eqref{eta_theta}}{\le} \theta_1\eta_1P(x^0,x)-\theta_k(\mu+\eta_k)P(x^k,x),\\
	\tsum_{t=1}^k\tsum_{i=1}^m \bbe[\theta_t(1+\tau_t)f_i(\underline x_i^t)-\theta_t((1+\tau_t)-\tfrac{1}{m})f_i(\underline{x}_i^{t-1})]
	&\varstackrel{\eqref{tau_theta}}{=}\tsum_{i=1}^m \bbe[\theta_k(1+\tau_k)f_i(\underline x_i^k)]- \theta_1(m(1+\tau_1)-1)f(x^0)
	\end{align*}
	which together with \eqref{rec1} and \eqref{yt_relation}
	imply that
	\begin{align}\label{rec2}
	\theta_k(1+\tau_k)&\tsum_{i=1}^m\bbe[f_i(\underline x_i^k)]+\tsum_{t=1}^k\theta_t\bbe[\mu w(x^t)-\psi(x)] +\tfrac{\theta_k(\mu+\eta_k)}{2}\bbe[P(x^k,x)]\nn\\
	&\le \theta_1(m(1+\tau_1)-1)f(x^0)+\theta_1\eta_1P(x^0,x)\nn\\
	&\quad +\tsum_{t=2}^k\bbe\left[-\tfrac{\theta_t\alpha_t}{m}\langle x^t-x^{t-1}, y_{i_{t-1}}^{t-1}-y_{i_{t-1}}^{t-2}\rangle-\theta_t\eta_tP(x^{t-1},x^t)-\tfrac{\theta_{t-1}\tau_{t-1}}{2L_{i_{t-1}}}\|y_{i_{t-1}}^{t-1}-\nabla f_{i_{t-1}}(\underline x_{i_{t-1}}^{t-2})\|_*^2\right]\nn\\
	&\quad + \theta_k\bbe\left[\langle x^k-x,y_{i_k}^k-y_{i_k}^{k-1}\rangle-\tfrac{(\mu+\eta_k)}{2}P(x^k,x)-\tfrac{\tau_{k}}{2L_{i_{k}}}\|y_{i_{k}}^{k}-\nabla f_{i_{k}}(\underline x_{i_{k}}^{k-1})\|_*^2\right].
	\end{align}
	By the strong convexity of $P(\cdot,\cdot)$ in \eqref{P_strong}, the simple relations that $b\langle u,v\rangle-a\|v\|^2/2\le b^2\|u\|^2/(2a), \forall a>0$ and $\|a+b\|^2\le 2\|a\|^2 + 2\|b\|^2$, and the conditions in \eqnok{alpha_tau_eta_rws} and \eqnok{L_tau_eta_rws}, we have
	\begin{align*}
	\tsum_{t=2}^k&\left[-\tfrac{\theta_t\alpha_t}{m}\langle x^t-x^{t-1}, y_{i_{t-1}}^{t-1}-y_{i_{t-1}}^{t-2}\rangle-\theta_t\eta_tP(x^{t-1},x^t)-\tfrac{\theta_{t-1}\tau_{t-1}}{2L_{i_{t-1}}}\|y_{i_{t-1}}^{t-1}-\nabla f_{i_{t-1}}(\underline x_{i_{t-1}}^{t-2})\|_*^2\right]\nn\\
	&\varstackrel{\eqref{P_strong}}{\le}\tsum_{t=2}^k\left[-\tfrac{\theta_t\alpha_t}{m}\langle x^t-x^{t-1}, y_{i_{t-1}}^{t-1}-y_{i_{t-1}}^{t-2}\rangle-\tfrac{\theta_t\eta_t}{2}\|x^{t-1}-x^t\|^2-\tfrac{\theta_{t-1}\tau_{t-1}}{2L_{i_{t-1}}}\|y_{i_{t-1}}^{t-1}-\nabla f_{i_{t-1}}(\underline x_{i_{t-1}}^{t-2})\|_*^2\right]\\
	&\le \tsum_{t=2}^k\left[\tfrac{\theta_{t-1}\alpha_t}{2m\eta_t}\|y_{i_{t-1}}^{t-1}-y_{i_{t-1}}^{t-2}\|_*^2-\tfrac{\theta_{t-1}\tau_{t-1}}{2L_{i_{t-1}}}\|y_{i_{t-1}}^{t-1}-\nabla f_{i_{t-1}}(\underline x_{i_{t-1}}^{t-2})\|_*^2\right]\nn\\
	&\le
	\tsum_{t=2}^k\left[\left(\tfrac{\theta_{t-1}\alpha_t}{m\eta_t}-\tfrac{\theta_{t-1}\tau_{t-1}}{2L_{i_{t-1}}}\right)\|y_{i_{t-1}}^{t-1}-\nabla f_{i_{t-1}}(\underline x_{i_{t-1}}^{t-2})\|_*^2 +\tfrac{\theta_{t-1}\alpha_t}{m\eta_t}\|\nabla f_{i_{t-1}}(\underline x_{i_{t-1}}^{t-2})-y_{i_{t-1}}^{t-2}\|_*^2\right]\nn\\
	&\varstackrel{\eqref{alpha_tau_eta_rws}}{\le }\tsum_{t=2}^k\tfrac{\theta_{t-1}\alpha_t}{m\eta_t}\left[\|\nabla f_{i_{t-1}}(\underline x_{i_{t-1}}^{t-2})-y_{i_{t-1}}^{t-2}\|_*^2\right],
	\end{align*}
and similarly,
	\begin{align*}
	\theta_k&\left[\langle x^k-x,y_{i_k}^k-y_{i_k}^{k-1}\rangle-\tfrac{(\mu+\eta_k)}{2}P(x^k,x)-\tfrac{\tau_{k}}{2L_{i_{k}}}\|y_{i_{k}}^{k}-\nabla f_{i_{k}}(\underline x_{i_{k}}^{k-1})\|_*^2\right]\\
	&\le \tfrac{2\theta_k}{\mu+\eta_k}\left[\|\nabla f_{i_{k}}(\underline x_{i_{k}}^{k-1})-y_{i_{k}}^{k-1}\|_*^2\right]
	\le \tfrac{2\theta_k\alpha_{k+1}}{m\eta_{k+1}}\left[\|\nabla f_{i_{k}}(\underline x_{i_{k}}^{k-1})-y_{i_{k}}^{k-1}\|_*^2\right],
	\end{align*}
	where the last inequality follows from the fact that $m\eta_{k+1}\le \alpha_{k+1}(\mu+\eta_k)$ (induced from \eqref{alpha_theta} and \eqref{eta_theta}).
	Therefore, combing the above three relations, 
 we conclude that
	\begin{align}
	\theta_k(1+\tau_k)&\tsum_{i=1}^m\bbe[f_i(\underline x_i^k)]+\tsum_{t=1}^k\theta_t\bbe[\mu w(x^t)-\psi(x)] +\tfrac{\theta_k(\mu+\eta_k)}{2}\bbe[P(x^k,x)]\nn\\
	&\le \theta_1(m(1+\tau_1)-1)f(x^0)+\theta_1\eta_1P(x^0,x) +\tsum_{t=1}^{k}\tfrac{2\theta_t\alpha_{t+1}}{m\eta_{t+1}}\bbe[\|\nabla f_{i_{t}}(\underline x_{i_{t}}^{t-1})-y_{i_{t}}^{t-1}\|_*^2].\label{rws1}
	\end{align}
	We now provide a bound on $\bbe[\|\nabla f_{i_{t}}(\underline x_{i_{t}}^{t-1})-y_{i_{t}}^{t-1}\|_*^2]$. In view of \eqref{yt_relation}, we have
	\[
	\|\nabla f_{i_{t}}(\underline x_{i_{t}}^{t-1})-y_{i_{t}}^{t-1}\|_*^2=
	\begin{cases}
	\|\nabla f_{i_{t}}(\underline x_{i_{t}}^{t-1})\|_*^2, &\text{if the $i_t$-th block has never been updated until iteration $t$;}\\
	0, &\text{o.w.}
	\end{cases}
	\]
	Let us denote event \({\cal B}_{i_t}:=\{\text{the $i_t$-th block has never been updated until iteration $t$}\}\), for all $t=1,\ldots,k$, we have 
	\[
	\bbe[\|\nabla f_{i_{t}}(\underline x_{i_{t}}^{t-1})-y_{i_{t}}^{t-1}\|_*^2]=
	\bbe[\|\nabla f_{i_{t}}(\underline x_{i_{t}}^{t-1})\|_*^2|{\cal B}_{i_t}]\Prob\{{\cal B}_{i_t}\}\le \left(\tfrac{m-1}{m}\right)^{t-1}\sigma_0^2,
	\] 
	where the last inequality follows from the definitions of ${\cal B}_{i_t}$, $\underline x_i^t$ in \eqref{def_underlx} and $\sigma_0^2$ in \eqref{def_sigma}. 
	Fixing $x = x^*$, and using the above result in \eqref{rws1}, we then conclude from \eqref{rws1} and Lemma~\ref{tech1_Q} that 
	\begin{align*}
	0\le \tsum_{t=1}^k\theta_t\bbe[Q(x^t,x^*)] 
	&~\le \theta_1(m(1+\tau_1)-1)[f(x^0) - \langle x^0-x^*, \nabla f(x^*)\rangle-f(x^*)]\\
	&\quad + \theta_1\eta_1P(x^0,x^*) +\tsum_{t=1}^k(\tfrac{m-1}{m})^{t-1}\tfrac{2\theta_t\alpha_{t+1}}{m\eta_{t+1}}\sigma_0^2 - \tfrac{\theta_k(\mu+\eta_k)}{2}\bbe[P(x^k,x^*)],
	\end{align*}
	which, in view of the relation $-\langle x^0-x^*,\nabla f(x^*)\rangle \le \langle x^0- x^*, \mu w'(x^*)\rangle \le \mu w(x^0)- \mu w(x^*)$ and the convexity of $Q(\cdot, x^*)$, implies the first result in \eqref{main_bnd_rws}.
	Moreover, we can also conclude from the above inequality that
	\begin{align*}
	\tfrac{\theta_k(\mu+\eta_k)}{2}\bbe[P(x^k,x^*)] 
	&~\le \theta_1(m(1+\tau_1)-1)[\psi(x^0)-\psi(x^*)] + \theta_1\eta_1P(x^0,x^*)+\tsum_{t=1}^k(\tfrac{m-1}{m})^{t-1}\tfrac{2\theta_t\alpha_{t+1}}{m\eta_{t+1}}\sigma_0^2,
	\end{align*}
	from which the second result in \eqref{main_bnd_rws} follows.
\end{proof}
 
\vgap

With the help of Proposition~\ref{main_proper}, we are now ready to prove Theorem~\ref{main_ran_sc}, which establishes the convergence properties of RGEM.
In particular, Theorem~\ref{main_ran_sc} shows that 
RGEM can achieve the optimal convergence rate as 
${\cal O}\left\{\left(m+\sqrt{m\hat L/\mu}\right)\log 1/\epsilon\right\}$ for strongly convex problems. 
\vgap
\noindent{\bf Proof of Theorem~\ref{main_ran_sc}.}
	Letting $\theta_t = \alpha^{-t},\ t=1,\dots,k$, we can easily check that parameter setting in \eqnok{constant_step_s_ran} with $\alpha$ defined in \eqref{def_nalpha} 
	satisfies conditions \eqref{tau_theta} and \eqnok{alpha_theta}-\eqnok{L_tau_eta_rws} stated in Proposition~\ref{main_proper}. 
	It then follows from \eqref{constant_step_s_ran} and \eqref{main_bnd_rws} that
	\begin{align*}
	\bbe[Q(\underline{x}^k,x^*)]
&\le \tfrac{\alpha^k}{1-\alpha^k}\left[\mu P(x^0,x^*) +\psi(x^0)-\psi^* + \tfrac{2m(1-\alpha)^2\sigma_0^2}{(m-1)\mu}\tsum_{t=1}^k\left(\tfrac{m-1}{m\alpha}\right)^{t}\right],\nn\\
\bbe[P(x^k,x^*)]
&\le2\alpha^k \left[P(x^0,x^*) +\tfrac{\psi(x^0)-\psi^*}{\mu}+\tfrac{2m(1-\alpha)^2\sigma_0^2}{(m-1)\mu^2}\tsum_{t=1}^k\left(\tfrac{m-1}{m\alpha}\right)^{t}\right],
	\ \forall k\ge 1.
	\end{align*}
	Also observe that $\alpha\ge \tfrac{2m-1}{2m}$, we then have
	\[
	\tsum_{t=1}^k\left(\tfrac{m-1}{m\alpha}\right)^{t}\le \tsum_{t=1}^k\left(\tfrac{2(m-1)}{2m-1}\right)^{t}  \le 2(m-1).
	\]
Combining the above three relations and the fact that $m(1-\alpha)\le 1/2$, we have
\begin{align}\label{rel2}
\bbe[Q(\underline x^k,x^*)]&\le \tfrac{\alpha^k}{1-\alpha^k}\Delta_{0,\sigma_0},\nn\\
\bbe[P(x^k,x^*)]&\le 2\alpha^k\Delta_{0,\sigma_0}/\mu, \ \forall k\ge 1,
\end{align}
where $\Delta_{0,\sigma_0}$ is defined in \eqref{def_DeltaS}.
	The second relation immediately implies our bound in \eqref{ran_bnd_s1_rws}. Moreover, by the strong convexity of $P(\cdot,\cdot)$ in \eqref{P_strong} and \eqref{ran_bnd_s1_rws}, we have 
	\begin{align*}
	\tfrac{L_f}{2}\bbe[\|\underline x^k-x^*\|^2]
	&~\le \tfrac{L_f}{2}(\tsum_{t=1}^k\theta_t)^{-1}\tsum_{t=1}^k\theta_t \bbe[\|x^t-x^*\|^2]
	\stackrel{\eqref{P_strong}}{\le} L_f \tfrac{(1-\alpha)\alpha^k}{1-\alpha^{k}}\tsum_{t=1}^k \alpha^{-t} \bbe[P(x^t,x^*)]\\
	&~\varstackrel{\eqref{ran_bnd_s1_rws}}{\le} \tfrac{L_f(1-\alpha)\alpha^k}{1-\alpha^{k}}\tsum_{t=1}^k 
	\tfrac{2 \Delta_{0,\sigma_0}}{\mu}
	= \tfrac{2L_f(1-\alpha) \Delta_{0,\sigma_0} k\alpha^{k}}{\mu(1-\alpha^k)}.
	\end{align*}
	Combining the above relation with the first inequality in \eqref{rel2} and \eqref{psi_Q}, we obtain
	\begin{align*}
	\bbe[\psi(\underline{x}^k)-\psi(x^*)]\stackrel{\eqref{psi_Q}}{\le} \bbe[Q(\underline{x}^k,x^*)] + \tfrac{L_f}{2}\bbe[\|\underline{x}^k- x^*\|^2] \le \left(1+\tfrac{2L_f(1-\alpha)}{\mu}k\right)\tfrac{\Delta_{0,\sigma_0}\alpha^{k}}{1-\alpha^k}.  
	\end{align*}
	Observing that 
	\begin{align*}
	\tfrac{1}{1-\alpha}&\le 2\max\{m,\hat L/\mu\},\nn\\
(k+1)\tfrac{\alpha^{k}(1-\alpha)}{1-\alpha^k}
	&= \left(\tsum_{t=1}^{k}\tfrac{\alpha^{t}}{\alpha^t}+1\right)\tfrac{\alpha^{k}(1-\alpha)}{1-\alpha^k}	
	\le \left(\tsum_{t=1}^{k}\tfrac{\alpha^t}{\alpha^{3t/2}}+1\right)\tfrac{\alpha^{k}(1-\alpha)}{1-\alpha^k}\nn\\
	&\le \tfrac{1-\alpha^{k/2}}{\alpha^{k/2}(1-\alpha^{1/2})}\tfrac{\alpha^{k}(1-\alpha)}{1-\alpha^k} +\alpha^{k}
	\le 2\alpha^{k/2} + \alpha^k \le 3\alpha^{k/2},
\end{align*}
	we have 
	\begin{align}
	\bbe[\psi(\underline{x}^k)-\psi(x^*)]
	&\le 2\max\left\{m,\tfrac{\hat L}{\mu}\right\}\tfrac{\Delta_{0,\sigma_0} (k+1)\alpha^{k}(1-\alpha)}{1-\alpha^k}
	\le 6\max\left\{m,\tfrac{\hat L}{\mu}\right\}\Delta_{0,\sigma_0}\alpha^{k/2}. \nn
	\end{align}

\vgap

\subsection{Convergence analysis of RGEM for stochastic finite-sum optimization}\label{sec_sto}
Our goal in this section is to establish the convergence properties of RGEM for solving stochastic finite-sum optimization problems in \eqref{sp}.
For notation convenience, we use $\bbe_{[i_k]}$ for taking expectation over $\{i_1,\ldots,i_k\}$, $\bbe_{\xi}$ for expectations over $\{\xi^1,\ldots,\xi^k\}$, respectively, we use $\bbe$ to denote the expectations over all random variables. 

Note that the parameter $\{B_t\}$ in Algorithm~\ref{alg_rpaged_sto} denotes the batch size used to compute $y_{i_t}^t$ in \eqref{def_yt1_sto}. Since we now assume that $\|\cdot\|$ is associated with a certain inner product, it can be easily seen from \eqref{def_yt1_sto}, and the two assumptions we have for the stochastic gradients computed by $\SO$ oracle, i.e.,  \eqref{assmp_unbias} and \eqref{assump_var}, that
\beq\label{batch_g}
\bbe_{\xi}[y_{i_t}^t]= \nabla f_{i_t}(\underline{x}_{i_t}^t) \ \mbox{and} \ 
\bbe_{\xi}[\|y_{i_t}^t-\nabla f_{i_t}(\underline{x}_{i_t}^t)\|_*^2] \le \tfrac{\sigma^2}{B_t}, \ \forall i_t, t = 1,\ldots,k,
\eeq
and hence $y_{i_t}^t$ is an unbiased estimator of $\nabla f_{i_t}(\underline{x}_{i_t}^t)$. Moreover, for $y^t$ generated by Algorithm~\ref{alg_rpaged_sto}, we can see that
\beq\label{sto_yt}
y_i^t=
\begin{cases}
	\0b, & \text{if the $i$-th block has never been updated for the first $t$ iterations;}\\
	\tfrac{1}{B_l}\tsum_{j=1}^{B_l}G_i(\underline x_i^l,\xi_{i,j}^l), &\text{if the latest update happened at $l$-th iteration, for $1\le l\le t$.}
\end{cases}
\eeq
We first establish some general convergence properties for Algorithm~\ref{alg_rpaged_sto}.

\vgap
\begin{proposition}\label{main_sto_rgem}
Let $x^t$ and $\underline x^k$ be defined as in \eqref{def_xt1} and \eqref{def_ergodic_m}, respectively, and $x^*$ be an optimal solution of \eqref{sp}. Suppose that $\sigma_0$ and $\sigma$ are defined in \eqref{def_sigma} and \eqref{assump_var}, respectively, and $\{\eta_t\}$, $\{\tau_t\}$, and $\{\alpha_t\}$ in Algorithm~\ref{alg_rpaged_sto} satisfy \eqref{tau_theta}, \eqref{alpha_theta}， \eqref{eta_theta}, and \eqref{L_tau_eta_rws} for some $\theta_t\ge 0$, $t=1,\dots,k$. Moreover, if 
\beq\label{alpha_tau_eta_sto}
3\alpha_tL_i\le m\tau_{t-1}\eta_t, \ i=1,\ldots,m;t\ge2,
\eeq
then for any $k\ge 1$, we have
\begin{align}\label{main_bnd_sto}
\bbe[Q(\underline{x}^k,x^*)] 
	    &\le (\tsum_{t=1}^k\theta_t)^{-1}\tilde\Delta_{0,\sigma_0,\sigma},\nn\\
\bbe[P(x^k,x^*)]
        &\le \tfrac{2\tilde \Delta_{0,\sigma_0,\sigma}}{\theta_k(\mu+\eta_k)},
\end{align}
where 
\beq\label{def_tDSS}
\tilde \Delta_{0,\sigma_0,\sigma}:= \tilde\Delta_{0,\sigma_0} + \tsum_{t=2}^k \tfrac{3\theta_{t-1}\alpha_t\sigma^2}{2m\eta_tB_{t-1}} + \tsum_{t=1}^k\tfrac{2\theta_t\alpha_{t+1}}{m^2\eta_{t+1}}\tsum_{l=1}^{t-1}(\tfrac{m-1}{m})^{t-1-l}\tfrac{\sigma^2}{B_l},
\eeq
with $\tilde \Delta_{0,\sigma_0}$ defined in \eqref{def_tDS}.
\end{proposition}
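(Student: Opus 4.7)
The plan is to mirror the proof of Proposition~\ref{main_proper}, inserting stochastic error control at the key steps. I would begin by applying Lemma~\ref{opt_condi} to the prox-mapping \eqref{def_xt1}, obtaining \eqref{primal_opt_s} verbatim. The departure from the deterministic case occurs when I invoke smoothness of $f_i$ in the chain leading to \eqref{rec3}: since $\hat y_i^t$ is now a mini-batch stochastic estimator satisfying \eqref{batch_g}, I write $\hat y_i^t = \nabla f_i(\hat{\underline x}_i^t) + \delta_i^t$ with $\delta_i^t := \hat y_i^t - \nabla f_i(\hat{\underline x}_i^t)$, apply smoothness to $\nabla f_i(\hat{\underline x}_i^t)$ exactly as before, and collect the cross terms $\langle \delta_i^t, \underline x_i^{t-1} - \hat{\underline x}_i^t\rangle$ and $\langle \delta_i^t, x - \hat{\underline x}_i^t\rangle$. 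Because $x^t$, $\underline x_i^{t-1}$, and $\hat{\underline x}_i^t$ are all measurable with respect to $\{i_1,\ldots,i_{t-1}\}\cup\{\xi^1,\ldots,\xi^{t-1}\}$, conditioning on this history and applying $\bbe_{\xi^t}[\delta_i^t]=\0b$ causes the cross terms to vanish in expectation.

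After multiplying by $\theta_t$, summing over $t$, and applying Lemma~\ref{tech_exp} in the same manner as \eqref{rec1}-\eqref{rec2}, I obtain an analog of \eqref{rec2} whose right-hand side now contains $\bbe[\|y_{i_{t-1}}^{t-1}-y_{i_{t-1}}^{t-2}\|_*^2]$ that must be split to expose the variance. I would use $\|a+b\|_*^2\le (1+\rho)\|a\|_*^2+(1+1/\rho)\|b\|_*^2$ with a carefully chosen $\rho$ (rather than the simple $\rho=1$ used in the deterministic proof) together with $\bbe_{\xi^{t-1}}[\|y_{i_{t-1}}^{t-1}-\nabla f_{i_{t-1}}(\underline x_{i_{t-1}}^{t-1})\|_*^2]\le\sigma^2/B_{t-1}$ to bound the extra noise at the current iteration by $O(\theta_{t-1}\alpha_t \sigma^2/(m\eta_t B_{t-1}))$; this produces the middle summand of $\tilde\Delta_{0,\sigma_0,\sigma}$ in \eqref{def_tDSS}. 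The tightened stepsize condition \eqref{alpha_tau_eta_sto} (with the constant $3$ replacing the $2$ in \eqref{alpha_tau_eta_rws}) is precisely what is needed to absorb the resulting quadratic terms while still leaving a nonnegative coefficient on $\|\nabla f_{i_{t-1}}(\underline x_{i_{t-1}}^{t-2})-y_{i_{t-1}}^{t-2}\|_*^2$ for the subsequent telescoping argument.

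For the final residual $\bbe[\|\nabla f_{i_t}(\underline x_{i_t}^{t-1})-y_{i_t}^{t-1}\|_*^2]$ that survives the telescoping (as in the deterministic case), I would use \eqref{sto_yt} and partition on the last iteration $l\in\{0,1,\ldots,t-1\}$ at which the $i_t$-th block was updated, with $l=0$ meaning never. By the symmetry of the uniform sampling, the probability of the block being last updated at $l\ge 1$ is $\tfrac{1}{m}\bigl(\tfrac{m-1}{m}\bigr)^{t-1-l}$, and that of never being updated is $\bigl(\tfrac{m-1}{m}\bigr)^{t-1}$. On $\{l=0\}$, the squared error is $\|\nabla f_{i_t}(x^0)\|_*^2$, whose average over $i_t$ is bounded by $\sigma_0^2$ via \eqref{def_sigma}; on $\{l\ge 1\}$, it is $\|\nabla f_{i_t}(\underline x_{i_t}^l)-\tfrac{1}{B_l}\tsum_j G_{i_t}(\underline x_{i_t}^l,\xi_{i_t,j}^l)\|_*^2$, whose conditional expectation is at most $\sigma^2/B_l$. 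Weighting by $\tfrac{2\theta_t\alpha_{t+1}}{m\eta_{t+1}}$ and summing in $t$ yields the first and third summands of $\tilde\Delta_{0,\sigma_0,\sigma}$.

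The main technical difficulty is the careful bookkeeping of two independent randomness sources, $\{i_t\}$ and $\{\xi_{i,j}^t\}$, and their interaction with the nested conditional expectations; in particular, I must verify at each step that $y_i^{t-1}, \underline x_i^{t-1}, \hat{\underline x}_i^t, x^t$ are measurable with respect to the information available before $\xi^t$ is drawn so that \eqref{batch_g} and the zero-mean property of $\delta_i^t$ can be legitimately invoked. Once the recursion is assembled, fixing $x=x^*$ and invoking Lemma~\ref{tech1_Q} together with the nonnegativity $Q(\underline x^k,x^*)\ge 0$ and the convexity of $Q(\cdot,x^*)$ yields both bounds in \eqref{main_bnd_sto} in exactly the same fashion as the closing argument of Proposition~\ref{main_proper}.
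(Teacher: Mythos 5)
Your proposal is correct and follows essentially the same route as the paper's proof: unbiasedness of the mini-batch estimator to eliminate the cross terms in expectation, a three-way decomposition of $y_{i_{t-1}}^{t-1}-y_{i_{t-1}}^{t-2}$ into two noise pieces plus a true gradient difference (with the constant $3$ in \eqref{alpha_tau_eta_sto} absorbing the latter), and a partition over the last update time of block $i_t$ to bound the residual $\bbe[\|\nabla f_{i_t}(\underline x_{i_t}^{t-1})-y_{i_t}^{t-1}\|_*^2]$ by the $\sigma_0^2$ and $\sigma^2/B_l$ contributions. The only cosmetic difference is that you phrase the splitting via a two-term Young inequality with a tunable $\rho$, while the paper uses an equally weighted three-term split, which yields the same $\tfrac{3\theta_{t-1}\alpha_t}{2m\eta_t}$ coefficients.
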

\begin{proof}
Observe that in Algorithm~\ref{alg_rpaged_sto} $y^t$ is updated as in \eqref{def_yt1_sto}. Therefore, according to \eqref{def_hyt}, we have 
\[
	\hat y_i^t =\tfrac{1}{B_t}\tsum_{j=1}^{B_t}G_i(\hat{\underline x}_{i}^t,\xi_{i,j}^t), \ i =1,\ldots,m,\ t\ge 1,
\]
which together with the first relation in \eqref{batch_g} imply that $\bbe_{\xi}[\la \hat y_i^t, x-\hat{\underline x}_i^t\ra]=\bbe_{\xi}[\la \nabla f_i(\hat{\underline x}_i^t), x-\hat{\underline x}_i^t\ra]$.
Hence, we can rewrite \eqref{rec3} as
\begin{align*}
	\bbe_{\xi}[\tfrac{1+\tau_t}{m}\tsum_{i=1}^mf_i(\hat{\underline x}_i^t)+\mu w(x^t)-\psi(x)]
	&\le \bbe_{\xi}\left[\tfrac{1+\tau_t}{m}\tsum_{i=1}^mf_i(\hat{\underline x}_i^t)+\mu w(x^t)-\mu w(x)-\tfrac{1}{m}\tsum_{i=1}^m[f_i(\hat{\underline x}_i^t)+\la \nabla f_i(\hat{\underline x}_i^t),x-\hat{\underline x}_i^t\ra]\right]\\
	&= \bbe_{\xi}\left[\tfrac{1+\tau_t}{m}\tsum_{i=1}^mf_i(\hat{\underline x}_i^t)+\mu w(x^t)-\mu w(x)-\tfrac{1}{m}\tsum_{i=1}^m[f_i(\hat{\underline x}_i^t)+\la \hat y_i^t,x-\hat{\underline x}_i^t\ra]\right]\\
	&\le \bbe_{\xi}\big[-\tfrac{\tau_t}{2m}\tsum_{i=1}^m\tfrac{1}{L_i}\|\nabla f_i(\hat{\underline x}_i^t)-\nabla f_i(\underline x_i^{t-1})\|_*^2 +\tfrac{\tau_t}{m}\tsum_{i=1}^mf_i(\underline x_i^{t-1})\nn\\
	&\quad\quad + \la x^t-x, \tfrac{1}{m} \tsum_{i=1}^m[\hat y_i^t-y_i^{t-1}-\alpha_t(y_i^{t-1}-y_i^{t-2})]\ra\nn\\ 
	&\quad\quad+\eta_tP(x^{t-1},x)-(\mu+\eta_t)P(x^t,x)-\eta_tP(x^{t-1},x^t)\big],
	\end{align*}
Following the same procedure as in the proof of Proposition~\ref{main_proper}, we obtain the following similar relation (cf. \eqref{rec2})
\begin{align*}
	\theta_k&(1+\tau_k)\tsum_{i=1}^m\bbe[f_i(\underline x_i^k)]+\tsum_{t=1}^k\theta_t\bbe[\mu w(x^t)-\psi(x)] +\tfrac{\theta_k(\mu+\eta_k)}{2}\bbe[P(x^k,x)]\nn\\
	&\le \theta_1(m(1+\tau_1)-1)f(x^0)+\theta_1\eta_1P(x^0,x)\nn\\
	&~+\tsum_{t=2}^k\bbe\left[-\tfrac{\theta_t\alpha_t}{m}\langle x^t-x^{t-1}, y_{i_{t-1}}^{t-1}-y_{i_{t-1}}^{t-2}\rangle-\theta_t\eta_tP(x^{t-1},x^t)-\tfrac{\theta_{t-1}\tau_{t-1}}{2L_{i_{t-1}}}\|\nabla f_{i_{t-1}}(x_{i_{t-1}}^{t-1})-\nabla f_{i_{t-1}}(\underline x_{i_{t-1}}^{t-2})\|_*^2\right]\nn\\
	&~ + \theta_k\bbe\left[\langle x^k-x,y_{i_k}^k-y_{i_k}^{k-1}\rangle-\tfrac{(\mu+\eta_k)}{2}P(x^k,x)-\tfrac{\tau_{k}}{2L_{i_{k}}}\|\nabla f_{i_k}(x_{i_{k}}^{k})-\nabla f_{i_{k}}(\underline x_{i_{k}}^{k-1})\|_*^2\right].
	\end{align*}
	By the strong convexity of $P(\cdot,\cdot)$ in \eqref{P_strong}, the fact that $b\langle u,v\rangle-a\|v\|^2/2\le b^2\|u\|^2/(2a), \forall a>0$, and the Cauchy-Schwartz inequality, we have, for $t =2,\ldots,k$, 
	\begin{align*}
	&\bbe[-\tfrac{\theta_t\alpha_t}{m}\langle x^t-x^{t-1}, y_{i_{t-1}}^{t-1}-y_{i_{t-1}}^{t-2}\rangle-\theta_t\eta_tP(x^{t-1},x^t)-\tfrac{\theta_{t-1}\tau_{t-1}}{2L_{i_{t-1}}}\|\nabla f_{i_{t-1}}(x_{i_{t-1}}^{t-1})-\nabla f_{i_{t-1}}(\underline x_{i_{t-1}}^{t-2})\|_*^2]\\
	&\varstackrel{\eqref{P_strong}}{\le} \bbe[-\tfrac{\theta_t\alpha_t}{m}\langle x^t-x^{t-1}, y_{i_{t-1}}^{t-1}-\nabla f_{i_{t-1}}(\underline x_{i_{t-1}}^{t-1})+ \nabla f_{i_{t-1}}(\underline x_{i_{t-1}}^{t-1}) - \nabla f_{i_{t-1}}(\underline x_{i_{t-1}}^{t-2})+ \nabla f_{i_{t-1}}(\underline x_{i_{t-1}}^{t-2})-y_{i_{t-1}}^{t-2}\rangle]\\
	&\quad -\bbe\left[\tfrac{\theta_t\eta_t}{2}\|x^{t-1}-x^t\|^2 +\tfrac{\theta_{t-1}\tau_{t-1}}{2L_{i_{t-1}}}\|\nabla f_{i_{t-1}}(x_{i_{t-1}}^{t-1})-\nabla f_{i_{t-1}}(\underline x_{i_{t-1}}^{t-2})\|_*^2\right]\\
	&\le \bbe\left[\left(\tfrac{3\theta_{t-1}\alpha_t}{2m\eta_t}-\tfrac{\theta_{t-1}\tau_{t-1}}{2L_{i_{t-1}}}\right)\|\nabla f_{i_{t-1}}(x_{i_{t-1}}^{t-1})-\nabla f_{i_{t-1}}(\underline x_{i_{t-1}}^{t-2})\|_*^2\right]\\
	&\quad + \tfrac{3\theta_{t-1}\alpha_t}{2m\eta_t}\bbe\left[\|y_{i_{t-1}}^{t-1}-\nabla f_{i_{t-1}}(\underline x_{i_{t-1}}^{t-1})\|_*^2 + \|\nabla f_{i_{t-1}}(\underline x_{i_{t-1}}^{t-2})-y_{i_{t-1}}^{t-2}\|_*^2\right]\\
	&\varstackrel{\eqref{alpha_tau_eta_sto}}{\le}\tfrac{3\theta_{t-1}\alpha_t}{2m\eta_t}\bbe\left[\|y_{i_{t-1}}^{t-1}-\nabla f_{i_{t-1}}(\underline x_{i_{t-1}}^{t-1})\|_*^2 + \|\nabla f_{i_{t-1}}(\underline x_{i_{t-1}}^{t-2})-y_{i_{t-1}}^{t-2}\|_*^2\right].
	\end{align*}
	Similarly, we can also obtain
	\begin{align*}
	&\bbe\left[\langle x^k-x,y_{i_k}^k-y_{i_k}^{k-1}\rangle - \tfrac{(\mu+\eta_k)}{2}P(x^k,x)-\tfrac{\tau_{k}}{2L_{i_{k}}}\|f_{i_k}(x_{i_{k}}^{k})-\nabla f_{i_{k}}(\underline x_{i_{k}}^{k-1})\|_*^2\right]\\
	&\varstackrel{\eqref{batch_g},\eqref{P_strong}}{\le} \bbe\left[\langle x^k-x, \nabla f_{i_k}(\underline x_{i_k}^k) - \nabla f_{i_k}(\underline x_{i_k}^{k-1})+ \nabla f_{i_k}(\underline x_{i_k}^{k-1}) -y_{i_k}^{k-1}\rangle\right]\\
	&\quad - \bbe\left[\tfrac{(\mu+\eta_k)}{4}\|x^k-x\|^2+\tfrac{\tau_{k}}{2L_{i_{k}}}\|f_{i_k}(x_{i_{k}}^{k})-\nabla f_{i_{k}}(\underline x_{i_{k}}^{k-1})\|_*^2\right]\\
	&\le \bbe\left[\left(\tfrac{2}{\mu+\eta_k}-\tfrac{\tau_k}{2L_{i_{k}}}\right)\|\nabla f_{i_k}(x_{i_{k}}^{k})-\nabla f_{i_{k}}(\underline x_{i_{k}}^{k-1})\|_*^2 + \tfrac{2}{\mu+\eta_k}\|\nabla f_{i_k}(\underline x_{i_k}^{k-1}) -y_{i_k}^{k-1}\|_*^2\right] \\
	&\varstackrel{\eqref{L_tau_eta_rws}}{\le}\bbe\left[\tfrac{2}{\mu+\eta_k}\|\nabla f_{i_k}(\underline x_{i_k}^{k-1}) -y_{i_k}^{k-1}\|_*^2\right].
	\end{align*}
	Combining the above three relations, and using the fact that $m\eta_{k+1}\le \alpha_{k+1}(\mu+\eta_k)$ (induced from \eqref{alpha_theta} and \eqref{eta_theta}), we have
	\begin{align*}
	\theta_k&(1+\tau_k)\tsum_{i=1}^m\bbe[f_i(\underline x_i^k)]+\tsum_{t=1}^k\theta_t\bbe[\mu w(x^t)-\psi(x)] +\tfrac{\theta_k(\mu+\eta_k)}{2}\bbe[P(x^k,x)]\nn\\
	&\le \theta_1(m(1+\tau_1)-1)f(x^0)+\theta_1\eta_1P(x^0,x)\\
	&\quad +\tsum_{t=2}^{k} \tfrac{3\theta_{t-1}\alpha_{t}}{2m\eta_{t}}\bbe[\|y_{i_{t-1}}^{t-1}-\nabla f_{i_{t-1}}(\underline{x}_{i_{t-1}}^{t-1})\|_*^2] + \tsum_{t=1}^k\tfrac{2\theta_t\alpha_{t+1}}{m\eta_{t+1}}\bbe[\|\nabla f_{i_{t}}(\underline x_{i_{t}}^{t-1})-y_{i_{t}}^{t-1}\|_*^2].
	\end{align*}
	Moreover, in view of the second relation in \eqref{batch_g}, we have
	\begin{align*}
	\bbe[\|y_{i_{t-1}}^{t-1}-\nabla f_{i_{t-1}}(\underline x_{i_{t-1}}^{t-1})\|_*^2] &\le \tfrac{\sigma^2}{B_{t-1}}, \ \forall t\ge 2.
	\end{align*}
	Let us denote ${\cal E}_{i_t,t}:=\max\{l:i_l = i_t, l<t\}$ with ${\cal E}_{i_t,t}=0$ denoting the event that the $i_t$-th block has never been updated until iteration $t$, 
	we can also conclude that for any $t\ge 1$
	\begin{align*}
	\bbe[\|\nabla f_{i_{t}}(\underline x_{i_{t}}^{t-1})-y_{i_{t}}^{t-1}\|_*^2]
	 &= \tsum_{l=0}^{t-1}\bbe\left[\|\nabla f_{i_{l}}(\underline x_{i_{l}}^{l})-y_{i_{l}}^{l}\|_*^2|\{{\cal E}_{i_{t},t}=l\}\right] \prob\{{\cal E}_{i_{t},t}=l\}\\
	&\le (\tfrac{m-1}{m})^{t-1}\sigma_0^2 + \tsum_{l=1}^{t-1}\tfrac{1}{m}(\tfrac{m-1}{m})^{t-1-l}\tfrac{\sigma^2}{B_l},
	\end{align*}
	where the first term in the inequality corresponds to the case when the $i_t$-block has never been updated for the first $t-1$ iterations, and the second term represents that its latest update for the first $t-1$ iterations happened at the $l$-th iteration.
	Hence, using Lemma~\ref{tech1_Q} and following the same argument as in the proof of Proposition~\ref{main_proper}, we obtain our results in \eqref{main_bnd_sto}.
\end{proof}

\vgap
We are now ready to prove Theorem~\ref{main_ran_sto}, which establishes an optimal complexity bound (up to a logarithmic factor) on the number of calls to the $\SO$ oracle and a linear rate of convergence in terms of the communication complexity for solving problem \eqref{sp}.

\vgap
\noindent{\bf Proof of Theorem~\ref{main_ran_sto}}
Let us set $\theta_t = \alpha^{-t}, \ t = 1,\ldots,k$. It is easy to check that the parameter setting in \eqref{constant_step_s_ran} with $\alpha$ defined in \eqref{def_nalpha} satisfies conditions \eqref{tau_theta}, \eqref{alpha_theta}, \eqref{eta_theta}, \eqref{L_tau_eta_rws}, and \eqref{alpha_tau_eta_sto} as required by Proposition~\ref{main_sto_rgem}.
By \eqref{constant_step_s_ran}, the definition of $B_t$ in \eqref{def_bt}, and the fact that $\alpha \ge \tfrac{2m-1}{2m} > (m-1)/m$, we have
\begin{align*}
\tsum_{t=2}^k \tfrac{3\theta_{t-1}\alpha_t\sigma^2}{2m\eta_tB_{t-1}}
&\le \tsum_{t=2}^k \tfrac{3\sigma^2}{2\mu(1-\alpha)k}
\le \tfrac{3\sigma^2}{2\mu(1-\alpha)},\\
\tsum_{t=1}^k\tfrac{2\theta_t\alpha_{t+1}}{m^2\eta_{t+1}}\tsum_{l=1}^{t-1}(\tfrac{m-1}{m})^{t-1-l}\tfrac{\sigma^2}{B_l}
&\le\tfrac{2\sigma^2}{\alpha\mu m(1-\alpha)k}\tsum_{t=1}^k(\tfrac{m-1}{m\alpha})^{t-1}\tsum_{l=1}^{t-1}(\tfrac{m\alpha}{m-1})^{l}\\
&\le \tfrac{2\sigma^2}{\mu (1-\alpha)m\alpha k}\tsum_{t=1}^k(\tfrac{m-1}{m\alpha})^{t-1}(\tfrac{m\alpha}{m-1})^{t-1}\tfrac{1}{1-(m-1)/(m\alpha)}\\
&\le \tfrac{2\sigma^2}{\mu (1-\alpha)}\tfrac{1}{m\alpha -(m-1)}\le \tfrac{4\sigma^2}{\mu(1-\alpha)}.
\end{align*}
Hence, similar to the proof of Theorem~\ref{main_ran_sc}, using the above relations and \eqref{constant_step_s_ran} in \eqref{main_bnd_sto}, we obtain
\begin{align*}
\bbe[Q(\underline{x}^k,x^*)]
&\le \tfrac{\alpha^k}{1-\alpha^k}\left[\Delta_{0,\sigma_0} + \tfrac{5\sigma^2}{\mu}\right],\\
\bbe[P(x^k,x^*)]
&\le2\alpha^k \left[\Delta_{0,\sigma_0}+ \tfrac{5\sigma^2}{\mu^2}\right], 
\end{align*}
where $\Delta_{0,\sigma_0}$ is defined in \eqref{def_DeltaS}. The second relation implies our results in \eqref{ran_bnd_s1_sto}. Moreover, \eqref{ran_bnd_s2_sto} follows from the same argument as we used in proving Theorem~\ref{main_ran_sc}.

\vgap
\setcounter{equation}{0}
\section{Concluding remarks}
In this paper, we propose a new randomized incremental gradient method, referred to as random gradient extrapolation method, for solving the classes of deterministic finite-sum optimization problems in \eqref{cp} and stochastic finite-sum optimization problems in \eqref{sp}, respectively. 
We demonstrate that without any exact gradient evaluation even at the initial point, this algorithm achieves optimal linear rate of convergence for deterministic strongly convex problems, as well as exhibiting optimal sublinear rate of convergence (up to a logarithmic factor) for stochastic strongly convex problems. 
All these complexity bounds have been established in terms of the total number of gradient computations of component function $f_i$ and the latter complexity bound on the computation of stochastic gradients is in fact asymptotically independent of the number of components $m$.
Moreover, we consider solving finite-sum problems in \eqref{cp} and \eqref{sp} in a distributed network setting with $m$ agents connected to a central server. 
Since each iteration of our proposed algorithm only involves constant number of communication rounds between the server and one randomly selected agent, it achieves linear communication complexity and avoids synchronous delays among agents.
It is worth pointing out that by exploiting the mini-batch technique, the algorithm can also achieve linear communication complexity for solving stochastic finite-sum problems, which is the best-known communication complexity for distributed stochastic optimization problems in the literature.

\bibliographystyle{plain}
\bibliography{glan-bib}
\end{document}